\newcommand{\Ab}{\operatorname{\bf{Ab}}}
\newcommand{\eff}{\operatorname{\bf{eff}}}
\newcommand{\op}{\operatorname{op}\nolimits}
\newcommand{\supp}{\operatorname{supp}\nolimits}
\newcommand{\AR}{\operatorname{AR}\nolimits}
\newcommand{\Ind}{\operatorname{Ind}\nolimits}
\newcommand{\add}{\operatorname{add}\nolimits}
\newcommand{\fp}{\operatorname{fp}\nolimits}
\newcommand{\Hom}{\operatorname{Hom}\nolimits}
\newcommand{\Imm}{\operatorname{Im}\nolimits}
\newcommand{\coker}{\operatorname{Coker}\nolimits}
\newcommand{\Ext}{\operatorname{Ext}\nolimits}
\newcommand{\Ex}{\operatorname{Ex}\nolimits}
\newcommand{\Iso}{\operatorname{Iso}\nolimits}
\newcommand{\Mod}{\operatorname{Mod }\nolimits}
\renewcommand{\mod}{\operatorname{mod}\nolimits}
\newcommand{\rep}{\operatorname{rep}\nolimits}
\newcommand{\ind}{\operatorname{ind}\nolimits}
\newcommand{\defff}{\operatorname{def}\nolimits}
\newcommand{\arr}[1]{\stackrel{sharp1}{\rightarrow}}
\newcommand{\larr}[1]{\stackrel{sharp1}{\leftarrow}}
\newcommand{\ca}{{\mathcal A}}
\newcommand{\cA}{{\mathcal A}}
\newcommand{\cb}{{\mathcal B}}
\newcommand{\cc}{{\mathcal C}}
\newcommand{\cC}{{\mathcal C}}
\newcommand{\cd}{{\mathcal D}}
\newcommand{\cE}{{\mathcal E}}
\newcommand{\ce}{{\mathcal E}}
\newcommand{\cF}{{\mathcal F}}
\newcommand{\cH}{{\mathcal H}}
\newcommand{\ch}{{\mathcal H}}
\newcommand{\ci}{{\mathcal I}}
\newcommand{\cj}{{\mathcal J}}
\newcommand{\cP}{{\mathcal P}}
\newcommand{\cp}{{\mathcal P}}
\renewcommand{\tilde}[1]{\widetilde{sharp1}}
\newcommand{\ul}[1]{\underline{sharp1}}
\newcommand{\ol}[1]{\overline{sharp1}}
\renewcommand{\hat}[1]{\widehat{sharp1}}
\newtheorem{theorem}{Theorem}[section]
\newtheorem{corollary}[theorem]{Corollary}
\newtheorem{lemma}[theorem]{Lemma}
\newtheorem{proposition}[theorem]{Proposition}
\newtheorem{thmx}{Theorem}
\theoremstyle{definition}
\newtheorem{definition}[theorem]{Definition}
\newtheorem{example}[theorem]{Example}
\newtheorem{remark}[theorem]{Remark}
\begin{document}

\title{Exact structures and degeneration of Hall algebras}


\author{Xin Fang}
\address{Xin Fang:\newline
Abteilung Mathematik, Department Mathematik/Informatik, Universit\"at zu K\"oln, 50931, Cologne, Germany}
\email{xinfang.math@gmail.com}
\author{Mikhail Gorsky}
\address{Mikhail Gorsky:\newline
Instute of Algebra and Number Theory, University of Stuttgart, Pfaffenwaldring 57, 70569 Stuttgart, Germany}
\email{mikhail.gorsky@iaz.uni-stuttgart.de}

\maketitle

\begin{abstract}
We study degenerations of the Hall algebras of exact categories induced by degree functions on the set of isomorphism classes of indecomposable objects. We prove that each such degeneration of the Hall algebra $\ch(\ce)$ of an exact category $\ce$ is the Hall algebra of a smaller exact structure $\ce' < \ce$ on the same additive category $\ca.$ When $\ce$ is admissible in the sense of Enomoto, for any $\ce' < \ce$ satisfying suitable finiteness conditions, we prove that $\ch(\ce')$ is a degeneration of $\ch(\ce)$ of this kind.

In the additively finite case, all such degree functions form a simplicial cone whose face lattice reflects properties of the lattice of exact structures. For the categories of representations of Dynkin quivers, we recover degenerations of the negative part of the corresponding quantum group, as well as the associated polyhedral structure studied by Fourier, Reineke and the first author.

Along the way, we give minor improvements to certain results of Enomoto and Br\"ustle-Langford-Hassoun-Roy concerning the classification of exact structures on an additive category. We prove that for each idempotent complete additive category $\ca$, there exists an abelian category whose lattice of Serre subcategories is isomorphic to the lattice of exact structures on $\ca.$ 
We show that every Krull-Schmidt category admits a unique maximal admissible exact structure and that the lattice of smaller exact structures of an admissible exact structure is Boolean.
\end{abstract}

\section{Introduction}

\subsection{Hall algebras and quantum groups}
Hall algebras provide one of the first known examples of additive categorification. They first appeared in works of Steinitz \cite{St} and Hall \cite{Hal} on commutative finite $p$-groups. Later, they reappeared in the work of Ringel \cite{R1} on quantum groups. He introduced the notion of the Hall algebra of an abelian category with finite $\Hom-$ and $\Ext^1$-spaces. As a vector space, it has a basis parameterized by the isomorphism classes of objects in the category. The multiplication captures information about the extensions between objects. One can consider this as an algebra of constructible functions on the moduli stack of objects in the category, with the convolution product given by the Hecke correspondences.

Ringel constructed an isomorphism between the twisted Hall algebra of the category of representations of a Dynkin quiver $Q$ (i.e. a quiver of type {\tt ADE}) over the finite field $\mathbb{F}_q$ and the nilpotent part of the corresponding quantum group, specialized at the square root of $q:$ 
$$U_{\sqrt{q}}^-(\mathfrak{g}(Q)) \overset\sim\to \mathcal{H}_{tw}(\rep_{\mathbb{F}_q}(Q)).$$
Later Green \cite{Gr} generalized this result to an arbitrary valued quiver $Q$ by providing an isomorphism between the nilpotent part of the quantized universal enveloping algebra of the corresponding Kac-Moody algebra and the so-called ``composition'' subalgebra in $\mathcal{H}_{tw}(\rep_{\mathbb{F}_q}(Q))$ generated by the classes of simple objects. Using the Grothendieck group of the category of quiver representations, he introduced an extended version of the Hall algebra which recovers the Borel part of the quantum group. 
 
Lusztig \cite{Lusztig90, Lusztig91} investigated the geometric version of the composition subalgebra in the Hall algebra $\mathcal{H}_{tw}(\rep_{\mathbb{F}_q}(Q)),$ using perverse sheaves on moduli spaces of quiver representations. This is an example of monoidal categorification, where the tensor product in a certain monoidal category gives rise to the multiplication in the algebra. This approach led him to the discovery of the canonical basis in $U_{\sqrt{q}}^-(\mathfrak{g}(Q))$ satisfying very pleasant positivity properties. 

Other interesting examples of Hall algebras are those of categories of coherent sheaves on schemes. They were first considered by Kapranov in \cite{KaprCoh}, where he linked Hall algebras of coherent sheaves on curves to the study of automorphic forms. Since then, Hall algebras of coherent sheaves have been studied intensively and turned out to be related to the geometric Langlands conjecture, Cherednik algebras, knot invariants, etc, see \cite{SchII, SV, Sch18, GN, MS} and references therein.
Hall algebras of other classes of categories appeared in the context of skein modules of Legendrian links \cite{Haiden}, Donaldson-Thomas invariants (see the survey \cite{BrHallDT} and references therein) and cluster algebras \cite{BrScat}.

Hubery \cite{Hu} proved that the algebra defined in the same way as by Ringel, but for an exact category, is also unital and associative. Since any small exact category can be embedded as a full extension-closed subcategory into an abelian category, this might look like a rephrasing of Ringel's theorems. However, the result is deep for several reasons. First, for an arbitrary $\Hom-$ and $\Ext^1-$finite exact category it might be impossible to find an embedding (as a full extension-closed subcategory) into a $\Hom-$ and $\Ext^1-$finite abelian category \cite[Section 2.1]{BG}. Second, the associativity of such a Hall algebra corresponds to the fact that the Waldhausen $S_{\bullet}$-space of an exact category is {\it 2-Segal} in the sense of Dyckerhoff-Kapranov \cite{DK}. Waldhausen spaces first appeared in the studies of higher K-theory, where it is important to go beyond the realm of abelian categories. 

Our work sheds new light on Hubery's result.
Being abelian is a property of an additive category, while an exact category is an additive category endowed with an extra structure. The Hall algebra of an exact category depends not only on the underlying additive category, but on this structure as well. Moreover, since each additive category admits a split exact structure, to each $\Hom-$finite $\mathbb{F}_q-$linear additive category one can associate at least one Hall algebra. We study relations between different Hall algebras associated to different exact structures on the same additive category.

\subsection{Exact structures on an additive category}
An additive category $\cA$ can be endowed with many different exact structures. Some classification results of these exact structures can be found in the papers \cite{DReitenSS, EnochsJenda_v2, Rump11, Crivei12} and the references therein. Recently, two new approaches are proposed to this topic. There is a natural partial order on exact structures on an additive category: one structure is smaller than the other if it has less conflations. Enomoto \cite{Enomoto1} used the functorial approach to classify exact structures in a large class of additive categories in terms of Serre subcategories of their module categories. In particular, he proved that, in some natural generality, exact structures on an \emph{additively finite} category $\ca$ (that is, an additive category having finitely many indecomposable objects up to isomorphism)\footnote{Enomoto calls such $\cA$ addtitive categories \emph{of finite type}, but this term is used in literature for several different classes of categories and so may be a bit ambiguous.} form a Boolean lattice. This lattice can be identified with the lattice of the subsets of the set of all the indecomposable objects that are not projective with respect to the maximal exact structure on $\ca$. Br\"ustle, Hassoun, Langford and Roy investigated relations between changes of exact structures to smaller ones (i.e. having strictly less conflations) and some matrix reduction problems, see  \cite{BHLR} for the details (see also \cite{DReitenSS}). This inspired the name of the \emph{reduction of exact structures} for this procedure. They also gave a partial generalization of Enomoto's theorem by showing that exact structures on an additive category always form a complete bounded lattice. For any additive category, the minimal element is always the \emph{split}, or \emph{additive} exact structure, that we denote by $\ce^{\add}.$ The maximal element is denoted by $\cE^{\max},$ its existence was proved by Rump \cite{Rump11}.\footnote{One should always keep in mind that the structures $\ce^{\add}$ and $\ce^{\max}$ depend on the category $\ca$.}

To an exact structure $\ce$ on an additive category $\ca$, one can associate a category of \emph{effaceable functors} $\eff \ce$ (such functors go at least back to Grothendieck), that can be also interpreted as the category of Auslander's \emph{contravariant defects} of conflations. It was known for a long time that this category is a Serre subcategory (in fact, even a localizing subcategory) in the category $\Mod \ca$ of additive functors $\ca^{\op} \to \Ab$ \cite[Appendix A]{Kel1}, see also \cite{Fiorot}. Enomoto \cite{Enomoto1} classified all subcategories of $\Mod \ca$ that arise as categories of defects of exact structures on $\cA.$ By using some properties of these categories proved by Enomoto in \cite{Enomoto1,Enomoto2}, we slightly improve on his classification and provide a simpler description of the lattice of exact structures. 

\begin{thmx} [= Theorem \ref{classif_general:body}] \label{classif_general} 
The lattice of exact structures on an arbitrary idempotent complete additive category $\ca$ is isomorphic to the lattice of Serre subcategories of the abelian category $\eff (\ce^{\max}).$
\end{thmx}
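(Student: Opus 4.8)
The plan is to construct mutually inverse, order-preserving maps between the lattice of exact structures on $\ca$ and the lattice of Serre subcategories of $\eff(\ce^{\max})$; since a pair of mutually inverse monotone maps between lattices is automatically a lattice isomorphism, this suffices. For a conflation $\sigma\colon A\xrightarrow{f}B\xrightarrow{g}C$ of an exact structure, write $\delta_\sigma\in\Mod\ca$ for its contravariant defect, i.e.\ the cokernel of $\Hom(-,g)\colon\Hom(-,B)\to\Hom(-,C)$, so that $0\to\Hom(-,A)\to\Hom(-,B)\to\Hom(-,C)\to\delta_\sigma\to0$ is exact; recall that $\eff\ce$ contains all such defects and is their localizing closure in $\Mod\ca$. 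The first map is $\Phi(\ce)=\eff\ce$. It takes values in the Serre subcategories of $\eff(\ce^{\max})$: since $\ce\le\ce^{\max}$, every $\ce$-conflation is a $\ce^{\max}$-conflation, so $\eff\ce\subseteq\eff(\ce^{\max})$, and a subcategory of $\Mod\ca$ that is closed under subobjects, quotients and extensions and contained in the Serre subcategory $\eff(\ce^{\max})$ is a Serre subcategory of the latter. The same observation makes $\Phi$ monotone.

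In the other direction, to a Serre subcategory $\cS\subseteq\eff(\ce^{\max})$ I would assign the class $\Psi(\cS)=\ce_\cS$ of all three-term sequences $A\xrightarrow{f}B\xrightarrow{g}C$ for which $0\to\Hom(-,A)\to\Hom(-,B)\to\Hom(-,C)\to\delta\to0$ is exact with $\delta\in\cS$. By Enomoto's reconstruction theorem applied to $\ce^{\max}$ (used below), any such sequence is already a $\ce^{\max}$-conflation, so $\ce_\cS$ is exactly the class of $\ce^{\max}$-conflations whose defect lies in $\cS$; in particular it is closed under isomorphism and finite direct sums, each of its members is a kernel--cokernel pair, and pushouts of its inflations and pullbacks of its deflations exist because they do in $\ce^{\max}$. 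The remaining exact-category axioms I would verify by transporting them through the defect functor: a split conflation has zero defect and $0\in\cS$; for a composable pair of deflations $B\xrightarrow{p}C\xrightarrow{q}D$ there is a right exact sequence $\delta_{p}\to\delta_{qp}\to\delta_{q}\to0$ relating the defects of the three conflations involved, so closure of $\cS$ under quotients and extensions yields closure of $\ce_\cS$-deflations under composition; the defect of the pullback of a $\ce^{\max}$-conflation along an arbitrary morphism is a subobject of the original defect, so closure of $\cS$ under subobjects handles pullbacks of $\ce_\cS$-deflations; and dually the defect of a pushout along an arbitrary morphism is a quotient of the original defect, so closure under quotients handles pushouts of $\ce_\cS$-inflations. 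Thus each Serre condition corresponds to one axiom, $\ce_\cS$ is an exact structure, and $\Psi$ is visibly monotone.

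It then remains to check that $\Phi$ and $\Psi$ are mutually inverse. The identity $\Psi(\Phi(\ce))=\ce$ — equivalently, that a three-term sequence is a $\ce$-conflation exactly when its defect lies in $\eff\ce$ — is Enomoto's reconstruction of an exact structure from its category of defects, which I would invoke from \cite{Enomoto1}. For $\Phi(\Psi(\cS))=\cS$: every defect of a $\ce_\cS$-conflation lies in $\cS$ by construction, so $\eff(\ce_\cS)\subseteq\cS$; conversely, using the structural facts about $\eff(\ce^{\max})$ from \cite{Enomoto1,Enomoto2} to the effect that each of its objects is built from defects of $\ce^{\max}$-conflations by the Serre operations, one finds that any object of the Serre subcategory $\cS$ is built from those defects of $\ce^{\max}$-conflations that themselves lie in $\cS$, i.e.\ from defects of $\ce_\cS$-conflations, whence $\cS\subseteq\eff(\ce_\cS)$. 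Therefore $\Phi$ and $\Psi$ are mutually inverse isomorphisms of lattices.

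The main obstacle is the construction of $\Psi$: proving that $\ce_\cS$ satisfies the exact-category axioms for \emph{every} Serre subcategory $\cS$, that is, that the axioms translate under the defect functor into exactly the conditions defining a Serre subcategory of $\eff(\ce^{\max})$, with no extra admissibility-type hypothesis needed once one works inside $\eff(\ce^{\max})$ rather than inside $\Mod\ca$. Closely related is the care required in the identity $\Phi(\Psi(\cS))=\cS$ so that ``generated by defects'' recovers $\cS$ exactly, which requires correctly relating the finitely presented defects to arbitrary objects of $\eff(\ce^{\max})$. Both steps rest on the finer properties of $\eff(\ce^{\max})$ established by Enomoto, which is why the theorem is presented as a minor improvement of his classification.
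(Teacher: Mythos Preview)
Your approach is essentially correct but takes a genuinely different route from the paper in the construction of $\Psi$. The paper does not verify the exact-category axioms for $\ce_\cS$ at all. Instead it appeals to Enomoto's classification (Theorem~\ref{Enomoto_classif}): a subcategory $\cd\subseteq\cc_2(\ca)$ arises as $\eff\ce$ iff it satisfies the Serre-type condition~(a) on right modules \emph{and} the dual condition~(b) on left modules. The paper's key step is then the observation that Auslander's $\Ext^2$-duality $V\colon(\eff\ce^{\max})^{\op}\overset{\sim}{\to}\eff(\ce^{\max})^{\op}$ carries a Serre subcategory $\cS$ of $\eff\ce^{\max}$ to a Serre subcategory of $\eff(\ce^{\max})^{\op}$, so condition~(b) comes for free. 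Your route avoids the duality and most of Theorem~\ref{Enomoto_classif}, which is more self-contained; the paper's route is shorter and explains conceptually why no ``extra admissibility-type hypothesis'' is needed.

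There is one point where your direct verification needs a citation to be complete. The defect calculations you give establish [E0], [E0$^{\op}$], [E1$^{\op}$], [E2$^{\op}$] and [E2], all of which are governed by the \emph{contravariant} defect; but [E1] (closure of inflations under composition) is naturally controlled by the \emph{covariant} defect, and your slogan ``each Serre condition corresponds to one axiom'' leaves it unaccounted for. This is not fatal: by Keller's reduction of Quillen's axioms (the appendix to \cite{DReitenSS}; see also \cite{Buh}), the dual package [E0$^{\op}$], [E1$^{\op}$], [E2$^{\op}$], [E2] already implies [E1]. You should invoke this explicitly. This is precisely the asymmetry that the paper resolves instead via the $\Ext^2$-duality.

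Two minor points. First, your worry about $\Phi(\Psi(\cS))=\cS$ is unnecessary: every object of $\eff\ce^{\max}$ already \emph{is} the defect of some $\ce^{\max}$-conflation (this is \cite[Proposition~A.1]{Enomoto2}, quoted in the paper), so any $M\in\cS$ equals $\delta_\sigma$ for a $\ce^{\max}$-conflation $\sigma$, which then lies in $\ce_\cS$ by definition. Second, $\eff\ce$ is not closed under arbitrary subobjects in $\Mod\ca$, only under finitely generated ones (this is exactly the content of Proposition~\ref{abelian_Serre}(ii)); your argument still works because all objects of $\eff\ce^{\max}$ are finitely generated, but the phrasing should be adjusted.
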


For an exact category $\cE,$ one can investigate the structure of the interval $[\ce^{\add}, \ce]$ in this lattice. As an interval in a complete bounded lattice, it is itself a complete bounded lattice. It is isomorphic to the lattice of Serre subcategories of the category $\eff \ce$. In general, its structure can be quite complicated, but we show that in the following generality it is Boolean. Enomoto \cite{Enomoto1} introduced the notion of \emph{admissible} exact categories. He calls an exact category $\ce$ admissible if any object in the category $\eff \ce$ has finite length. In \cite{Enomoto2}, he further investigated the relation between the admissiblity of an exact category and the structure of its \emph{almost split}, or \emph{Auslander-Reiten} conflations. We recall the necessary definitions in Section \ref{AR+admissible}. We formulate and prove explicitly the following statement, again essentially by combining some results of Enomoto \cite{Enomoto1, Enomoto2}.

\begin{thmx}[=Theorem \ref{classif_adm:body}] \label{classif_adm}
Let $\cE$ be an admissible exact category on a Krull-Schmidt category $\cA.$ Then the following lattices are isomorphic:
\begin{itemize}
    \item The interval $[\ce^{\add}, \ce]$ in the lattice of exact structures;
    \item The lattice of subsets of the set of AR conflations in $\cE;$
    \item The lattice of subsets of the set of all the indecomposables in $\cA$ that are not projective in $\cE$.
\end{itemize}
In particular, the interval $[\ce^{\add}, \ce]$ is a Boolean lattice.
\end{thmx}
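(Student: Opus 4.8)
The plan is to reduce the statement to the classification of Serre subcategories of the abelian category $\eff\cE$ and then to exploit admissibility.

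First, I would invoke Theorem~\ref{classif_general}. A Krull-Schmidt category is idempotent complete, so the lattice of exact structures on $\cA$ is isomorphic to the lattice of Serre subcategories of $\eff(\cE^{\max})$, and under this isomorphism an exact structure $\cE'$ corresponds to its category of defects $\eff\cE'$; since $\eff$ is monotone (every $\cE'$-conflation is a $\cE$-conflation when $\cE'\le\cE$), the whole interval $[\cE^{\add},\cE]$, which is exactly the set of exact structures $\cE'\le\cE$, corresponds to the set of Serre subcategories of $\eff(\cE^{\max})$ contained in $\eff\cE$. As $\eff\cE$ is itself a Serre subcategory of $\eff(\cE^{\max})$, these are precisely the Serre subcategories of the abelian category $\eff\cE$, and the bijection is an isomorphism of lattices.

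Next I would use admissibility. By definition $\cE$ is admissible exactly when every object of $\eff\cE$ has finite length, so $\eff\cE$ is a length abelian category. For any length abelian category, sending a Serre subcategory to the set of isomorphism classes of simple objects it contains is an isomorphism of lattices onto the power set of the set of simple objects: a Serre subcategory is recovered as the full subcategory of objects all of whose composition factors lie in the chosen set of simples, and this assignment visibly preserves inclusions, intersections and joins. Hence $[\cE^{\add},\cE]$ is isomorphic to the power set lattice of the set of simple objects of $\eff\cE$; in particular it is Boolean. It then remains to match these simple objects with the AR conflations of $\cE$ and with the indecomposables of $\cA$ that are not projective in $\cE$. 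Here I would appeal to the results of Enomoto recalled in Section~\ref{AR+admissible}: on a Krull-Schmidt category the contravariant defect of an AR conflation is a simple object of $\eff\cE$, and admissibility of $\cE$ guarantees, conversely, that every simple object of $\eff\cE$ is of this form; moreover an AR conflation is determined up to isomorphism by the target of its deflation, and these targets are exactly the indecomposables of $\cA$ that are not projective in $\cE$, each such object admitting an essentially unique AR conflation. Composing these bijections of sets, and the induced isomorphisms of power set lattices, with the isomorphism obtained above yields all three claimed lattice isomorphisms.

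The conceptual core of the argument, and its only non-formal ingredient, is Enomoto's characterization of admissibility through AR conflations together with the identification of the simple defects \cite{Enomoto1,Enomoto2}; everything else is the assembly above and the standard classification of Serre subcategories of a length category. The point that deserves some care is to verify that each bijection in the chain is compatible with the lattice operations, equivalently that the intermediate sets stand in a genuinely order-preserving correspondence with the relevant families of subsets, but this is routine once the bijections of underlying sets are in place.
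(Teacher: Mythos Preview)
Your proposal is correct and follows essentially the same route as the paper's proof: the paper invokes Corollary~\ref{classif_interval} (which is itself derived from Theorem~\ref{classif_general:body} exactly as you re-derive it) to identify $[\cE^{\add},\cE]$ with the lattice of Serre subcategories of $\eff\cE$, then uses admissibility and the standard classification of Serre subcategories of a length category, and finally appeals to Corollary~\ref{AR:simple} to match the simples with AR conflations and with non-projective indecomposables. One small remark: the fact that every simple object of $\eff\cE$ is the defect of an AR conflation does not actually require admissibility (it follows from Proposition~\ref{Prop:Enomoto2:2.3} and the description of simples in $\Mod\cA$); what admissibility buys you, beyond $\eff\cE$ being a length category, is that \emph{every} non-projective indecomposable $Z$ admits an AR conflation ending at it, since any non-split conflation ending at $Z$ has a finite-length defect with $S_Z$ as a composition factor, forcing $S_Z\in\eff\cE$.
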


Note that the condition on the exact category to be admissible does not impose restrictions on the underlying additive category. The split exact structure on any additive category is admissible, and there are fairly natural examples of admissible non-split exact structures on additive categories, whose maximal exact structures are not admissible. We discuss this further in Section \ref{AR+admissible}. From Theorem \ref{classif_general}, we immediately deduce the following notable result.

\begin{thmx}[=Theorem \ref{max_adm:body}]
\label{maximal_admissible}
Every Krull-Schmidt category $\cA$ admits a unique maximal admissible exact structure, namely the structure $\cE$ whose category of defects $\eff \cE$ is the full subcategory of modules of finite length in $\eff (\ca, \cE^{\max})$. 
\end{thmx}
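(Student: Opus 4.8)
The plan is to deduce this directly from Theorem~\ref{classif_general}. Recall first that, under the isomorphism of that theorem, an exact structure $\ce$ on $\ca$ (which necessarily satisfies $\ce \le \cE^{\max}$) corresponds to its category of defects $\eff\ce$, regarded as a Serre subcategory of the abelian category $\eff(\cE^{\max})$; moreover this correspondence is an isomorphism of lattices, so in particular $\ce \le \ce'$ if and only if $\eff\ce \subseteq \eff\ce'$ inside $\eff(\cE^{\max})$. A Krull--Schmidt category is idempotent complete, so Theorem~\ref{classif_general} indeed applies (in fact only idempotent completeness is needed in what follows).

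Now let $\cl \subseteq \eff(\cE^{\max})$ be the full subcategory of objects of finite length. Two points drive the argument. First, by the very definition of admissibility, an exact structure $\ce$ on $\ca$ is admissible precisely when every object of $\eff\ce$ has finite length, that is, precisely when $\eff\ce \subseteq \cl$. Second, $\cl$ is itself a Serre subcategory of $\eff(\cE^{\max})$: this is the standard fact that the finite-length objects of an abelian category are closed under subobjects, quotients and extensions (for extensions, refine a composition series of the quotient by the image of one of the sub to obtain a finite composition series of the middle term). Hence $\cl$ corresponds, under Theorem~\ref{classif_general}, to a unique exact structure $\cE$ on $\ca$ with $\eff\cE = \cl$.

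It remains to see that this $\cE$ is the greatest admissible exact structure, hence in particular the unique maximal one. Since $\eff\cE = \cl$ consists of finite-length objects, $\cE$ is admissible by the first point above. Conversely, if $\ce$ is any admissible exact structure on $\ca$, then $\eff\ce \subseteq \cl = \eff\cE$ (again by the first point), and therefore $\ce \le \cE$ by the monotonicity of the correspondence. Thus $\cE$ is the maximum of the poset of admissible exact structures on $\ca$, and its category of defects is the full subcategory of finite-length modules in $\eff(\ca, \cE^{\max})$, as claimed.

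I do not expect a genuine obstacle: the substantive content is already packaged in Theorem~\ref{classif_general}, and what remains --- that the bijection there is implemented by $\ce \mapsto \eff\ce$ and is order preserving, and that finite-length objects form a Serre subcategory --- is routine. The only mild care needed is that ``finite length'' be unambiguous, which it is: as $\eff(\cE^{\max})$ is a Serre subcategory of $\Mod \ca$, a composition series of an object of $\eff(\cE^{\max})$ is the same whether computed in $\eff(\cE^{\max})$ or in $\Mod \ca$.
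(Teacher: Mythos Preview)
Your argument is correct and follows exactly the same route as the paper's proof: take the Serre subcategory of finite-length objects in $\eff(\cE^{\max})$, invoke Theorem~\ref{classif_general} to obtain a corresponding exact structure, and observe that admissibility of $\ce$ is equivalent to $\eff\ce$ lying in this subcategory. You have simply made explicit a few points the paper leaves implicit (idempotent completeness of Krull--Schmidt categories, order-preservation of the bijection, and the unambiguity of ``finite length'').
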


It follows from the work of Zhu and Zhuang \cite{ZZ} that the class of Krull-Schmidt categories $\cA$ whose maximal exact structures are admissible is much larger than that of additively finite categories. In particular, it includes all $\Hom-$finite, \emph{locally finite} categories. These are the categories where for each object $X$, both $\Hom(X,-)$ and $\Hom(-, X)$ vanish on all but finitely many indecomposables.  We give an alternative proof of this statement in Section \ref{AR+admissible}. 

For each full subcategory $\cb \subset \ca$ and each exact structure $\ce$, there exist a canonical pair of smaller exact structures $\ce^{\cb}, \ce_{\cb} \in [\ce^{\add}, \ce]$ where objects of $\cb$ are projective, resp. injective. In general, there are many structures $\ce' \in  [\ce^{\add}, \ce]$ not of this form. We prove that an exact structure $\ce$ is admissible if and only if each smaller exact structure has form $\ce^{\cb}$ for a certain full subcategory $\cb \subset \ca$ and form $\ce_{\cb'}$ for (another) full subcategory $\cb' \subset \ca$ (Proposition \ref{relative_subcat}).

\subsection{Relative homological algebra}

Given an exact structure $\ce$, considerations of smaller exact structures $\ce' < \ce$ can be seen from a different point of view. For each pair of objects $A, B \in \cA$ and each exact structure $\ce' < \ce,$ we have $\Ext^1_{\ce'}(A, B) \subset \Ext^1_{\ce}(A, B).$ In other words, $\Ext^1_{\ce'}(-, -)$ is a sub-bifunctor of $\Ext^1_{\ce}(-,-).$ Such sub-bifunctors are the principal subject of \emph{relative homological algebra}. This topic goes back to Hochschild \cite{Hochschild}. It was further developed by Butler and Horrocks \cite{ButlerHorrocks} and Auslander and Solberg \cite{AS}, see also the survey \cite{Solberg} for historical remarks and references. There is a certain class of additive sub-bifunctors of $\Ext^1_{\ce}(-,-)$, called \emph{closed}. It was proved in \cite{DReitenSS} that an additive sub-bifunctor of 
$\Ext^1_{\ce}(-,-)$ is closed if and only if it has the form $\Ext^1_{\ce'}(-, -)$ for some exact structure $\ce' < \ce.$ Our paper could have been thus called ``Relative homological algebra and degeneration of Hall algebras''. Theorem \ref{classif_general} is also a generalization of a result of Buan \cite{Buan} concerning closed additive sub-bifunctors of $\Ext^1_{\Lambda}(-,-)$ on categories of modules $\mod \Lambda$ over Artin algebras.

We would like to note that the existence of the maximal exact structure on an arbitrary additive category $\cA$ proved by Rump \cite{Rump11} implies that, more generally, studies of exact structures on $\ca$ are always the same as studies of closed additive sub-bifunctors of a certain bifunctor $\ca^{\op} \times \ce \to \Ab.$ This consequence of Rump's result seems to be underappreciated. However, given an exact structure $\ce$, it is often more reasonable to consider it by itself and not as a sub-structure of the maximal exact structure, as the latter may behave much worse.

\subsection{Exact structures and degenerations of Hall algebras}

It is a natural question to ask how the Hall algebras of different exact structures on the same additive category $\cA$ are related between each other. By using Theorem \ref{classif_general}, we answer this question. We consider functions $w: \Iso(\cA) \to \mathbb{N}.$ For an exact structure $\ce$ on $\ca$, some of these functions induce algebra filtrations on its Hall algebra $\ch(\ce).$ We call such functions \emph{$\cE-$quasi valuations}. When such a function is additive on direct sums, we call it an \emph{$\cE-$valuation}. 

\begin{thmx}[= Theorem \ref{each_degen_is_HA:body}]
Let $\cA$ be a $\Hom-$finite idempotent complete $\mathbb{F}_q-$linear additive category and let $\cE$ be an $\Ext^1-$finite exact structure on $\cA.$ Then for each $\cE-$valuation $w:~\Iso(\cA) \to \mathbb{N},$ the associated graded algebra induced by $w$ on the Hall algebra $\cH(\cE)$ is the Hall algebra $\cH(\cE'),$ for some $\cE' \in [\ce^{\add}, \ce].$
\end{thmx}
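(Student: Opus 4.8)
The plan is to reconstruct the smaller exact structure $\cE'$ directly from the valuation $w$, show that the associated graded of $\cH(\cE)$ with respect to the $w$-induced filtration has the same structure constants as $\cH(\cE')$, and finally invoke Theorem~\ref{classif_general} to guarantee that the exact structure we construct is legitimate. The starting point is the structure constant formula: in $\cH(\cE)$, the product $[A]\cdot[B]$ is a sum $\sum_{[C]} g^C_{AB}[C]$, where $g^C_{AB}$ counts (suitably normalized) conflations $A \rightarrowtail C \twoheadrightarrow B$ in $\cE$. Since $w$ is an $\cE$-valuation, it is additive on direct sums; in particular $w(A) + w(B) = w(A \oplus B)$, and the split conflation always contributes the term $[A \oplus B]$. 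The key numerical observation is that for \emph{any} conflation $A \rightarrowtail C \twoheadrightarrow B$ one has $w(C) \leq w(A) + w(B)$ (this is exactly the ``quasi-valuation'' condition ensuring $w$ induces an algebra filtration), with equality detecting the conflations that survive to the associated graded. So the plan's first step is to isolate the set $\mathcal{S}_w$ of conflations in $\cE$ along which $w$ is ``additive'', i.e. $w(C) = w(A) + w(B)$, and show that in $\gr_w \cH(\cE)$ the product $[A] \cdot [B]$ is computed by summing $g^C_{AB}$ only over those $C$ admitting a conflation in $\mathcal{S}_w$.

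The second step is to show that $\mathcal{S}_w$ is itself an exact structure $\cE'$ on $\cA$. Rather than verifying Quillen's axioms by hand (the obstruction-theoretic pushout/pullback axiom being the delicate one), I would pass through the bifunctor picture: closure of $\Ext^1_{\cE'}$ under the relevant operations is equivalent, by the Dräxler--Reiten--Smalø--Solberg criterion quoted in the excerpt, to $\Ext^1_{\cE'}$ being a \emph{closed} additive sub-bifunctor of $\Ext^1_{\cE}$. Equivalently, by Theorem~\ref{classif_general}, it suffices to exhibit the category of defects $\eff\,\cE'$ as a Serre subcategory of $\eff\,\cE$ (which is itself a Serre subcategory of $\eff(\cE^{\max})$). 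The candidate is the Serre subcategory generated by the defects of those conflations in $\mathcal{S}_w$; one then checks that this Serre subcategory, run back through Enomoto's correspondence, recovers exactly the conflations in $\mathcal{S}_w$. The additivity of $w$ under direct sums is what makes $\mathcal{S}_w$ closed under direct sums of conflations, which is the seed needed for the generated Serre subcategory to be well-behaved; the inequality $w(C) \leq w(A)+w(B)$ for pushouts/pullbacks along arbitrary maps is what guarantees $\mathcal{S}_w$ is closed under the operations that translate into the Serre property (closure under subobjects, quotients, extensions of defects).

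The third step matches the Hall algebras: once $\cE' \in [\cE^{\add}, \cE]$ is in hand with $\Ext^1_{\cE'}(A,B)$ precisely the subgroup classifying conflations in $\mathcal{S}_w$, the structure constant $g'^C_{AB}$ of $\cH(\cE')$ counts exactly the $\cE'$-conflations $A \rightarrowtail C \twoheadrightarrow B$, which by construction are the same ones contributing to the associated graded product computed in step one. One must be slightly careful that $\cE'$ is $\Ext^1$-finite so that $\cH(\cE')$ is defined — but this is immediate since $\Ext^1_{\cE'}(A,B) \subseteq \Ext^1_{\cE}(A,B)$ and the latter is finite by hypothesis. Finally one checks the filtration is exhaustive and the grading-degree bookkeeping matches: the $w$-degree of $[C]$ as a basis element of $\cH(\cE)$ agrees with the degree it carries in $\cH(\cE')$ under the same $w$ (which is now an $\cE'$-valuation, trivially), so the identification $\gr_w \cH(\cE) \cong \cH(\cE')$ is an isomorphism of graded algebras, not merely of algebras.

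I expect the main obstacle to be step two: proving that $\mathcal{S}_w$ is genuinely an exact structure. The naive approach via Quillen's axioms requires showing that the class of conflations along which $w$ is additive is closed under pushout along arbitrary morphisms and pullback along arbitrary morphisms, and it is not obvious a priori that additivity of $w$ along a conflation is inherited by its pushouts. This is precisely where one must use the finer properties of $w$ packaged in the definition of an $\cE$-valuation (not merely a quasi-valuation), and where routing through Enomoto's classification — translating the problem into the Serre-subcategory language where ``closed under the operations'' becomes the three Serre axioms on defects — should do the heavy lifting and is the only reason the statement is clean. The remaining steps are essentially bookkeeping with Hall numbers.
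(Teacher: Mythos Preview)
Your overall strategy matches the paper's: define the surviving conflations $\mathcal{S}_w$, show their defects form a Serre subcategory of $\eff\cE$, invoke Theorem~\ref{classif_general} to obtain $\cE'$, and then read off the Hall structure constants. Steps~1 and~3 are correct bookkeeping and coincide with the paper's Proposition~\ref{filtrations_and_char}.

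Where your outline is vague is precisely your Step~2, and the paper's argument there (Proposition~\ref{each_val_is_char}) is cleaner than what you sketch. Rather than taking the Serre subcategory \emph{generated} by the defects of $\mathcal{S}_w$ and then checking it does not pick up extra defects, the paper defines a function $\widetilde{w}:\Iso(\eff\cE)\to\mathbb{N}$ by $\widetilde{w}(\delta^\sharp)=w(A)-w(B)+w(C)$ for any conflation $\delta:A\rightarrowtail B\twoheadrightarrow C$ with contravariant defect $\delta^\sharp$. Two standard lemmas do all the work: Schanuel's lemma (applied to the projective resolutions~(\ref{proj_res_contra})) shows $\widetilde{w}$ depends only on the isomorphism class of the defect, not on the chosen conflation; the Horseshoe lemma shows $\widetilde{w}$ is additive on short exact sequences in $\eff\cE$. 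Since $\widetilde{w}$ takes values in $\mathbb{N}$ (this is the $\cE$-valuation inequality), its kernel is \emph{automatically} a Serre subcategory---no generation or separate verification needed---and it consists exactly of the defects of conflations in $\mathcal{S}_w$. Your heuristic that ``pushouts/pullbacks translate into the Serre property'' is not the mechanism; the actual mechanism is Schanuel plus Horseshoe on the defect side.
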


Given a pair of exact structures $\ce' < \ce$ on $\ca,$ we would like to study the $\ce-$valuations $w$ such that the degeneration of $\ch(\ce)$ induced by $w$ is precisely $\ch(\ce').$ In this work, we construct such $w$ explicitly for a pair of admissible exact structures satisfying suitable finiteness conditions.

\begin{thmx}[= Theorem \ref{degen:body}] \label{degen}
Let $\cA$ be a $\Hom-$finite $\mathbb{F}_q-$linear additive category. Suppose $\cA$ is endowed with two $\Ext^1-$finite admissible exact structures $\cE' < \cE.$ If $$\supp \Hom(-, M)|_{\cp(\ce') \backslash\cp(\ce)} < \infty, \quad \forall M \in \cA,$$
then $\cH(\cE')$ is the associated graded algebra of $\cH(\cE)$ with respect to a filtration given by an $\ce-$valuation defined by
\begin{align*}
w_{\cE, \cE'}(M) = \sum\limits_{P \in \ind(\cp(\ce'))\backslash \ind(\cp(\ce))}\dim\Hom_\cA(P, M).
\end{align*}
\end{thmx}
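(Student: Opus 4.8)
The strategy is to verify two things: first, that the function $w_{\cE,\cE'}$ is genuinely an $\cE$-valuation (so that it defines an algebra filtration on $\cH(\cE)$ whose associated graded makes sense, using the earlier Theorem on each degeneration being a Hall algebra); and second, that the resulting associated graded algebra is $\cH(\cE')$ on the nose, not merely some $\cH(\cE'')$ with $\cE'' \in [\ce^{\add},\cE]$. The finiteness hypothesis $\supp \Hom(-,M)|_{\cp(\ce')\backslash\cp(\ce)} < \infty$ is exactly what is needed to make $w_{\cE,\cE'}(M)$ a finite natural number for every $M$, and additivity of $\dim\Hom_\cA(P,-)$ on direct sums gives that $w_{\cE,\cE'}$ is additive, hence a candidate $\cE$-valuation. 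What must still be checked is the multiplicative compatibility: for a conflation $0 \to A \to M \to C \to 0$ in $\cE$, one needs $w_{\cE,\cE'}(M) \geq w_{\cE,\cE'}(A) + w_{\cE,\cE'}(C)$, with equality precisely when the conflation is already a conflation in $\cE'$. Applying $\Hom_\cA(P,-)$ for $P \in \ind(\cp(\ce'))$ to this conflation gives a left-exact sequence, so $\dim\Hom(P,M) \leq \dim\Hom(P,A) + \dim\Hom(P,C)$ always, and equality for all such $P$ is equivalent to $\Hom(P,-)$ sending the conflation to a short exact sequence, i.e. to $P$ being ``relatively projective'' with respect to this conflation. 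Since $P$ ranges over all indecomposables projective in $\cE'$, equality for all of them should force the conflation to lie in $\cE'$; here I would invoke Proposition \ref{relative_subcat} (each smaller admissible exact structure has the form $\cE'_{\cb}$, i.e. is cut out by the vanishing of defects on a subcategory), together with the characterization of conflations via effaceable functors.

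**Identifying the associated graded.** Granting that $w := w_{\cE,\cE'}$ is an $\cE$-valuation, by Theorem \ref{each_degen_is_HA:body} the associated graded $\gr_w \cH(\cE)$ equals $\cH(\cE'')$ for some $\cE'' \in [\ce^{\add},\cE]$; by the construction underlying that theorem, $\cE''$ is the exact structure whose conflations are precisely the \emph{$w$-additive} conflations of $\cE$, that is, those $0 \to A \to M \to C \to 0$ in $\cE$ with $w(M) = w(A) + w(C)$. So the whole theorem reduces to the identity $\cE'' = \cE'$, which by the previous paragraph is the statement that a conflation of $\cE$ is $w$-additive if and only if it lies in $\cE'$. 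The ``if'' direction is the easier half: if the conflation already lies in $\cE'$, then every $P \in \cp(\ce')$ is projective with respect to it by definition of $\cp(\ce')$, so $\Hom_\cA(P,-)$ is exact on it and $w$-additivity holds. The ``only if'' direction is where admissibility of both $\cE$ and $\cE'$ and the finiteness hypothesis all get used: one wants to say that the relatively projective objects determine the exact structure, which in the admissible setting follows from Theorem \ref{classif_adm:body} (the interval is Boolean, governed by the non-projective indecomposables) combined with the explicit description $\cE' = \cE^{\cp(\cE')}$ of Proposition \ref{relative_subcat}.

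**Main obstacle.** The delicate point is the ``only if'' direction, i.e. showing that $w$-additivity of a conflation in $\cE$ already forces membership in $\cE'$, rather than in some intermediate structure $\cE' < \cE'' < \cE$. A priori, $w$-additivity only says $\Hom(P,-)$ is exact on the conflation for each indecomposable $P$ projective in $\cE'$; one must upgrade this, via the relative-projective description of admissible exact structures, to the conclusion that the conflation's contravariant defect vanishes on all of $\cp(\cE')$, hence (since $\cE' = \cE^{\cp(\cE')}$) that the conflation belongs to $\cE'$. This is essentially a duality between conflations and their defects: a conflation lies in $\cE^{\cb}$ iff all objects of $\cb$ are projective relative to it iff $\Hom(\cb, -)$ is exact on it. One subtlety worth flagging is that $\cp(\cE')\backslash\cp(\cE)$, not $\cp(\cE')$, appears in the defining sum for $w$; but objects of $\cp(\cE)$ are automatically projective with respect to \emph{every} conflation of $\cE$ (a fortiori $w$-neutral), so they contribute the same amount $\dim\Hom(P,A) + \dim\Hom(P,C)$ regardless, and dropping them changes neither well-definedness (given the hypothesis) nor the $w$-additive conflations. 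Once this bookkeeping is in place and the defect-vanishing equivalence is invoked, the theorem follows.
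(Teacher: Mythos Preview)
Your argument is correct in substance, and it takes a genuinely different route from the paper's own proof. A few small corrections first: in your ``multiplicative compatibility'' sentence you write that one needs $w(M) \geq w(A)+w(C)$, but the definition of an $\cE$-valuation requires $w(M) \leq w(A)+w(C)$ for the middle term $M$, and indeed your $\Hom(P,-)$ computation correctly produces $\leq$; so this is only a slip of the pen. Also, the exact structure cut out by requiring a class $\cb$ of objects to be projective is $\cE_{\cb}$ in the paper's notation, not $\cE^{\cb}$ (the latter makes $\cb$ injective). Once you correct to $\cE' = \cE_{\cp(\cE')}$, your argument goes through: the equality $\cE' = \cE_{\cp(\cE')}$ follows because both lie in the interval $[\cE^{\add},\cE]$ and have the same projectives, and Theorem \ref{classif_adm:body} says admissible substructures of $\cE$ are determined by their projectives.

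The paper's proof proceeds differently. Rather than characterising $w$-additive conflations via defect vanishing, it uses Theorem \ref{Thm:Eno2} (the identity $\Ex_+(\cE) = \AR_+(\cE)$ for admissible $\cE$) to write $[A]-[M]+[C]$ as a non-negative integer combination $\sum_U b_U([\tau U]-[\text{mid}]+[U])$ of AR conflations in $\cE$, and then evaluates each $w_P$ on an AR conflation using the Kronecker-delta identity of Lemma \ref{max_adm}(ii). This yields the explicit formula $w_{\cE,\cE'}([A]-[M]+[C]) = \sum_{P \in \ind(\cp(\cE'))\backslash\ind(\cp(\cE))} b_P$, from which both the $\cE$-valuation property and the $\cE'$-characteristic property are read off directly. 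Your approach bypasses the AR decomposition entirely: you use only left-exactness of $\Hom(P,-)$, the finite-dimensionality of $\Hom$-spaces (to turn a dimension equality into surjectivity), and the Boolean classification of Theorem \ref{classif_adm:body}. This is more conceptual and arguably cleaner; the paper's route, on the other hand, yields an explicit formula for $w_{\cE,\cE'}$ on the kernel $\Lambda^{\cE,\cE'}$ in terms of AR-multiplicities, which is useful for the cone computations in Section~\ref{Sec:5}. Finally, your detour through Theorem \ref{each_degen_is_HA:body} is unnecessary: once you have shown that $w$ is $\cE'$-characteristic in $\cE$, Proposition \ref{filtrations_and_char}(ii) already gives $\gr_w \cH(\cE) = \cH(\cE')$ directly.
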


Here by $\supp \Hom(-, M)$ we mean the set of indecomposable objects $X$ such that $\Hom(X, M) \neq 0$. When $\cA$ is locally finite, thanks to the theorem of Zhu and Zhuang \cite{ZZ} mentioned above, Theorem \ref{degen} applies to all pairs of $\Ext^1-$finite exact structures.

The category of representations of a Dynkin quiver is additively finite. This implies that the maximal (i.e. abelian) exact structure on it is admissible.
In this case, we prove that the degenerations of the Hall algebra of the abelian category given by smaller exact structures can be identified precisely with the degenerations defined in \cite{FFR1}. In particular, the split exact structure corresponds to a $q-$commutative polynomial algebra, this recovers the famous Poincar\'e-Birkhoff-Witt (PBW) theorem in this quantized setting (Corollary \ref{Cor:q-symm}). Thus, Theorem \ref{degen} can be thought of as a generalization of the PBW theorem in the framework of Hall algebras. Our work is not the first such generalization. Namely, Berenstein and Greenstein \cite{BG} proved a PBW-type theorem for all finitary exact categories. We discuss the difference in our methods and results in Section \ref{BG}.

Our proof of Theorem \ref{degen} uses tools of the Auslander-Reiten theory, namely the fact that in any admissible exact category, each conflation can be seen as a \emph{non-negative} linear combination of AR-conflations. This property does not hold for arbitrary exact structures on additive categories, see \cite{Enomoto2} for details and references. It was first studied by Auslander \cite{Aus4} and Butler \cite{Butler} for the categories of representations of finite-dimensional algebras (endowed with the abelian exact structure). It should be stressed out that while this property is usually formulated in terms of subgroups in $K_0(\ce^{\add})$ and denoted by ``$\Ex(\ce) = \AR(\ce)$'', it actually concerns \emph{semigroups} (see already Auslander's proof in \cite{Aus4}), and so should be rather denoted by ``$\Ex_+(\ce) = \AR_+(\ce)$''. We discuss this in Section \ref{AR+admissible}.

The theorem of Berenstein-Greenstein shows that $\ch(\ce^{\add})$ is a degeneration of $\ch(\ce)$ even beyond the admissible case. It is possible that this degeneration can be obtained from an $\ce-$valuation. Moreover, we believe that for $\ce' < \ce$ one can find $\ce-$valuations inducing degenerations from $\ch(\ce)$ to $\ch(\ce')$ in much larger generality than that of Theorem \ref{degen}. In general, it will not be enough to take formal sums of functions of the form $\dim \Hom(X, -)$ for objects $X \in \cA.$ While we have a conjectural approach to this problem, a lot of details are to be worked out. We plan to address it in future work.

\subsection{Polyhedral cones}
The Hall algebra $\cH(\ce)$ is defined by designating a multiplication on a linear basis indexed by the objects in $\ce$. When the category $\ce$ is furthermore Krull-Schmidt and additively finite, then it admits a presentation by finitely many generators and relations. 

In Gr\"obner theory, one associates to an ideal $I$ in a commutative polynomial ring $R$ a polyhedral fan, called the Gr\"obner fan of the ideal. Each maximal dimensional cone in this fan gives the quotient ring $R/I$ a linear basis consisting of standard monomials with respect to this cone; the multiplication of elements in such a basis endows $R/I$ with a structure termed "algebra with a straightening law" (ASL).   

The first cone $\mathcal{D}$ we will associate to the Hall algebra $\cH(\ce)$ is motivated by this construction in Gr\"obner theory, but in a non-commutative setting. Such a cone parameterizes all possible non-negative degrees that can be assigned to a natural generating set of $\cH(\ce)$ consisting of the class of indecomposable objects, so that they induce algebra filtrations on $\cH(\ce)$. For a point in $\mathcal{D}$, by taking the associated graded algebra one gets a degeneration of $\cH(\ce)$. If such a point comes from the interior of $\mathcal{D}$, the degenerate algebra is isomorphic to a skew-polynomial algebra.

The second cone $\mathcal{C}$ arises from the following observation in the degeneration of modules: if a module $M$ degenerates to $N$, then for any test module $T$, $\dim\Hom(T,M)\leq \dim\Hom(T,N)$. Semi-simplification is a particular case of degeneration, hence any module $T$ defines via $\dim\Hom(T,-)$ a filtration on the Hall algebra hence a point in the cone $\mathcal{D}$. 

Motivated by the Poincar\'e-Birkhoff-Witt filtration in Lie algebras, this point of view was taken in \cite{FFR1, FFR2} for the category consisting of finitely generated modules of the path algebra of a Dynkin quiver taken with the abelian exact structure, where it is shown that the cones $\mathcal{C}$ and $\mathcal{D}$ are dual cones via $\dim\Hom(-,-)$. As a consequence, $\mathcal{D}$ is a non-empty polyhedral cone, which is simplicial and the equations of its facets are known.

In this paper, we generalize this result to the framework of exact categories. In fact, we study a relative version of this result involving two exact structures $\cE' < \cE$ on a $\Hom-$finite idempotent complete additively finite category. In this situation, we define two cones:
\begin{enumerate}
    \item the cone $\mathcal{D}^{\cE,\cE'}$ consisting of degree functions which can be imposed on the indecomposables in $\mathcal{A}$ such that they induce algebra filtrations on $\mathcal{H}(\mathcal{E})$ whose associated graded algebras are $\mathcal{H}(\mathcal{E'})$; 
    \item the cone $\mathcal{C}^{\cE,\cE'}$, lying in the kernel of $K_0(\cE')\to K_0(\cE)$, and generated by the differences $[A]-[B]+[C]$ for all conflations $A \rightarrowtail B \twoheadrightarrow C$ in $\cE\backslash\cE'$. 
\end{enumerate}    
We will show that these cones are dual to each other:

\begin{thmx}[=Theorem \ref{Thm:cone}]
We have
$$\mathcal{D}^{\cE,\cE'}=\{\varphi\in (K_0(\cE^{\mathrm{add}})\otimes_\mathbb{Z}\mathbb{R})^*\mid \text{for any }x\in\cC^{\cE,\cE'}, \varphi(x)>0;\ \text{for any }y\in\cC^{\cE',\cE^{\mathrm{add}}},\ \varphi(y)=0\}.$$
\end{thmx}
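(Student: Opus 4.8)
\textbf{Proof proposal for Theorem \ref{Thm:cone}.}

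The plan is to prove the two inclusions separately, using the earlier structural results as black boxes. Write $V = K_0(\cE^{\add}) \otimes_\Z \R$, so that $\cC^{\cE, \cE'} \subset V$ and $\cD^{\cE, \cE'} \subset V^*$. For the first inclusion ``$\subseteq$'', fix $\varphi \in \cD^{\cE, \cE'}$. By definition $\varphi$ is a degree function on the indecomposables of $\cA$ inducing an algebra filtration on $\cH(\cE)$ whose associated graded is $\cH(\cE')$. By Theorem \ref{each_degen_is_HA:body} (and the explicit description of the filtration), passing to the associated graded kills exactly the structure constants coming from conflations of $\cE$ that are not conflations of $\cE'$, and keeps those of $\cE'$. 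Translating ``the associated graded of a product $[A] * [C]$ along a non-split conflation $A \rightarrowtail B \twoheadrightarrow C$ of $\cE \setminus \cE'$ does not contribute the term $[B]$'' into an inequality on degrees gives $\varphi([A]) + \varphi([C]) < \varphi([B])$, i.e. $\varphi([A] - [B] + [C]) > 0$; since $\cC^{\cE, \cE'}$ is generated by such differences and by non-negative combinations thereof, and a valuation is additive, we get $\varphi(x) > 0$ for all nonzero $x \in \cC^{\cE, \cE'}$. Similarly, for a conflation in $\cE' \setminus \cE^{\add}$, i.e. a non-split conflation of $\cE'$ itself, the associated graded \emph{does} retain the term $[B]$ with the same coefficient, which forces $\varphi([A]) + \varphi([C]) = \varphi([B])$, hence $\varphi(y) = 0$ for all $y \in \cC^{\cE', \cE^{\add}}$. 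This shows $\varphi$ lies in the right-hand set.

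For the reverse inclusion ``$\supseteq$'', take $\varphi$ in the right-hand side: $\varphi$ is non-negative on the differences $[A] - [B] + [C]$ over $\cE \setminus \cE'$, strictly positive on the nonzero such combinations, and zero on all $[A] - [B] + [C]$ coming from $\cE'$. I first need to check $\varphi$ actually defines a \emph{degree function} on the indecomposables inducing an algebra filtration, i.e. a point of $\cD^{\cE, \cE^{\add}}$; this is where additive finiteness is used, since then $V$ is finite-dimensional with basis the classes of indecomposables, and the relevant combinatorics (every conflation of $\cE$ is a non-negative combination of AR-conflations, by admissibility of $\cE^{\max}$ in the additively finite case, Theorem \ref{classif_adm:body} and the surrounding discussion of $\Ex_+ = \AR_+$) show that the inequalities $\varphi([A]) + \varphi([C]) \leq \varphi([B])$ over all conflations of $\cE$ follow from those over AR-conflations, and these are exactly encoded by non-negativity of $\varphi$ on generators of $\cC^{\cE, \cE^{\add}} = \cC^{\cE, \cE'} + \cC^{\cE', \cE^{\add}}$. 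Once $\varphi$ is known to induce a filtration on $\cH(\cE)$, Theorem \ref{each_degen_is_HA:body} says the associated graded is $\cH(\cE'')$ for some $\cE'' \in [\cE^{\add}, \cE]$, and the conflations of $\cE''$ are precisely those conflations $A \rightarrowtail B \twoheadrightarrow C$ of $\cE$ with $\varphi([A] - [B] + [C]) = 0$ (degree preserved) — again reducing to AR-conflations via the semigroup statement. The vanishing condition on $\cC^{\cE', \cE^{\add}}$ forces every conflation of $\cE'$ to be a conflation of $\cE''$, so $\cE' \leq \cE''$; the strict positivity on $\cC^{\cE, \cE'}$ forces every conflation of $\cE \setminus \cE'$ to be killed, so $\cE'' \leq \cE'$. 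Hence $\cE'' = \cE'$ and $\varphi \in \cD^{\cE, \cE'}$.

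The main obstacle I expect is the reduction ``inequalities/equalities on all conflations $\Leftrightarrow$ inequalities/equalities on AR-conflations''. A degree function on indecomposables a priori only controls the \emph{generators} of the Hall algebra and the products of two generators along a single conflation; promoting this to a genuine algebra filtration, and identifying exactly which conflations survive in the associated graded, requires that the semigroup $\Ex_+(\cE)$ of classes $[A] - [B] + [C]$ of conflations be generated by the AR-conflation classes, so that a linear functional non-negative (resp. zero) on the AR-generators is automatically non-negative (resp. zero) on the whole semigroup. This is the content of $\Ex_+(\cE) = \AR_+(\cE)$ for admissible exact structures, which holds here because in the additively finite case $\cE \leq \cE^{\max}$ is admissible; I would invoke it via the discussion in Section \ref{AR+admissible} and Theorem \ref{classif_adm:body}. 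A secondary, more bookkeeping-type subtlety is to make sure the duality is stated with the correct open/closed and strict/non-strict conditions — the cone $\cD^{\cE, \cE'}$ is relatively open in the face of $\cD^{\cE, \cE^{\add}}$ cut out by $\varphi|_{\cC^{\cE', \cE^{\add}}} = 0$, matching the mixed strict/equality description on the right-hand side — and to confirm that $\cD^{\cE, \cE'}$ is nonempty, which follows by exhibiting the explicit valuation $w_{\cE, \cE'}$ of Theorem \ref{degen:body}.
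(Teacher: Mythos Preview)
Your approach is essentially the same as the paper's: both directions rest on translating ``the associated graded is $\cH(\cE')$'' into the condition that $\varphi$ is $\cE'$-characteristic in $\cE$, and the key technical input for the $\supseteq$ direction is the semigroup identity $\Ex_+(\cE)=\AR_+(\cE)$, which holds in the additively finite setting. The paper's $\supseteq$ argument is slightly more direct (it shows $\varphi$ is $\cE'$-characteristic and invokes Proposition~\ref{filtrations_and_char} immediately, rather than passing through Theorem~\ref{each_degen_is_HA:body} to produce an auxiliary $\cE''$ and then identifying $\cE''=\cE'$), but your route is equivalent. One correction: you have the inequalities reversed in two places --- for a conflation $A\rightarrowtail B\twoheadrightarrow C$ the filtration condition reads $\varphi([B])\le \varphi([A])+\varphi([C])$ (and strictly for conflations in $\cE\setminus\cE'$), not the other way around; your conclusion ``$\varphi([A]-[B]+[C])>0$'' is correct but inconsistent with the line preceding it.
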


These cones are simplicial. By taking the face lattice, we obtain a polyhedral geometric interpretation of the 
Boolean property of the interval $[\ce', \ce]$ in the lattice of exact structures (Remark \ref{Rmk:Enomoto}).

Some examples arising from quiver representations are discussed in Section \ref{Examples}. The principal example is the category of representations of an equioriented quiver of type $\tt A$. In this situation, there exists a face of $\mathcal{D}$ standing at the crossroads of representation of algebras,  Lie theory and combinatorial commutative algebra (Example \ref{Ex:An}).
We also discuss the category of representations of a quiver of type $\tt A_3$ endowed with two non-trivial exact structures (Example \ref{example:A3}) and of the disjoint union of two quivers of type $\tt A_2$ (Example \ref{disjoint_A2}).

\subsection{Extriangulated categories}

Let us note that Zhu and Zhuang \cite{ZZ} actually work in the setting of {\it extriangulated categories}. This notion was recently introduced by Nakaoka and Palu \cite{NakaokaPalu1} as a unification of exact and of triangulated categories. Under suitable finiteness conditions, we can generalize our main results to the setting of extriangulated categories. Since it requires to present the rather long definition of extriangulated categories, to define Hall algebras of those, and to give a quite technical proof of their associativity (that is implicitly suggested by the work of Dyckerhoff and Kapranov \cite{DK} combined with that of Nakaoka and Palu \cite{NakaokaPalu2}), we prefer to postpone it to the upcoming sequel to this paper. We just give a brief remark in Section \ref{extriangulated}.

\subsection{Structure of the paper}
In Section \ref{Sec:2}, we collect basics on exact categories and give a review of the structure of AR-conflations in an admissible exact category. We study the lattice of exact structures on an additive category. The definition of the Hall algebra associated to an exact category is briefly recalled in Section \ref{Sec:3}. Section \ref{Sec:4} is devoted to the study of the algebra filtrations on the Hall algebra arising from degree functions imposed on the isomorphism classes of objects. In Section \ref{Sec:5}, we use the language of polyhedral cones to interpret the result in Section \ref{Sec:4} as a duality of polyhedral cones. Examples are discussed in Section \ref{Examples}. In Section \ref{extriangulated}, we discuss some outlooks on extriangulated categories which will be the topic of a sequel to this paper.

\subsection{Conventions}
Throughout the paper we keep the convention that the set of natural numbers $\mathbb{N}$ contains $0$. All categories are assumed to be essentially small.

\subsection{Acknowledgements}

M. G. is indebted to Hiroyuki Nakaoka, whose question led to the conception of this paper, and to Henning Krause for bringing his attention to the article \cite{Enomoto1}. The work that led to this paper goes back to the visit of M. G. to the University of Bielefeld in May-June 2018, and he is very grateful to Henning Krause for the invitation and to the entire BIREP group for valuable discussions and hospitality.

We are thankful to Ghislain Fourier and Markus Reineke for useful discussions and for the organisation of the workshop ``Geometry and representation theory at the interface of Lie algebras and quivers'' that took place in September 2018 at the Ruhr-Universit\"at Bochum and led to the collaboration. We are grateful to Eugene Gorsky, Bernhard Keller, Steffen K\"onig and Yann Palu for comments on preliminary versions of this paper.

\section{Exact structures on an additive category}\label{Sec:2}

\subsection{Exact categories} \label{Reminder_exact}

In an additive category $\mathcal{A}$ a pair of morphisms 
$$\xymatrix{{A\quad} \ar@{>->}[r]^i &*++{B} \ar@{->>}[r]^p &*++{C,}}$$ 
is said to be {\it exact}, or a {\it kernel-cokernel pair} if $i$ is a kernel of $p$ and $p$ is a cokernel of $i.$ An {\it exact category} in the sense of Quillen \cite{Q} is $\mathcal{A}$ endowed with a class $\mathcal{E}$ of exact pairs, closed under isomorphism and satisfying the following axioms: 

\begin{itemize}
\item[{[E0]}] For each object $E \in \mathcal{A},$ the identity morphism $1_E$ is an inflation.
\item[{[E0$^{op}$]}] For each object $E \in \mathcal{A},$ the identity morphism $1_E$ is a deflation.
\item[{[E1]}] The class of inflations is closed under composition.
\item[{[E1$^{op}$]}] The class of deflations is closed under composition.
\item[{[E2]}] The push-out of an inflation along an arbitrary morphism exists and yields
an inflation.
\item[{[E2$^{op}$]}] The pull-back of a deflation along an arbitrary morphism exists and yields
a deflation.
\end{itemize}

Here an {\it inflation} is a morphism $i$ for which there exists $p$ such that $(i,p)$ belongs to $\mathcal{E}.$ It is also called an {\it admissible monic}, or an {\it admissible monomorphism}. {\it Deflations} (or {\it admissible epics}, or {\it admissible epimorphisms}) are defined dually. We depict admissible monics
by $\rightarrowtail$ and admissible epics by $\twoheadrightarrow$ in diagrams. Exact pairs belonging to $\mathcal{E}$ are called {\it conflations}, or {\it admissible exact sequences}. An inflation being simultaneously a deflation is an isomorphism. By abuse of notation, we will denote an exact category $(\cA, \cE)$ simply by $\cE$ and call $\cA$ its {\it underlying additive category}.

\begin{remark}
This set of axioms is not minimal, cf. \cite{Kel1}, \cite{Buh}. We follow the terminology of \cite{GabrielRoiter, Kel1}. There are also slightly different notions of exact categories, but in case when $\mathcal{A}$ is {\it weakly idempotent complete}, all of them coincide, cf. \cite{Buh}. We will always assume that $\mathcal{A}$ is {\it idempotent complete}, that is an even stronger condition, cf. below. 
\end{remark}

Any abelian category has a canonical structure of an exact category. In this case, the class of conflations coincides with
the class of all exact pairs.
If $\mathcal{E}$ and $\mathcal{E'}$ are exact categories, an exact functor $\mathcal{E} \to \mathcal{E'}$ is an additive functor taking conflations
of $\mathcal{E}$ to conflations of $\mathcal{E'}.$
A fully exact subcategory of an exact category $\mathcal{E}$ is a full additive subcategory $\mathcal{E'} \subset \mathcal{E}$ which
is closed under extensions, i.e. if it contains the end terms of a conflation of $\mathcal{E},$ it also contains
the middle term. Then $\mathcal{E'}$ endowed with the conflations of $\mathcal{E}$ having their terms in $\mathcal{E'}$ is an exact
category, and the inclusion $\mathcal{E'} \hookrightarrow \mathcal{E}$ is a fully faithful exact functor. 

Note that one additive category may be endowed with a lot of different exact structures. One can consider any additive category $\mathcal{A}$ as an exact category with a split exact structure $\ce^{\add}$: namely, one can take the isomorphism closure in the class of kernel-cokernel pairs of the class
$$\left\{A \rightarrowtail A \oplus B \twoheadrightarrow B | A, B \in \mathcal{A}\right\}$$
as the set of conflations. 
If $\mathcal{A}$ has a structure of an abelian category that is not semi-simple, its split exact structure differs from the abelian one. All possible exact structures are between these two: the class of conflations is necessarily contained in the class of all exact pairs and contains all split exacts pairs. In other words, the split exact structure is the minimal one and the abelian exact structure is the maximal one. More discussion and the precise statement can be found in \cite{DReitenSS}, for the case with no abelian structure see \cite{Rump11} and \cite{Crivei12} and references therein. As shown by Rump \cite{Rump11}, any additive category has a unique maximal exact structure. We will always denote it by $\ce^{\mathrm{max}}$, the underlying additive category will be clear from the context. Br\"ustle, Hassoun, Langford and Roy \cite{BHLR} studied the poset of exact structures on an additive category. 

\begin{theorem} \cite[Theorem 5.3, Corollary 5.4]{BHLR}
The poset of exact structures on an additive category (with respect to the containment order on the corresponding classes of conflations) is a bounded complete lattice. Its maximal element is $\ce^{\max}$ and its minimal element is $\ce^{\add}.$
\end{theorem}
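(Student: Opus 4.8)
The plan is to identify the bottom and top elements explicitly and then to establish that arbitrary infima exist; completeness of the lattice will then follow by a purely order-theoretic argument. For the bottom element, I would verify that $\ce^{\add}$ is contained in every exact structure $\ce$ on $\cA$, i.e.\ that split kernel-cokernel pairs are always conflations (see \cite{Buh}): axiom $[E0]$ gives the conflation $A \xrightarrow{1_A} A \to 0$, so $A \to 0$ is a deflation, and pulling it back along $B \to 0$ via $[E2^{op}]$ yields the conflation $A \rightarrowtail A \oplus B \twoheadrightarrow B$; closure under isomorphism then gives $\ce^{\add} \subseteq \ce$. For the top element I would simply invoke Rump's theorem \cite{Rump11}, which furnishes a maximal exact structure $\ce^{\max}$. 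Since $\cA$ is essentially small, the exact structures on $\cA$ form a set, so speaking of a poset is legitimate.

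The heart of the matter is the claim that the intersection $\ce := \bigcap_{i\in I}\ce_i$ of an arbitrary \emph{non-empty} family of exact structures on $\cA$ is again an exact structure; granting this, $\ce$ is visibly the infimum of $\{\ce_i\}_{i\in I}$. The class $\ce$ consists of kernel-cokernel pairs, is closed under isomorphism, and contains $\ce^{\add}$, so $[E0]$ and $[E0^{op}]$ hold. Axioms $[E1]$ and $[E1^{op}]$ are immediate, since a composite of morphisms that are inflations in every $\ce_i$ is, by $[E1]$ for each $\ce_i$, an inflation in every $\ce_i$. The only axioms requiring care are $[E2]$ and $[E2^{op}]$, and here the crucial observation is that \emph{pushouts and pullbacks are computed in the underlying additive category $\cA$ and do not depend on the exact structure}. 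Concretely, for $[E2]$: given a conflation $(i,p)\in\ce$ and a morphism $f\colon A\to A'$, fix any $i_0\in I$ (using $I\neq\emptyset$); since $(i,p)\in\ce_{i_0}$, axiom $[E2]$ for $\ce_{i_0}$ produces the pushout of $i$ along $f$ in $\cA$, hence a canonical conflation $A'\rightarrowtail B'\twoheadrightarrow C$. The objects and maps of this conflation are determined by universal properties, so they coincide for every index, and applying $[E2]$ in each $\ce_j$ to $(i,p)\in\ce_j$ shows this conflation lies in $\ce_j$; hence it lies in $\bigcap_j\ce_j=\ce$, so the pushout yields an inflation in $\ce$. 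The case of $[E2^{op}]$ is dual, using pullbacks.

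Finally I would invoke the standard order-theoretic fact that a poset possessing a greatest element and in which every non-empty subset has an infimum is a complete lattice, the supremum of a subset being the infimum of its set of upper bounds (which is non-empty because it contains the greatest element). Applied to the poset of exact structures, with greatest element $\ce^{\max}$ and non-empty infima given by intersections, this shows it is a complete lattice, bounded below by $\ce^{\add}$ and above by $\ce^{\max}$. Note that Rump's theorem is genuinely used here: the empty intersection of exact structures is the class of \emph{all} kernel-cokernel pairs, which in general is not an exact structure, so a separate top element really is needed. The one non-formal step in the whole argument is the verification of $[E2]$ and $[E2^{op}]$ for intersections; as explained, it rests entirely on the absoluteness of pushouts and pullbacks, everything else being bookkeeping or an appeal to Rump's existence theorem.
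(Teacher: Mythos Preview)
Your proof is correct. Note, however, that the paper itself does not supply a proof of this statement: it is quoted verbatim from \cite[Theorem 5.3, Corollary 5.4]{BHLR} and used as background. Your argument is essentially the one given in \cite{BHLR}: closure of the class of exact structures under arbitrary (non-empty) intersection, combined with Rump's existence theorem for $\ce^{\max}$ and the standard order-theoretic fact that a poset with top element and non-empty infima is a complete lattice. The verification that $\ce^{\add}\subseteq\ce$ via $[E0]$ and $[E2^{op}]$ is standard (cf.\ \cite[Lemma~2.7]{Buh}), and your handling of $[E2]$, $[E2^{op}]$ for intersections---resting on the fact that pushouts and pullbacks are universal constructions in $\cA$ independent of the exact structure---is exactly the right point to isolate.
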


We say that $\mathcal{E}$ is {\it idempotent complete}, or that {\it idempotents split} in $\mathcal{E},$ if both $\mbox{Ker} (e)$ and $\mbox{Ker} (\mathrm{id} - e)$ exist in $\mathcal{E}$ for each idempotent $e: \mathcal{E} \to \mathcal{E}.$ Any abelian category is idempotent complete. Note that the idempotent completeness is a property of an additive category rather than of its exact structure. In particular, if we take an idempotent complete exact category and endow it with a bigger exact structure (by enlarging its class of conflations), the new exact category still will be idempotent complete. Also, if an additive category can be endowed with an abelian structure, it is idempotent complete.

We will assume that $\mathcal{E}$ is idempotent complete, linear over some field $k$ and essentially small. Suppose that moreover for all objects $A, B \in \mathcal{E},$ we have $\dim(\Hom(A, B)) < \infty.$ Then it is well-known that $\mathcal{E}$ is a Krull-Schmidt category. That is, each object decomposes into a finite direct sum of indecomposables in a unique way, up to permutation, and each of the indecomposables has a local endomorphism ring.

To each essentially small exact category $\cE$, one associates its Grothendieck group $K_0(\cE)$ defined in the same way as in the case of abelian categories: it is the free abelian group on the set of isomorphism classes $\Iso(\cE)$ modulo the relations $[B] = [A] + [C]$ for all conflations $A \rightarrowtail B \twoheadrightarrow C.$  In particular, to each additive category $\cA$, one associates the Grothendieck group of its split exact structure. It is called the {\it additive}, or {\it split} Grothendieck group of $\cA.$ We denote it by $K_0^{\mathrm{add}}(\cA),$ it is also often denoted $K_0^{split}(\cA)$ or $K_0(\cA, \oplus).$  As each exact category $\cE$ is additive, we have an additive Grothendieck group of its underlying additive category. By abuse of notation, we denote it $K_0^{\mathrm{add}}(\cE)$. It is known that for a Krull-Schmidt category, its additive Grothendieck group is freely generated by the classes of indecomposable objects. When an additive category is endowed with two different exact structures $\cE$ and $\cE',$ where $\cE$ is larger than $\cE'$ (i.e. the class of conflations in $\cE$ contains the class of conflations in $\cE'$), it follows that the Grothendieck group $K_0(\cE)$ is a quotient of the Grothendieck group $K_0(\cE').$ In particular, for each essentially small exact category $\cE,$ its Grothendieck group $K_0(\cE)$ is a quotient of $K_0^{\mathrm{add}}(\cE).$ If $\cE$ can be endowed with an abelian structure $\cC,$ then $K_0(\cC)$ is a quotient of $K_0(\cE).$ 

The notion of extensions and the Yoneda theory generalize naturally from the setting of abelian to that of exact categories. In particular, for a pair of objects $A, C$ in an essentially small exact category $\cE,$ its space of first extensions $\Ext_\cE^1(C,A)$ can be identified with the quotient of the set of all conflations 
$$A \hookrightarrow B \twoheadrightarrow C$$
by the same equivalence relation as in the abelian case. 

An object $P$ is said to be projective in $\cE$ if $\Ext_\cE^1(P, B) = 0$ for any $B \in \cA.$ This is not the usual definition, but an equivalent one. We denote by $\cp(\ce)$ the full subcategory of the projective objects in the category $\cE.$ 

\subsection{Finitely presented functors and defects}

Enomoto \cite{Enomoto1} classified exact structures on an idempotent complete additive category in terms of various functor categories. In this and in the next subsections, we recall some of results from this work and from the work of  Br\"ustle-Hassoun-Langford-Roy \cite{BHLR} and give certain improvements to them. Most of our proofs are essentially obtained by combining some arguments from \cite{Enomoto1} and \cite{Enomoto2}.

For an additive category $\cA$, a right $\cA-$module is a contravariant additive functor $\cA^{\op} \to \Ab$ to the category of abelian groups. The category of right $\cA-$modules is denoted by $\Mod \cA.$ The category of left modules is the category of covariant additive functors to abelian groups, it can be seen as $\Mod \cA^{\op}.$ Representable functors $\Hom(-, X) \in \Mod \cA$ and $\Hom(X,-) \in \Mod \cA^{\op}$ are projectives in these module categories. Moreover, both $\Mod \cA$ and $\Mod \cA^{\op}$ are abelian and have enough projectives, and these are nothing but direct summands of direct sums of representable functors. An $\cA$-module $M$ is finitely generated if admits an epimorphism $\Hom(-, X) \twoheadrightarrow M$ from a representable functor. It is moreover finitely presented if it admits an exact sequence $\Hom(-, X) \to \Hom(-, Y) \to M \to 0,$ for some $X, Y \in \cA,$ i.e. if it is a cokernel of a morphism of representable functors. It is well-known that $\cA$ is idempotent complete if and only if the essential image of the Yoneda embedding  $\cA\to\Mod\cA$ given by $X \mapsto \Hom(-, X)$ consists of all finitely generated projective $\cA-$modules. We denote by $\fp(\ca^{\op}, \Ab)$ the full subcategory of finitely presented modules in $\Mod \cA.$ We define finitely presented left modules dually, they form the category $\fp(\ca, \Ab).$

Let $\ce$ be an exact structure on $\ca.$ For any conflation $\delta: A \overset{f}\hookrightarrow B \overset{g}\twoheadrightarrow C$ in $\ce$, one has exact sequences of left, resp. right $\cA-$modules

$$0 \to \Hom(C, -) \overset{\Hom(g, -)}\longrightarrow \Hom(B, -) \overset{\Hom(f, -)}\longrightarrow \Hom(A,  -)$$

and 

$$0 \to \Hom(-, A) \overset{\Hom(-, f)}\longrightarrow \Hom(-, B) \overset{\Hom(-, g)}\longrightarrow \Hom(-, C).$$

Cokernels of the rightmost maps are the \emph{covariant defect}  $\delta_{\sharp}$, resp. the \emph{contravariant defect}  $\delta^{\sharp}$ of $\delta$, first considered by Auslander \cite{Aus1}, see also \cite[IV.4]{ARS}:

\begin{align} \label{proj_res_co}
0 \to \Hom(C, -) \overset{\Hom(g, -)}\longrightarrow \Hom(B, -) \overset{\Hom(f, -)}\longrightarrow \Hom(A, -) \to \delta_{\sharp}  \to 0; 
\end{align}

\begin{align} \label{proj_res_contra}
0 \to \Hom(-, A) \overset{\Hom(-, f)}\longrightarrow \Hom(-, B) \overset{\Hom(-, g)}\longrightarrow \Hom(-, C) \to \delta^{\sharp}  \to 0.
\end{align}

Since representable functors are projectives, sequences (\ref{proj_res_co}) and (\ref{proj_res_contra}) are projective resolutions of $\delta_{\sharp}$, resp. of $\delta^{\sharp}$. 

The defects have another interpretation: they are \emph{(weakly) effaceable functors with respect to inflations}, resp. \emph{deflations} \cite{KaledinLowen}, often called simply \emph{effaceable} \cite[Appendix A]{Kel1}, see also \cite{Fiorot, Enomoto1, Enomoto2}. A finitely presented right $\ca-$module $M$ is called \emph{$\cE-$effaceable} if 
 each $W \in \cE$ and $w \in M(W)$, there exists a deflation $g : E \twoheadrightarrow W$ in $\cE$ such that $(Mg)(w) = 0.$ Effaceable left functors are defined dually. 
 
 \begin{proposition} \cite[Proposition A.1]{Enomoto2}
 Let $\ca$ be an idempotent complete additive category.
 A finitely presented right $\ca-$module $M$ is $\ce-$effaceable if and only if there is a conflation $\delta$ in $\ce,$ such that $M = \delta^{\sharp}$. Dually, a finitely presented left $\ca-$module $N$ is $\ce-$effaceable if and only if there is a conflation $\delta$ in $\ce,$ such that $N = \delta_{\sharp}$. 
 \end{proposition}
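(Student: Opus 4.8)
The plan is to prove both directions of the equivalence for right $\ca$-modules (the case of left modules being formally dual, via passing to $\ca^{\op}$ with the opposite exact structure). The key observation is that the defect $\delta^{\sharp}$ has the explicit finite presentation \eqref{proj_res_contra}, so the whole statement is essentially about unwinding what effaceability means in terms of such presentations.

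For the ``only if'' direction, suppose $M = \delta^{\sharp}$ for a conflation $\delta\colon A \overset{f}\hookrightarrow B \overset{g}\twoheadrightarrow C$. By \eqref{proj_res_contra} we have a surjection $\pi\colon \Hom(-,C) \twoheadrightarrow M$ with kernel $\im \Hom(-,g)$. I would check effaceability directly: take $W \in \ce$ and $w \in M(W)$. Lift $w$ along $\pi_W\colon \Hom(W,C) \twoheadrightarrow M(W)$ to some $h \in \Hom(W,C)$. Now form the pull-back of the deflation $g$ along $h$, which by [E2$^{\mathrm{op}}$] yields a deflation $g'\colon E \twoheadrightarrow W$ together with a morphism $h'\colon E \to B$ with $g h' = h g'$. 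Then $(Mg')(w)$ is the image in $M(W) = \cok(\Hom(-,g))_W$ — wait, one needs to be slightly careful: $M = \cok(\Hom(-,g))$ as a functor, evaluated at $E$. The element $(Mg')(w)$ equals $\pi_E(h g')$, and since $h g' = g h' \in \im(\Hom(E,g))$, it maps to $0$ in $M(E)$. Hence $(Mg')(w) = 0$, proving $M$ is $\ce$-effaceable.

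For the ``if'' direction, suppose $M$ is finitely presented and $\ce$-effaceable. Since $M$ is finitely presented, choose a presentation $\Hom(-,X) \xrightarrow{\Hom(-,u)} \Hom(-,Y) \xrightarrow{\varepsilon} M \to 0$; here I use that $\ca$ is idempotent complete so that finitely generated projective $\ca$-modules are exactly the representables, which lets us pick $Y \in \ca$ genuinely representing a projective cover-type surjection onto $M$. Now apply effaceability to $W = Y$ and the canonical element $w = \varepsilon_Y(1_Y) \in M(Y)$: there is a deflation $g\colon E \twoheadrightarrow Y$ with $(Mg)(w) = 0$, i.e. $\varepsilon_E(g) = 0$, so $g$ factors through $\ker \varepsilon$ via some representable. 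Let $A = \ker g$ in $\ce$ (which exists since $g$ is a deflation), giving a conflation $\delta\colon A \overset{f}\hookrightarrow E \overset{g}\twoheadrightarrow Y$. The associated presentation \eqref{proj_res_contra} for $\delta^{\sharp}$ and the presentation of $M$ can then be compared: $\Hom(-,g)$ and $\Hom(-,u)$ have the same image inside $\Hom(-,Y)$ — the inclusion $\im\Hom(-,g) \subseteq \ker\varepsilon = \im\Hom(-,u)$ follows from $\varepsilon_E(g)=0$ functorially, while the reverse inclusion requires lifting $u$ through $g$, which is possible precisely because $\varepsilon_E(g) = 0$ forces (via the presentation) a factorization. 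Once the two images coincide, $\delta^{\sharp} = \cok\Hom(-,g) = \cok\Hom(-,u) = M$.

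The main obstacle I expect is the reverse inclusion $\ker\varepsilon \subseteq \im\Hom(-,g)$ in the ``if'' direction — equivalently, showing that the single deflation produced by effaceability at the universal element $w = \varepsilon_Y(1_Y)$ already ``sees'' all relations of $M$, not just enough to kill that one element. The resolution is that $w$ is a \emph{generator} of $M$ (since $\varepsilon$ is surjective and $Y$ represents the chosen projective), so killing $w$ along $g$ forces the comparison map $\Hom(-,g) \to \ker\varepsilon \hookrightarrow \Hom(-,Y)$ to be surjective onto $\ker\varepsilon$ by Yoneda applied to the relation module; one should also verify $g$ can be chosen so that, after possibly enlarging $E$ by a summand (using idempotent completeness of $\ca$), the map $\Hom(-,E) \to \Hom(-,Y)$ is exactly $\Hom(-,g)$ with the required image. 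This is precisely the point where Enomoto's argument from \cite[Appendix A]{Enomoto2} is used, and I would cite or adapt that verification rather than reprove it in full detail.
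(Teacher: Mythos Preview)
The paper does not give its own proof of this proposition; it merely cites \cite[Proposition A.1]{Enomoto2}. So there is nothing to compare against, and your write-up should be evaluated on its own merits.

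First a minor point: your labels are swapped. ``$M=\delta^{\sharp}\Rightarrow M$ is effaceable'' is the \emph{if} direction, and ``$M$ effaceable $\Rightarrow M=\delta^{\sharp}$'' is the \emph{only if} direction. The pull-back argument you give for the first of these is correct.

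The second direction has a genuine gap. Your Yoneda sentence does not establish $\ker\varepsilon\subseteq\im\Hom(-,g)$. Concretely, this inclusion is equivalent (by evaluating at $X$ and feeding in $u$ itself) to the assertion that $u\colon X\to Y$ factors through the single deflation $g\colon E\twoheadrightarrow Y$ produced by effaceability at the universal element $w=\varepsilon_Y(1_Y)$. Nothing forces such a factorisation: effaceability only tells you $g\in\ker\varepsilon_E$, i.e.\ $g$ factors through $u$, which is the \emph{wrong} direction.

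The fix is exactly the ``enlarging $E$ by a summand'' you allude to, but it should be made explicit rather than deferred. Replace $g$ by $(u,g)\colon X\oplus E\to Y$. Since $\ca$ is idempotent complete (hence weakly idempotent complete), the obscure axiom holds: a morphism whose composite with some map is a deflation is itself a deflation. As $(u,g)\circ\bigl(\begin{smallmatrix}0\\1_E\end{smallmatrix}\bigr)=g$ is a deflation, so is $(u,g)$. Now $\im\Hom(-,(u,g))=\im\Hom(-,u)+\im\Hom(-,g)=\im\Hom(-,u)=\ker\varepsilon$, the last equality because you already showed $\im\Hom(-,g)\subseteq\ker\varepsilon$. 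Taking $\delta$ to be the conflation $\ker(u,g)\rightarrowtail X\oplus E\overset{(u,g)}{\twoheadrightarrow} Y$ gives $\delta^{\sharp}=\coker\Hom(-,(u,g))=M$.
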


Fiorot \cite{Fiorot} and Enomoto \cite{Enomoto1, Enomoto2} denote the category of right finitely presented $\ce-$effaceable modules by $\eff \cE.$ We denote the category of left $\ce-$effaceable modules by $\eff \cE^{\op}.$ In the setting of extriangulated categories with weak kernels, Ogawa \cite{Ogawa} denotes the category of contravariant defects by $\defff \ce$ and then proves that $\defff \ce = \eff \ce.$ 

Right $\ce-$effaceable functors form an (abelian) localizing Serre subcategory of $\Mod \cA,$ see \cite[Appendix A]{Kel1}. The dual statement naturally holds for left $\ce-$effaceable functors. The category $\fp(\ca^{\op}, \Ab)$ is abelian if and only if $\ca$ has weak kernels. Since deflations always have kernels, the category $\eff \ce$ behaves better than $\fp(\ca^{\op}, \Ab)$ in this aspect.

\begin{proposition} \label{abelian_Serre}
 Let $\ca$ be an idempotent complete additive category. Let $\ce$ be an exact structure on $\ca.$ 
 \begin{itemize}
\item [(i)]
The category $\eff \ce$ is closed under kernels, cokernels and extensions in $\Mod \cA$. In particular, it is abelian.
\item[(ii)]
Suppose that there exists an exact sequence 
$0 \to M_1 \to M \to M_2 \to 0$ in $\Mod \ca$ and that
$M_1$ is finitely generated. Then $M$ is in $\eff \ce$ if and only if both $M_1$ and $M_2$ are in $\eff \cE.$
 \end{itemize}
\end{proposition}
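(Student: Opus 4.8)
The plan is to reduce everything to three facts that are available by the point of the statement: (a) $\eff\ce$ is the category of finitely presented $\ce$-effaceable modules; (b) $\eff\ce$ is a localizing Serre subcategory of $\Mod\ca$ (stated via \cite[Appendix A]{Kel1} just before the proposition), so in particular it is closed under subobjects, quotients and extensions \emph{inside} $\Mod\ca$; (c) the effaceability criterion via deflations from Enomoto's Proposition A.1, i.e. $M\in\eff\ce$ iff $M$ is finitely presented and for every $W\in\cA$ and $w\in M(W)$ there is a deflation $g\colon E\twoheadrightarrow W$ with $(Mg)(w)=0$. The subtlety, and what makes (i) not immediate from (b), is that a Serre subcategory of $\Mod\ca$ need not be closed under the operations of forming kernels and cokernels \emph{computed in} $\Mod\ca$ once one remembers that $\eff\ce$ is not an exact abelian subcategory of $\fp(\ca^{\op},\Ab)$ in general — one must check that the ambient kernel of a map between finitely presented effaceable modules is again finitely presented. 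This is where ``deflations always have kernels'' enters: the presentations \eqref{proj_res_contra} of effaceable modules come from conflations, whose deflations have honest kernels in $\cA$, which is exactly the input that upgrades the general localizing-subcategory statement to genuine closure under kernels.

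First I would prove (i). Closure under extensions in $\Mod\ca$ is immediate from (b). For cokernels: if $f\colon M\to M'$ is a morphism in $\eff\ce$, then $\mathrm{Im}\, f$ is a quotient of $M$, hence in $\eff\ce$ by (b), and $\coker f$ is a quotient of $M'$, hence in $\eff\ce$ by (b); note $\coker f$ is automatically finitely presented as a cokernel of a map of finitely presented modules (using that $\fp(\ca^{\op},\Ab)$ is closed under cokernels, which holds in any module category). For kernels, the point is finite presentability of $\ker f$. Here I would argue as follows: $\mathrm{Im}\, f\in\eff\ce$ is finitely presented, and one has the short exact sequence $0\to\ker f\to M\to\mathrm{Im}\, f\to 0$ in $\Mod\ca$ with $M$ finitely presented and $\mathrm{Im}\, f$ finitely presented, so $\ker f$ is finitely \emph{generated} (standard: the kernel of an epi between f.p.\ modules onto an f.p.\ module is f.g.). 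Now I would invoke (the proof of) part (ii) to conclude that a finitely generated submodule of $M\in\eff\ce$ with effaceable quotient is itself in $\eff\ce$ — and effaceable modules in Enomoto's sense are by definition finitely \emph{presented}, so one gets finite presentability of $\ker f$ for free. Thus $\eff\ce$ is closed under kernels, cokernels and extensions in $\Mod\ca$; being an extension-closed subcategory of an abelian category that is also closed under kernels and cokernels of its own morphisms, it is abelian (and the inclusion is exact).

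Next, (ii). Given $0\to M_1\to M\to M_2\to 0$ in $\Mod\ca$ with $M_1$ finitely generated. If $M\in\eff\ce$: then $M_2$ is a quotient of $M$, so $M_2\in\eff\ce$ by (b) (it is finitely presented as a quotient of a finitely presented module by a finitely generated submodule); and $M_1$ is a submodule of $M$, so $M_1\in\eff\ce$ by (b), once we know $M_1$ is finitely presented — but $M_1$ is finitely generated by hypothesis and is the kernel of the epi $M\twoheadrightarrow M_2$ between finitely presented modules, hence finitely presented. Conversely, if $M_1,M_2\in\eff\ce$, then $M$ is an extension of two finitely presented effaceable modules, so finitely presented, and effaceable by closure under extensions (b). I would spell out the effaceability direction concretely with criterion (c) if one prefers an argument not routed through Keller's appendix: given $W$ and $w\in M(W)$, first kill its image $\bar w\in M_2(W)$ by a deflation $g_1\colon E_1\twoheadrightarrow W$ (possible since $M_2\in\eff\ce$), so $(Mg_1)(w)$ lifts to an element $w_1\in M_1(E_1)$; then kill $w_1$ by a deflation $g_2\colon E_2\twoheadrightarrow E_1$ (possible since $M_1\in\eff\ce$); the composite $g_1 g_2$ is a deflation by [E1$^{\mathrm{op}}$] and $(M(g_1g_2))(w)=0$.

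The main obstacle I anticipate is precisely the circularity between (i) and (ii): the cleanest route proves finite presentability of kernels in (i) by appealing to the ``finitely generated submodule with effaceable quotient is effaceable'' half of (ii), while (ii)'s own proof needs the elementary finite-presentability lemma but \emph{not} (i). So I would organize the write-up by first establishing the homological-algebra lemma (kernel of a map of f.p.\ modules onto an f.p.\ module is f.g.; an f.g.\ kernel as above is f.p.), then proving (ii) directly from (b) and criterion (c), and only then deducing (i) using (ii). The genuinely exact-category-specific input — that deflations have kernels, so that the resolution \eqref{proj_res_contra} behaves well — is what guarantees we never leave the finitely presented world; everything else is formal manipulation inside the abelian category $\Mod\ca$ together with the Serre/localizing property already cited.
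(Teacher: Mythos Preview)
The paper itself does not give a proof here; it just cites \cite[Theorem A.2]{Enomoto2} for (i) and \cite[Proposition 2.10]{Enomoto1} for (ii). So the comparison is really between your sketch and Enomoto's arguments.

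There is a genuine gap in your proof of (ii), and it propagates to (i). You claim the ``homological-algebra lemma'': a finitely generated kernel of an epimorphism between finitely presented modules is finitely presented. This is false in $\Mod\ca$ in general. For a one-object additive category $\ca$ with endomorphism ring $R$ not right coherent, choose a finitely generated right ideal $I$ that is not finitely presented; then $0\to I\to R\to R/I\to 0$ has $R$ and $R/I$ finitely presented and $I$ finitely generated but not finitely presented. So your deduction ``$M_1$ is finitely generated and is the kernel of the epi $M\twoheadrightarrow M_2$ between finitely presented modules, hence finitely presented'' is unjustified. (Relatedly, you conflate $\eff\ce$, which by the paper's convention consists of \emph{finitely presented} effaceable functors, with the localizing Serre subcategory of all effaceable functors from Keller's appendix; closure under subobjects in the latter does not give you finite presentability for free.)

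What actually makes $M_1$ finitely presented is the effaceability itself together with the exact-category input you mention but never use. Since $M_1\subset M$ inherits the deflation-killing property, choose an epimorphism $\alpha\colon\Hom(-,C')\twoheadrightarrow M_1$ and apply effaceability to $w=\alpha_{C'}(\mathrm{id}_{C'})$: there is a deflation $g\colon B'\twoheadrightarrow C'$ with $(M_1g)(w)=0$, hence $\alpha\circ\Hom(-,g)=0$ by Yoneda. Because $g$ is a deflation it has a kernel $A'$ in $\ca$, so $\coker\Hom(-,g)$ is the contravariant defect of the conflation $A'\rightarrowtail B'\twoheadrightarrow C'$ and lies in $\eff\ce$; thus $M_1$ is a quotient of an object of $\eff\ce$, and one continues from there (this is the substance of Enomoto's Proposition 2.10). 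Without this step, your argument does not establish that $M_1$ is finitely presented, and hence neither (ii) nor the kernel part of (i) goes through.
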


\begin{proof}
For the detailed proof of part (i), see \cite[Theorem A.2]{Enomoto2}. Part (ii) is proved in \cite[Proposition 2.10]{Enomoto1}.
\end{proof}

For each finitely presented functor $F \in \fp(\ca^{\op}, \Ab),$ Auslander \cite[II.3]{Aus3} defined the covariant functor $V(F): \ca \to \Ab$ by the rule $V(F)(X) = \Ext^2(F, \Hom(-, X)).$ Dually, for each $G \in \fp(\ca, \Ab),$ he defined the contravariant functor $W(G): \ca^{\op} \to \Ab$ by the rule $W(G)(X) = \Ext^2(G, \Hom(X, -)).$ From the form of projective resolutions (\ref{proj_res_co}) and (\ref{proj_res_contra}) it follows that for each conflation $\delta$, one has

\begin{align} \label{dualities}
V(\delta^{\sharp}) = \delta_{\sharp}, \qquad W(\delta_{\sharp}) = \delta^{\sharp}.
\end{align}

Moreover, these dualities respect morphisms of effaceable functors, and so $V: (\eff \cE)^{\op} \to \eff \ce^{\op}$ and $W: (\eff \ce^{\op})^{\op} \to \eff \ce$ are mutually inverse equivalences, see e.g. \cite[Theorem 3.4 in Chapter II]{Aus3}.

\begin{lemma} \label{eff_inclusion}
For two exact structures $\cE$ and $\cE'$, we have:
$$\cE' \subseteq \cE \Longleftrightarrow \eff \cE' \subseteq \eff \cE \Longleftrightarrow \eff \ce'^{\op} \subseteq \eff\cE^{\op}.$$
\end{lemma}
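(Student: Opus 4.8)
The plan is to prove the chain of equivalences by first establishing the easy implication $\cE' \subseteq \cE \Rightarrow \eff \cE' \subseteq \eff \cE$, then obtaining the reverse implication from a characterization of conflations in terms of their defects, and finally deducing the third equivalence by duality. For the first implication, suppose $\cE' \subseteq \cE$ and let $M \in \eff \cE'$. By the characterization recalled just above (Enomoto's Proposition A.1), $M = \delta^{\sharp}$ for some conflation $\delta$ in $\cE'$. Since $\cE' \subseteq \cE$, the pair $\delta$ is also a conflation in $\cE$, so $M = \delta^{\sharp}$ is the contravariant defect of a conflation in $\cE$, hence $M \in \eff \cE$. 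Thus $\eff \cE' \subseteq \eff \cE$. The same argument with covariant defects $\delta_{\sharp}$ gives $\eff \cE'^{\op} \subseteq \eff \cE^{\op}$, so both forward implications in the statement hold.

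For the converse, I would show that a kernel-cokernel pair $\delta\colon A \rightarrowtail B \twoheadrightarrow C$ belongs to $\cE$ if and only if its contravariant defect $\delta^{\sharp}$ lies in $\eff \cE$; this is the key point. One direction is Enomoto's Proposition A.1 again. For the other direction, suppose $\delta$ is a kernel-cokernel pair with $\delta^{\sharp} \in \eff \cE$. Since $\eff \cE$ consists exactly of the contravariant defects of $\cE$-conflations, there is a conflation $\eta$ in $\cE$ with $\eta^{\sharp} \cong \delta^{\sharp}$ as $\cA$-modules. Because $\delta^{\sharp}$ and $\eta^{\sharp}$ have the minimal projective presentations (\ref{proj_res_contra}) determined by $\delta$ and $\eta$ respectively, an isomorphism between them lifts (after accounting for the ambiguity by projective summands, which is where idempotent completeness and the Krull--Schmidt-type uniqueness of minimal presentations enter) to an isomorphism of the two kernel-cokernel pairs $\delta \cong \eta$; since $\cE$ is closed under isomorphism of exact pairs, $\delta \in \cE$. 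Granting this, if $\eff \cE' \subseteq \eff \cE$ and $\delta$ is any conflation in $\cE'$, then $\delta^{\sharp} \in \eff \cE' \subseteq \eff \cE$, so by the claim $\delta \in \cE$; hence $\cE' \subseteq \cE$.

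Finally, the equivalence $\eff \cE' \subseteq \eff \cE \Leftrightarrow \eff \cE'^{\op} \subseteq \eff \cE^{\op}$ follows from the mutually inverse dualities $V\colon (\eff \cE)^{\op} \to \eff \cE^{\op}$ and $W\colon (\eff \cE^{\op})^{\op} \to \eff \cE$ of (\ref{dualities}): these are defined uniformly (by $V(F)(X) = \Ext^2(F, \Hom(-,X))$, independently of the exact structure), so they restrict to equivalences between the corresponding subcategories for $\cE'$ as well, and an inclusion on one side transports to an inclusion on the other. Alternatively one can simply run the previous two paragraphs verbatim with covariant defects $\delta_{\sharp}$ in place of $\delta^{\sharp}$, using the dual half of Enomoto's Proposition A.1.

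The main obstacle is the claim in the second paragraph that $\delta^{\sharp} \in \eff\cE$ forces $\delta$ itself to be a conflation in $\cE$ — i.e., that a kernel-cokernel pair is recovered, up to the relevant equivalence, from its defect. Morally this is the statement that the assignment $\delta \mapsto \delta^{\sharp}$ is faithful on kernel-cokernel pairs with fixed end terms (two such pairs with isomorphic defects differ by an automorphism of the middle term), combined with the fact that $\cE$ is closed under such isomorphisms; the care needed is in handling the non-uniqueness of projective presentations and the possible presence of projective direct summands, which is exactly where the hypothesis that $\cA$ is idempotent complete is used, as in Enomoto's treatment.
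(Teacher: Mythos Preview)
Your forward implications and the duality step are fine. The gap is in the converse $\eff\cE' \subseteq \eff\cE \Rightarrow \cE' \subseteq \cE$, specifically in your assertion that the resolutions~(\ref{proj_res_contra}) are \emph{minimal} projective presentations. They are not: any conflation with a split summand, e.g.\ $A\oplus P \rightarrowtail B\oplus P \twoheadrightarrow C$, yields a non-minimal resolution of the same defect as $A \rightarrowtail B \twoheadrightarrow C$. Hence an isomorphism $\delta^{\sharp}\cong\eta^{\sharp}$ does not lift to an isomorphism $\delta\cong\eta$ of kernel--cokernel pairs, and you cannot invoke ``Krull--Schmidt-type uniqueness of minimal presentations'' --- neither minimality nor the Krull--Schmidt property is available in the stated generality (only idempotent completeness is assumed).

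Your idea can be salvaged: the two length-$2$ projective resolutions of $\delta^{\sharp}$ are homotopy equivalent, hence (using idempotent completeness) isomorphic after adding contractible summands; via Yoneda this says $\delta$ and $\eta$ agree up to adding split kernel--cokernel pairs, and one then needs that direct summands of conflations are conflations in weakly idempotent complete exact categories. But there is a much shorter route: use the \emph{definition} of $\cE$-effaceability rather than the defect characterization. Apply it to the class of $\mathrm{id}_C$ in $\delta^{\sharp}(C)$ to obtain an $\cE$-deflation $p\colon E\twoheadrightarrow C$ factoring as $p=gh$ through $g$; since $\cA$ is weakly idempotent complete, $gh$ a deflation forces $g$ itself to be a deflation \cite[Prop.~7.6]{Buh}, so $\delta\in\cE$. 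The paper's one-line proof simply points to the defect description and the duality~(\ref{dualities}); the substance of the reverse direction is exactly the injectivity of $\cE\mapsto\eff\cE$, which is built into Enomoto's Theorem~\ref{Enomoto_classif} (a kernel--cokernel pair lies in $\cE$ if and only if its defect lies in $\eff\cE$).
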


\begin{proof}
This follows from the description of effaceable functors as defects of conflations and from the duality (\ref{dualities}).
\end{proof}

Not all finitely presented $\cA-$modules can arise as defects of conflations in exact structures on $\cA.$ In the following two statements, Enomoto classifies all such modules and, further, all categories that appear as $\eff \ce$ for some $\ce.$

\begin{lemma} \cite[Lemma 2.3]{Enomoto1}
For a right $\ca-$module, the following are equivalent:
\begin{itemize}
\item[(i)]
There exists a kernel-cokernel pair $A \to B \overset{g}\to C$ in $\ca$ such that $M$ is isomorphic to
$\coker(\Hom(-, g)).$
\item[(ii)]
There exists an exact sequence 
$$0 \to \Hom(-,A) \to \Hom(-, B) \to \Hom(-,C) \to M \to 0$$
in $\Mod \ca$ and $\Ext^i (M, \Hom(-, X)) =
0$ for $i = 0, 1$ and all $X \in \ca$.
\end{itemize}
\end{lemma}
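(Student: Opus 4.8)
The plan is to prove (i) $\Leftrightarrow$ (ii) by unwinding what the two conditions say and exploiting the projective resolution of a cokernel of a map of representable functors. First I would treat the implication (i) $\Rightarrow$ (ii). Given a kernel-cokernel pair $A \to B \overset{g}\to C$ with $M \cong \coker(\Hom(-,g))$, the fact that $i = \ker(g)$ and $g = \coker(i)$ means that $\Hom(-,g)\colon \Hom(-,B) \to \Hom(-,C)$ has image the functor $\Hom(-,B)/\ker$, and the map $\Hom(-,i)\colon \Hom(-,A)\to\Hom(-,B)$ is a monomorphism whose cokernel maps onto that kernel; one checks directly from the universal properties that the sequence $0 \to \Hom(-,A)\to\Hom(-,B)\to\Hom(-,C)\to M\to 0$ is exact. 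This gives a length-two projective resolution of $M$ by representables (using that $\ca$ is idempotent complete, so representables are exactly the finitely generated projectives, though here we just need that they are projective). It remains to compute $\Ext^i(M,\Hom(-,X))$ for $i=0,1$. Applying $\Hom(-,\Hom(-,X))$ to the resolution and using Yoneda, $\Hom(\Hom(-,Y),\Hom(-,X)) \cong \Hom(Y,X)$, we get the complex $0 \to \Hom(C,X)\to\Hom(B,X)\to\Hom(A,X)$ (the differentials being composition with $g$ and with $i$). Then $\Ext^0(M,\Hom(-,X)) = \ker(\Hom(C,X)\to\Hom(B,X))$, which vanishes because $g$ is epi (being a cokernel), and $\Ext^1(M,\Hom(-,X))$ is the homology at $\Hom(B,X)$, i.e. $\ker(g\cdot{-})/\im(\cdot g)$; this vanishes precisely because $i = \ker g$ means a morphism $B\to X$ killing $i$ factors through $g$. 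So $\Ext^i(M,\Hom(-,X)) = 0$ for $i = 0,1$, as required.

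For the converse (ii) $\Rightarrow$ (i), I would read the data of (ii) as a projective resolution $0 \to P_2 \to P_1 \to P_0 \to M \to 0$ with $P_2 = \Hom(-,A)$, $P_1 = \Hom(-,B)$, $P_0 = \Hom(-,C)$, i.e. the maps come from morphisms $A \overset{i}\to B \overset{g}\to C$ in $\ca$ via Yoneda (full faithfulness of the Yoneda embedding lets us lift the maps of representables to maps of objects, and functoriality identifies the composites). Exactness of the four-term sequence already forces $\Hom(-,i)$ to be mono, hence $i$ is a monomorphism in $\ca$, and forces $\im(\Hom(-,g)) = \im(\Hom(-,i)^{\perp})$ in the appropriate sense. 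The content to extract is that $A \to B \to C$ is a kernel-cokernel pair, and this is exactly where the vanishing of $\Ext^0$ and $\Ext^1$ enters: reversing the computation above, $\Ext^0(M,\Hom(-,X)) = 0$ for all $X$ says that the induced map $\Hom(C,X)\to\Hom(B,X)$ is injective for all $X$, i.e. $g$ is an epimorphism; combined with exactness of the resolution at $P_0$ (so $g$ is the cokernel of its "syzygy"), one gets that $g = \coker$ of the map $\Hom(-,B)\supset \im \Rightarrow$ the relevant subobject, and then $\Ext^1(M,\Hom(-,X)) = 0$ for all $X$ says that any $B \to X$ with $B\to X$ vanishing after precomposition with $i$ factors through $g$, which by Yoneda/fully faithfulness is precisely the statement that $i$ is a kernel of $g$. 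Together with $i$ mono and $g$ epi this yields that $(i,g)$ is a kernel-cokernel pair with $M = \coker(\Hom(-,g))$.

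The main obstacle I anticipate is the bookkeeping in (ii) $\Rightarrow$ (i): one must carefully lift the maps of representable functors to honest morphisms in $\ca$ (using that the Yoneda embedding is fully faithful, which holds for any additive $\ca$), verify that the composite $A \to B \to C$ is zero as a map of objects and not merely that the composite of functors is zero, and then translate the two Ext-vanishing statements into the kernel and cokernel universal properties in $\ca$ rather than in $\Mod\ca$. The key technical device throughout is the Yoneda isomorphism $\Hom_{\Mod\ca}(\Hom(-,Y), F) \cong F(Y)$ applied to $F = \Hom(-,X)$, together with the fact that $\Hom(-,X)$ is projective so that $\Ext^{\geq 1}$ is computed by the explicit two-step resolution. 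Everything else — exactness of the four-term sequence, the identification of $V$ and $W$-type dualities — is not needed here; this lemma is purely about when a module has a "corepresentable" two-step resolution, and the proof is a direct diagram chase once the setup is in place.
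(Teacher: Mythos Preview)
The paper does not give its own proof of this lemma; it is simply quoted from \cite{Enomoto1}. Your overall strategy---use the four-term sequence as a projective resolution by representables, apply $\Hom(-,\Hom(-,X))$, and translate via Yoneda into statements about $i$ and $g$ in $\ca$---is exactly the standard argument and is correct in outline.

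However, you have consistently swapped which universal property is responsible for which vanishing. In the complex
\[
0 \to \Hom(C,X) \xrightarrow{-\circ g} \Hom(B,X) \xrightarrow{-\circ i} \Hom(A,X),
\]
the cohomology at $\Hom(B,X)$ is $\ker(-\circ i)/\im(-\circ g)$. Its vanishing for all $X$ says that every $h\colon B\to X$ with $hi=0$ factors through $g$; this is the universal property of $g=\coker(i)$, not of $i=\ker(g)$ as you write. Conversely, the condition $i=\ker(g)$ is precisely what exactness of the four-term sequence at $\Hom(-,B)$ encodes (via Yoneda: $\Hom(-,i)$ is a kernel of $\Hom(-,g)$ iff $i$ is a kernel of $g$). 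You mention exactness at $P_0$ and at $P_2$ in (ii)$\Rightarrow$(i) but never clearly extract $i=\ker(g)$ from exactness at $P_1=\Hom(-,B)$; instead you try to get it from $\Ext^1=0$, which yields the wrong half of the kernel-cokernel pair.

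So the fix is small but necessary: in both directions, use exactness at $\Hom(-,B)$ for ``$i=\ker(g)$'' and use $\Ext^1(M,\Hom(-,X))=0$ for ``$g=\coker(i)$'' (with $\Ext^0=0$ giving that $g$ is epi). Once these attributions are corrected, your proof goes through.
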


Enomoto \cite{Enomoto1} denotes by $\cc_2(\ca)$ the full subcategory of $\Mod \ca$ consisting of modules satisfying the above equivalent conditions. 

\begin{theorem} \cite[Theorem 2.7]{Enomoto1} \label{Enomoto_classif}
Let $\ca$ be an idempotent complete additive category. Then there exist mutually
inverse bijections between the following two classes.
\begin{itemize}
\item[(i)]
Exact structures $\ce$ on $\ca$.
\item[(ii)]
Subcategories $\cd$ of $\cc_2(\ca)$ satisfying the following conditions.
\begin{itemize}
    \item [(a)]
    Suppose that there exists an exact sequence $0 \to M_1 \to M \to M_2 \to 0$ of finitely generated right $\ca$-modules. Then $M$ is in $\cd$ if and only if both $M_1$ and $M_2$ are in $\cd.$
    \item [(b)]
    Suppose that there exists an exact sequence $0 \to M_1 \to M \to M_2 \to 0$ of finitely generated left $\ca$-modules. Then $M$ is in $\Ext^2(\cd, \Hom(-, \ce))$ if and only if both $M_1$ and $M_2$ are in $\Ext^2(\cd, \Hom(-, \ce))$.
\end{itemize}
\end{itemize}
Under the bijection, an exact structure $\ce$ maps to the category $\eff \cE.$ A subcategory $\cd$ of $\cc_2(\ca)$ maps to the structure $\ce(\cd)$ consisting of all kernel-cokernel pairs $A \overset{f}\to B \overset{g}\to C$ in $\ca,$ such that there exists an exact sequence in $\Mod \ca$
$$0 \to \Hom(-, A) \overset{\Hom(-, f)}\longrightarrow \Hom(-, B) \overset{\Hom(-, g)}\longrightarrow \Hom(-, C) \to M \to 0$$
with $M \in \cd.$
\end{theorem}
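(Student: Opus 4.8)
The plan is to check that the two assignments --- sending an exact structure $\ce$ to $\eff\ce$, and a subcategory $\cd$ to the class $\ce(\cd)$ described in the statement --- are well defined and mutually inverse. Everything rests on a dictionary between kernel--cokernel pairs in $\ca$ and objects of $\cc_2(\ca)$: to a pair $\delta\colon A\overset{f}{\to}B\overset{g}{\to}C$ one attaches its contravariant defect $\delta^\sharp=\coker(\Hom(-,g))$, which lies in $\cc_2(\ca)$ since it has the form in part~(i) of \cite[Lemma 2.3]{Enomoto1}; conversely, by that same lemma every $M\in\cc_2(\ca)$ equals $\delta^\sharp$ for \emph{some} pair $\delta$, and one checks that the pair $\delta$ is unique up to isomorphism --- this is where the orthogonality to representables built into condition~(ii) of that lemma is used. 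Under this dictionary, $\ce(\cd)$ is exactly the class of pairs $\delta$ with $\delta^\sharp\in\cd$, while $\eff\ce$ is exactly the class $\{\delta^\sharp\mid\delta\text{ a conflation of }\ce\}$ by \cite[Proposition A.1]{Enomoto2}; so the two assignments are the same translation read in opposite directions, and the theorem asserts that exact structures correspond precisely to the subcategories obeying (a) and (b).

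First I would show that $\eff\ce$ obeys (a) and (b). Condition (a) for $\eff\ce$ is precisely Proposition~\ref{abelian_Serre}(ii) (equivalently \cite[Proposition 2.10]{Enomoto1}) restricted to the case where all three modules are finitely generated. Condition (b) then follows from Auslander's duality: by the identities (\ref{dualities}) one has $V(\eff\ce)=\eff\ce^{\op}$, so condition (b) for $\eff\ce$ is nothing but condition (a) for the finitely presented $\ce$-effaceable \emph{left} $\ca$-modules, i.e.\ the dual of Proposition~\ref{abelian_Serre}(ii). (That $\eff\ce$ is full and isomorphism-closed is automatic, being a Serre subcategory of $\Mod\ca$.) Granting for the moment that $\ce(\cd)$ is always an exact structure, the two composites are immediate from the dictionary. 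On the one hand, the conflations of $\ce(\cd)$ are by definition the pairs $\delta$ with $\delta^\sharp\in\cd$, so $\eff(\ce(\cd))=\{\delta^\sharp\mid\delta^\sharp\in\cd\}=\cd$, using that each $M\in\cd\subseteq\cc_2(\ca)$ is realized by a kernel--cokernel pair. On the other hand, applying this with $\cd=\eff\ce$ gives $\eff(\ce(\eff\ce))=\eff\ce$, and since $\ce\mapsto\eff\ce$ is injective by Lemma~\ref{eff_inclusion}, this forces $\ce(\eff\ce)=\ce$.

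The remaining step --- showing that $\ce(\cd)$ is an exact structure in the sense of Quillen whenever $\cd$ obeys (a) and (b) --- is the heart of the matter, and I expect it to be the main obstacle. Axioms [E0] and [E0$^{op}$] are immediate, the split pairs having zero defect and $0$ lying in $\cd$ by (a). For the two axioms concerning deflations --- closure under composition [E1$^{op}$] and stability under pull-back [E2$^{op}$] --- the strategy is to pass to the abelian category $\Mod\ca$: the Yoneda functor turns the relevant object diagrams into diagrams of representable modules (it preserves the limits at play), a diagram chase exhibits the candidate defect inside short exact sequences of \emph{finitely generated} $\ca$-modules whose other terms are defects already known to lie in $\cd$, and condition (a), used as a genuine ``two out of three'', places the candidate defect in $\cd$; the existence of the requisite kernel (and pull-back object) in $\ca$ follows because that defect is in $\cc_2(\ca)$ and hence realized by an essentially unique honest kernel--cokernel pair, where idempotent completeness of $\ca$ enters. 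The inflation axioms [E1] and [E2] are handled dually, replacing pull-back by push-out, condition (a) by condition (b), and $\delta^\sharp$ by the covariant defect $\delta_\sharp=V(\delta^\sharp)$; one may also cut down the list by invoking the redundancy of the obscure axiom, which is legitimate since $\ca$ is idempotent complete (cf.\ \cite{Buh}). (Alternatively one can show that conditions (a) and (b) make the additive sub-bifunctor of $\Ext^1_{\ce^{\max}}$ cut out by $\cd$ \emph{closed} and quote \cite{DReitenSS}, but this still needs the inclusion $\ce(\cd)\subseteq\ce^{\max}$, which again relies on (a) and (b).) I anticipate the delicate point to be the bookkeeping in [E1$^{op}$] and [E2$^{op}$]: identifying precisely which short exact sequences of modules occur, ensuring all their terms are finitely generated so that condition (a) applies, and confirming that the pairs produced are genuine kernel--cokernel pairs.
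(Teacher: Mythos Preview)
The paper does not give its own proof of this statement: it is quoted verbatim as \cite[Theorem 2.7]{Enomoto1} and used as a black box, with the proof deferred entirely to Enomoto's original article. So there is nothing in the present paper to compare your argument against.

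That said, your outline is the natural reconstruction of Enomoto's proof and is essentially correct in strategy. The dictionary between kernel--cokernel pairs and objects of $\cc_2(\ca)$, the verification that $\eff\ce$ satisfies (a) via Proposition~\ref{abelian_Serre}(ii) and (b) via the $\Ext^2$-duality~(\ref{dualities}), and the identification of the axiom check for $\ce(\cd)$ as the substantive step all match Enomoto's approach. One small caution: your claim that the kernel--cokernel pair $\delta$ realizing a given $M\in\cc_2(\ca)$ is ``unique up to isomorphism'' is slightly too strong as stated --- what is true is that the projective resolution~(\ref{proj_res_contra}) is unique up to the usual Schanuel-type ambiguity (adding split summands), which is enough for the argument but should be phrased as uniqueness up to direct sums with split pairs. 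Also, in verifying [E1$^{\op}$] for $\ce(\cd)$ you will need not just condition (a) but also that $\cd\subseteq\cc_2(\ca)$, so that the composite deflation has a kernel in $\ca$; you allude to this but it deserves emphasis, since (a) alone only places the defect in $\cd$ as a module, not as a member of $\cc_2(\ca)$.
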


By combining these arguments, we can give a simpler classification.
 
\begin{theorem} \label{classif_general:body}
The lattice of exact structures on an idempotent complete additive category $\cA$ is isomorphic to the lattice of Serre subcategories of $\eff \ce^{\max}$ and to the lattice of Serre subcategories of $\eff (\ce^{\max})^{\op}$. 
\end{theorem}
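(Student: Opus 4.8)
The plan is to leverage Theorem \ref{Enomoto_classif} together with Lemma \ref{eff_inclusion} and Proposition \ref{abelian_Serre} to identify both lattices with the lattice of Serre subcategories of a single abelian category. First I would recall that, by Theorem \ref{Enomoto_classif}, the assignment $\ce \mapsto \eff\ce$ is a bijection between exact structures on $\ca$ and the subcategories $\cd \subseteq \cc_2(\ca)$ satisfying conditions (a) and (b). By Lemma \ref{eff_inclusion}, this bijection is an isomorphism of posets: $\ce' \subseteq \ce$ if and only if $\eff\ce' \subseteq \eff\ce$. Since the maximal exact structure $\ce^{\max}$ corresponds to the top of the lattice, $\eff(\ce^{\max})$ is the largest subcategory occurring this way, and every $\eff\ce$ is a subcategory of it. By Proposition \ref{abelian_Serre}(i), $\eff(\ce^{\max})$ is an abelian category, closed under kernels, cokernels and extensions in $\Mod\ca$.

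The heart of the argument is to show that, \emph{inside the abelian category $\eff(\ce^{\max})$}, the subcategories of the form $\eff\ce$ are exactly the Serre subcategories. One inclusion is essentially Proposition \ref{abelian_Serre}(i)--(ii): for any $\ce$, the category $\eff\ce$ is closed under subobjects, quotients, and extensions taken in $\Mod\ca$, and since $\eff(\ce^{\max})$ is closed under kernels and cokernels in $\Mod\ca$, these operations computed in $\eff(\ce^{\max})$ agree with those in $\Mod\ca$; hence $\eff\ce$ is a Serre subcategory of $\eff(\ce^{\max})$. For the converse, given a Serre subcategory $\cS$ of $\eff(\ce^{\max})$, I would check that $\cS$ satisfies Enomoto's conditions (a) and (b), so that $\cS = \eff\ce$ for the exact structure $\ce = \ce(\cS)$. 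Condition (a) is immediate: a Serre subcategory is by definition closed under subobjects, quotients and extensions, and using Proposition \ref{abelian_Serre}(ii) one upgrades this from short exact sequences of objects of $\eff(\ce^{\max})$ to short exact sequences of arbitrary finitely generated $\ca$-modules whose outer terms lie in $\cS$ (the middle term is automatically $\ce^{\max}$-effaceable, and finitely generated, hence lies in $\eff(\ce^{\max})$). Condition (b) is the dual statement; here one uses the Auslander duality $W \colon (\eff(\ce^{\max})^{\op})^{\op} \to \eff(\ce^{\max})$ from \eqref{dualities} together with $V$, which are mutually inverse exact equivalences, to transport the Serre property of $\cS$ to the Serre property of $\Ext^2(\cS, \Hom(-,\ce)) = V(\cS) \subseteq \eff(\ce^{\max})^{\op}$, and then apply the dual of Proposition \ref{abelian_Serre}(ii).

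Combining the two inclusions, $\ce \mapsto \eff\ce$ restricts to an order-preserving bijection, with order-preserving inverse $\cS \mapsto \ce(\cS)$, between exact structures on $\ca$ and Serre subcategories of $\eff(\ce^{\max})$; this proves the first isomorphism. For the second, I would repeat verbatim the same argument with left modules in place of right modules, noting that $\ce \mapsto \eff\ce^{\op}$ is also a poset isomorphism by Lemma \ref{eff_inclusion}, and that $\eff(\ce^{\max})^{\op}$ is abelian by the dual of Proposition \ref{abelian_Serre}(i); alternatively, one invokes the exact equivalences $V, W$ of \eqref{dualities} directly to get an anti-equivalence $\eff(\ce^{\max}) \simeq \eff(\ce^{\max})^{\op}$ which induces an isomorphism on lattices of Serre subcategories.

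The main obstacle I anticipate is the careful verification of condition (b) for an arbitrary Serre subcategory $\cS$: the condition is phrased in terms of $\Ext^2(\cS, \Hom(-, \ce))$ and short exact sequences of finitely generated \emph{left} modules, so one must make sure the Auslander duality interacts correctly with finite generation and with the passage from $\Mod\ca$ to the abelian subcategory $\eff(\ce^{\max})$. Everything else is a matter of assembling Enomoto's results with Lemma \ref{eff_inclusion} and Proposition \ref{abelian_Serre}, which is why the theorem can be regarded as a repackaging rather than a genuinely new result.
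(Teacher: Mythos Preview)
Your proposal is correct and follows essentially the same approach as the paper's proof: both directions use Lemma~\ref{eff_inclusion} and Proposition~\ref{abelian_Serre} to show $\eff\ce$ is Serre in $\eff\ce^{\max}$, and then verify Enomoto's conditions (a) and (b) for an arbitrary Serre subcategory via Proposition~\ref{abelian_Serre}(ii) and the $\Ext^2$-duality of~\eqref{dualities}. Your write-up is somewhat more explicit than the paper's (which compresses the verification of (a) and (b) into two sentences), and your anticipated obstacle about condition (b) is exactly the point the paper handles by invoking that $V$ and $W$ are mutually inverse exact equivalences carrying Serre subcategories to Serre subcategories.
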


\begin{proof}
Let $\ce$ be an exact structure on $\ca$. By Lemma \ref{eff_inclusion}, we have $\eff \cE \subseteq \eff \cE^{\max}$. By Proposition \ref{abelian_Serre}(i), both $\eff \cE$ and $\eff \cE^{\max}$ are abelian full extension-closed subcategories in $\Mod \cA.$ All functors in $\eff \cE^{\max}$ and, in particular, in $\eff \cE$ are finitely generated. Then by Proposition \ref{abelian_Serre}(ii) it immediately follows that $\eff \cE$ is a Serre subcategory of $\eff \ce^{\max}.$

Let $\cd$ be a Serre subcategory of $\ce^{\max}.$ By the $\Ext^2-$duality, its $\Ext^2-$dual is a Serre subcategory of $\eff {\cE^{\max}}^{\op}$. Since all functors in $\eff \ce^{\max}$ are in $\cC_2(\cA)$ and finitely generated, so are all functors in $\cd.$ The dual holds for the $\Ext^2-$dual of $\cd.$ Then by Theorem \ref{Enomoto_classif}, $\ce(\cd)$ is an exact structure on $\cA.$
\end{proof}

\begin{corollary} \label{classif_interval}
Let $\cE$ be an exact structure on an idempotent complete additive category $\cA.$ The interval $[\ce^{\add}, \ce]$ in the lattice of exact structures on $\cA$ is isomorphic to the lattice of Serre subcategories of $\eff \ce$ and to the lattice of Serre subcategories of $\eff \ce^{\op}.$
\end{corollary}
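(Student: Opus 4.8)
The plan is to read this off from Theorem \ref{classif_general:body}. First I would note that, because $\ce^{\add}$ is the least element of the lattice of exact structures on $\cA$, the interval $[\ce^{\add}, \ce]$ is nothing but the set of exact structures $\ce'$ on $\cA$ with $\ce' \subseteq \ce$. By Lemma \ref{eff_inclusion} the assignment $\ce' \mapsto \eff \ce'$ sends the relation $\ce' \subseteq \ce$ to $\eff \ce' \subseteq \eff \ce$, so, combined with the bijection of Theorem \ref{classif_general:body}, it restricts to an order isomorphism from $[\ce^{\add},\ce]$ onto the poset of Serre subcategories $\cd$ of $\eff \ce^{\max}$ that satisfy $\cd \subseteq \eff \ce$. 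Here $\eff \ce$ is itself such a subcategory (Lemma \ref{eff_inclusion} together with Proposition \ref{abelian_Serre}), so this poset is the full interval $[0, \eff \ce]$ in $\operatorname{Serre}(\eff \ce^{\max})$, its bottom being $\eff \ce^{\add} = 0$.

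The second ingredient is the elementary fact that for a Serre subcategory $\cb$ of an abelian category $\cm$, a full subcategory of $\cm$ contained in $\cb$ is a Serre subcategory of $\cm$ precisely when it is a Serre subcategory of $\cb$; equivalently, the interval $[0,\cb]$ in $\operatorname{Serre}(\cm)$ coincides with $\operatorname{Serre}(\cb)$. This holds because $\cb$, being an extension-closed full abelian subcategory of $\cm$, has its kernels, cokernels and extensions computed as in $\cm$, so the closure conditions relative to $\cb$ and relative to $\cm$ agree. Applying this to $\cb = \eff\ce \subseteq \cm = \eff\ce^{\max}$ and composing with the isomorphism of the previous paragraph yields $[\ce^{\add},\ce] \cong \operatorname{Serre}(\eff\ce)$.

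For the variant with $\eff\ce^{\op}$, I would run the same two steps with the left-module / $\op$ versions of Lemma \ref{eff_inclusion}, Proposition \ref{abelian_Serre} and Theorem \ref{classif_general:body}; alternatively, it is immediate from the $\Ext^2$-duality recalled above, since the mutually inverse contravariant equivalences $V\colon (\eff\ce)^{\op}\to \eff\ce^{\op}$ and $W\colon (\eff\ce^{\op})^{\op}\to\eff\ce$ preserve the (self-dual) notion of Serre subcategory and all inclusions, hence induce an isomorphism of lattices $\operatorname{Serre}(\eff\ce)\cong\operatorname{Serre}(\eff\ce^{\op})$.

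I do not expect a serious obstacle; the proof is essentially formal. The two points deserving a line of care are that the bijection of Theorem \ref{classif_general:body} is a lattice isomorphism and not merely a bijection of underlying sets — which follows since it is an order isomorphism and both sides are lattices, so meets and joins are preserved automatically — and the stability statement of the second paragraph, i.e.\ that \emph{being a Serre subcategory} is intrinsic once one already works inside a Serre subcategory.
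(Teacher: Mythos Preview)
Your argument is correct and is precisely the intended derivation: the paper states this corollary without proof, treating it as immediate from Theorem \ref{classif_general:body}, and your reduction via Lemma \ref{eff_inclusion} together with the observation that Serre subcategories of a Serre subcategory coincide with those of the ambient category contained in it is exactly how one unpacks that immediacy. The $\Ext^2$-duality remark for the $\eff\ce^{\op}$ statement is also what the paper has in mind.
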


\begin{remark}
If the class of all kernel-cokernel pairs forms an exact structure, it is then the maximal one. This happens, in particular, when $\ca$ is abelian or, more generally, quasi-abelian (\cite{Schneiders}, see also \cite[Section 4]{Buh}). In this case, $\eff \ce^{\max} = \cc_2(\ca),$ and we recover \cite[Lemma 2.12]{Enomoto1}: the lattice of exact structures on $\ca$ is isomorphic to the lattice of Serre subcategories of $\cc_2(\ca)$.
\end{remark}

If $\ce$ has enough projectives, the ideal quotient of $\ca$ by all the morphisms that factor through projective objects is called the \emph{projectively stable} category of $\ce$ and denoted by $\underline{\ce}.$ 
Dually one defines the \emph{injectively stable}, or the \emph{costable} category of $\ce$ denoted by $\overline{\ce}.$ 

\begin{lemma} \cite[Lemma 2.13]{Enomoto1}
If $\ce$ is an exact structure with enough projectives, then $\eff \ce \overset\sim\to \fp({\underline{\ce}}^{\op}, \Ab).$
\end{lemma}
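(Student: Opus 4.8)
The plan is to establish the equivalence $\eff \ce \overset\sim\to \fp({\underline{\ce}}^{\op}, \Ab)$ by exhibiting the functor that sends a defect to its restriction along the projection $\ca \to \underline{\ce}$, and checking it is well-defined and an equivalence. First I would set up notation: since $\ce$ has enough projectives, every object $C \in \ca$ admits a deflation $P \twoheadrightarrow C$ with $P$ projective, hence a conflation $\Omega C \rightarrowtail P \twoheadrightarrow C$; call it $\delta_C$. The contravariant defect $\delta_C^{\sharp}$ has projective resolution \eqref{proj_res_contra}, namely $0 \to \Hom(-,\Omega C) \to \Hom(-,P) \to \Hom(-,C) \to \delta_C^{\sharp} \to 0$. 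The key observation is that an $\ce$-effaceable functor $M$ vanishes on all projective objects: indeed if $P$ is projective and $w \in M(P)$, there is a deflation $g: E \twoheadrightarrow P$ with $(Mg)(w) = 0$, but $g$ splits, so $Mg$ is a split monomorphism and $w = 0$. Dually, every morphism $f$ factoring through a projective satisfies $Mf = 0$, because $f = g h$ with $g$ having projective source forces $Mf = (Mh)(Mg)$ and $Mg$ kills everything in the image. Therefore $M$ factors uniquely through the projection $\ca^{\op} \to \underline{\ce}^{\op}$, giving a well-defined functor $R: \eff \ce \to \Mod \underline{\ce}$.

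Next I would show the essential image of $R$ lands in $\fp(\underline{\ce}^{\op}, \Ab)$ and that $R$ is fully faithful and essentially surjective onto it. For finite presentation: applying $\Hom_{\underline{\ce}}(-,X)$-style reasoning, from the conflation $\delta_C$ one gets that in $\underline{\ce}$ the object $C$ has ``syzygy'' $\Omega C$, and the representable $\underline{\ce}(-, C)$ sits in a presentation coming from $\Hom(-,P) \to \Hom(-,C)$; one checks that $R(\delta_C^{\sharp})$ is the cokernel of $\underline{\ce}(-,\Omega C) \to \underline{\ce}(-,C)$, hence finitely presented. Conversely, given any finitely presented $\underline{\ce}$-module $N$, pull it back to a functor on $\ca^{\op}$ vanishing on projectives; using a presentation $\underline{\ce}(-,Y) \to \underline{\ce}(-,X) \to N \to 0$ and lifting the map $Y \to X$ in $\underline{\ce}$ to a morphism in $\ca$, one realizes $N$ (pulled back) as a defect — here one must invoke that effaceability can be read off from Proposition \cite[Proposition A.1]{Enomoto2} together with the form of projective resolutions, or directly check the effaceability condition using that deflations onto $X$ exist with projective source. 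Full faithfulness of $R$ follows because a natural transformation between functors that both vanish on projectives is the same thing whether considered over $\ca$ or over $\underline{\ce}$ — the Hom-sets are literally equal — and $\eff \ce$ is a full subcategory of $\Mod \ca$.

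The main obstacle I anticipate is the essential surjectivity, specifically verifying that an arbitrary finitely presented $\underline{\ce}$-module, when pulled back to $\ca^{\op}$, is genuinely $\ce$-effaceable (equivalently, lies in $\cc_2(\ca)$ and satisfies the defect condition) rather than merely being a finitely presented $\ca$-module that happens to kill projectives. This requires carefully using the existence of enough projectives to manufacture, for each such module $N$ and each pair $(W, w)$ with $w \in N(W)$, an honest deflation $g: E \twoheadrightarrow W$ in $\ce$ with $(Ng)(w) = 0$; the natural candidate is a deflation from a projective $P \twoheadrightarrow W$, and one uses that $N$ vanishes on $P$. Keeping track of the two presentations — the one over $\underline{\ce}$ and the induced projective resolution over $\ca$ — and checking they are compatible under the projection functor is the technical heart; everything else is formal. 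Since this is essentially \cite[Lemma 2.13]{Enomoto1}, I would also remark that the dual statement $\eff \ce^{\op} \overset\sim\to \fp(\overline{\ce}^{\op}, \Ab)$ holds when $\ce$ has enough injectives, by applying the above to $\ce^{\op}$ and using the $\Ext^2$-duality \eqref{dualities}.
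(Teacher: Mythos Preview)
The paper does not prove this lemma at all: it is simply quoted as \cite[Lemma 2.13]{Enomoto1} and immediately used. There is therefore no ``paper's own proof'' to compare against.

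Your proposal is correct and follows the standard route. The argument that effaceable functors vanish on projectives (and hence on morphisms factoring through them) is right, and this immediately gives the fully faithful functor $R: \eff \ce \to \Mod \underline{\ce}$. For finite presentation of $R(M)$, note that from any conflation $A \rightarrowtail B \overset{g}\twoheadrightarrow C$ with $M = \delta^{\sharp}$, the induced sequence $\underline{\ce}(-,B) \to \underline{\ce}(-,C) \to M \to 0$ is exact, since every morphism to $C$ factoring through a projective lifts through the deflation $g$; this is slightly cleaner than specializing to the syzygy conflation. For essential surjectivity your idea is right, but make explicit the step you flag as the obstacle: given a presentation $\underline{\ce}(-,Y) \overset{\underline{f}}\to \underline{\ce}(-,X) \to N \to 0$ and a lift $f: Y \to X$, choose a deflation $p: P \twoheadrightarrow X$ with $P$ projective; then $(f, p): Y \oplus P \to X$ is a deflation (since $(f,p)\circ \iota_2 = p$ is one and $\ca$ is idempotent complete), and $\pi^* N$ is precisely the contravariant defect of this conflation. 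This simultaneously establishes that $\pi^* N$ is finitely presented over $\ca$ and effaceable, which is the point you were worried about.
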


\begin{corollary} (cf. \cite[Corollary 2.14]{Enomoto1})
If $\ce$ is an exact structure with enough projectives, then the interval $[\ce^{\add}, \ce]$ is isomorphic to the lattice of Serre subcategories of the category $\fp({\underline{\ce}}^{\op}, \Ab).$
\end{corollary}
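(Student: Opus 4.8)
The plan is to deduce this directly from Corollary \ref{classif_interval} together with the lemma of Enomoto \cite[Lemma 2.13]{Enomoto1} recalled just above. First I would invoke Corollary \ref{classif_interval}: since $\ca$ is idempotent complete (our standing assumption), the interval $[\ce^{\add},\ce]$ in the lattice of exact structures on $\ca$ is isomorphic to the lattice of Serre subcategories of the abelian category $\eff\ce$. Next, using the hypothesis that $\ce$ has enough projectives, \cite[Lemma 2.13]{Enomoto1} supplies an equivalence of abelian categories $\eff\ce \overset\sim\to \fp({\underline{\ce}}^{\op},\Ab)$; in particular $\fp({\underline{\ce}}^{\op},\Ab)$ is abelian, so it makes sense to speak of its lattice of Serre subcategories.

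It then remains to observe that the lattice of Serre subcategories is an invariant of an abelian category up to equivalence. Concretely, if $F\colon \cb\to\cc$ is an equivalence of abelian categories with quasi-inverse $G$, then a full subcategory $\cs\subseteq\cb$ closed under isomorphisms is a Serre subcategory if and only if the isomorphism closure of its essential image $F(\cs)$ is: closure under subobjects, quotient objects and extensions are all preserved and reflected by $F$, since these notions are defined purely in terms of short exact sequences, which $F$ and $G$ preserve. Sending $\cs$ to the isomorphism closure of $F(\cs)$ and $\ct$ to the isomorphism closure of $G(\ct)$ gives mutually inverse, inclusion-preserving bijections between the posets of Serre subcategories of $\cb$ and of $\cc$, hence a lattice isomorphism. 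Applying this to the equivalence $\eff\ce \overset\sim\to \fp({\underline{\ce}}^{\op},\Ab)$ and composing with the isomorphism of Corollary \ref{classif_interval} yields the claimed isomorphism $[\ce^{\add},\ce] \cong$ the lattice of Serre subcategories of $\fp({\underline{\ce}}^{\op},\Ab)$.

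I do not expect any genuine obstacle: the statement is a formal corollary of the two cited results, and the proof of Corollary \ref{classif_interval} has already done the substantive work. The only point requiring (routine) verification is that every bijection in play is an order isomorphism, so that they assemble into an isomorphism of lattices rather than merely of underlying sets. This is immediate: the order on $[\ce^{\add},\ce]$ corresponds, under Corollary \ref{classif_interval} and Lemma \ref{eff_inclusion}, to inclusion of Serre subcategories of $\eff\ce$, and an equivalence of abelian categories visibly carries an inclusion of replete full subcategories to an inclusion.
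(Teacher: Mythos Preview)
Your proposal is correct and follows exactly the route the paper intends: the corollary is stated without proof precisely because it is the formal combination of Corollary \ref{classif_interval} with Enomoto's Lemma 2.13, and you have spelled out that combination carefully (including the routine check that equivalences of abelian categories induce lattice isomorphisms on Serre subcategories).
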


In particular, this recovers Buan's classification of exact structures on the category  $\mod \Lambda$ for an Artin algebra $\Lambda$ \cite[Proposition 3.3.2]{Buan}. Note that Buan was working in terms of closed additive sub-bifunctors of $\Ext^1_{\Lambda}(-, -)$, cf. Section 1.3.

Dually, if $\ce$ is an exact structure with enough injectives, then the interval $[\ce^{\add}, \ce]$ is isomorphic to the lattice of Serre subcategories of the category $\fp({\overline{\ce}}^{\op}, \Ab).$

\subsection{AR conflations and admissible exact categories} \label{AR+admissible}

Recall the basic notions of the Auslander-Reiten (AR) theory in the setting of exact categories. We follow \cite[Section 9]{GabrielRoiter}, see also \cite{ARIII, ARS}. Throughout this section, we assume that $\ca$ is a Krull-Schmidt (additive) category. Let $\cj$
be its Jacobson radical. For an object $X \in \cA$, we set $S_X := \Hom(-, X)/\cj(-, X).$ It is well-known that the map $X \to S_X$ is a bijection between $\Ind(\ca)$ and the set of isomorphism classes of simple right $\ca-$modules (see e.g. \cite{Aus2}). The dual statement holds for the left modules $S^X := \Hom(X, -)/\cj(X, -).$

Let $X, Z$ be indecomposable objects in $\ca.$ Let $\delta: X \overset{f}\rightarrowtail Y \overset{g}\twoheadrightarrow Z$ 
be a conflation in $\ce$. By abuse of notation, we also denote by $\delta$ its class in $\Ext^1_{\ce}(Z, X).$

\begin{theorem} \label{AR:def/thm}
\cite[Theorem 9.3]{GabrielRoiter}
The following are equivalent.
\begin{itemize}
    \item 
    The sequence
    $$0 \to \Hom(-, X) \overset{\Hom(-, f)}\longrightarrow \Hom(-, Y) \overset{\Hom(-, g)}\longrightarrow \Hom(-, Z) \to S_Z \to 0
$$
is the minimal projective resolution of $S_Z$;
    \item
    $g$ is not a retraction and every morphism in $\cj(-, Z)$ factors through $g$;
    \item
    The sequence
    $$0 \to \Hom(Z, -) \overset{\Hom(g, -)}\longrightarrow \Hom(Y, -) \overset{\Hom(f, -)}\longrightarrow \Hom(X, -) \to S^X \to 0
$$
is the minimal projective resolution of $S^X$;
    \item
    $f$ is not a section and every morphism in $\cj(X, -)$ factors through $f$.
\end{itemize}
The conflation $\delta,$ if exists, is uniquely determined (up to isomorphism) by the object $X$. It is also uniquely determined by the object $Z$.
\end{theorem}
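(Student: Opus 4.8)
The plan is to prove the equivalence by establishing a cycle of implications together with the standard AR-duality arguments, then to deduce the uniqueness statements. The four conditions split naturally into two ``contravariant'' statements (the first two bullets, concerning $\Hom(-,Z)$ and $g$) and two ``covariant'' statements (the last two bullets, concerning $\Hom(X,-)$ and $f$). First I would show the equivalence of the two contravariant conditions; the two covariant ones then follow by a completely dual argument applied to $\ca^{\op}$. Finally I would connect the contravariant and covariant sides using the $\Ext^2$-duality $W$ (or directly via the structure of $\delta^{\sharp}$ and $\delta_{\sharp}$).

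For the equivalence of the first two bullets: the conflation $\delta$ always gives the exact sequence
\begin{equation*}
0 \to \Hom(-, X) \overset{\Hom(-, f)}\longrightarrow \Hom(-, Y) \overset{\Hom(-, g)}\longrightarrow \Hom(-, Z) \to \delta^{\sharp} \to 0,
\end{equation*}
which is a projective resolution of $\delta^{\sharp}$ since representable functors are projective. So the first bullet is equivalent to saying $\delta^{\sharp} \cong S_Z$ \emph{and} that this projective resolution is minimal, i.e. that $\Hom(-,f)$ and $\Hom(-,g)$ have images inside the radical of the respective representable functors. Now $\delta^{\sharp} = \coker \Hom(-,g)$ is a quotient of $\Hom(-,Z)$, and $S_Z = \Hom(-,Z)/\cj(-,Z)$ is the unique simple (hence largest semisimple) quotient of the local-endomorphism-ring projective $\Hom(-,Z)$. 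The condition that $g$ is not a retraction says precisely that $\delta^{\sharp} \neq 0$, i.e. $\Hom(-,g)$ is not surjective; the condition that every morphism in $\cj(-,Z)$ factors through $g$ says exactly that $\ker(\Hom(-,Z) \to \delta^{\sharp})$, which is the image of $\Hom(-,g)$, \emph{contains} $\cj(-,Z)$. Since $\im \Hom(-,g)$ is a proper submodule of $\Hom(-,Z)$ and $\cj(-,Z)$ is the unique maximal submodule (as $\End(Z)$ is local and $Z$ indecomposable), ``proper'' forces $\im\Hom(-,g) \subseteq \cj(-,Z)$ automatically, so the factorization condition is equivalent to $\im\Hom(-,g) = \cj(-,Z)$, hence to $\delta^{\sharp} \cong S_Z$. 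Minimality of the resolution is then automatic: $\im\Hom(-,g) = \cj(-,Z) \subseteq \cj \cdot \Hom(-,Z)$ handles one step, and for the other step one checks that $\Hom(-,f)\colon \Hom(-,X)\to \Hom(-,Y)$ lands in the radical, using that $f$ is then a left minimal map (if not, one could split off a summand, contradicting that $\delta^{\sharp}$ is simple with the given minimal presentation) --- this is the one spot requiring a small Krull-Schmidt bookkeeping argument. Conversely, if the first bullet holds, then $\delta^{\sharp} \cong S_Z \neq 0$ gives that $g$ is not a retraction, and $\im\Hom(-,g) = \cj(-,Z)$ gives the factorization property.

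The equivalence of the third and fourth bullets is the same argument run in $\ca^{\op}$: apply the already-proved equivalence to the conflation $\delta^{\op}\colon Z \rightarrowtail Y \twoheadrightarrow X$ in the exact category $\ce^{\op}$ on $\ca^{\op}$, noting $S^X$ in $\ca$ is $S_X$ computed in $\ca^{\op}$. To bridge the two halves, I would use the $\Ext^2$-duality from \eqref{dualities}: $W(\delta_{\sharp}) = \delta^{\sharp}$ and $V(\delta^{\sharp}) = \delta_{\sharp}$, together with the fact that $V$ and $W$ send simple functors to simple functors and are exact on the relevant abelian categories (this is standard Auslander duality, e.g. from \cite[Chapter II]{Aus3}); concretely, $W(S^X) \cong S_X$ and this identification is natural, so $\delta^{\sharp} \cong S_Z$ forces a compatible statement on $\delta_{\sharp}$. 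Alternatively, and perhaps more cleanly, I would argue directly: both the first and third bullets describe $\delta$ by the single numerical/structural condition that $\delta$ is a nonzero, non-split conflation with ``minimal'' defect, and the equivalence ``$g$ not a retraction and $\cj(-,Z)$ factors through $g$'' $\Leftrightarrow$ ``$f$ not a section and $\cj(X,-)$ factors through $f$'' can be seen as two faces of the statement that $\delta$ generates the socle of $\Ext^1_{\ce}(Z,X)$ as a bimodule over the local rings $\End(Z), \End(X)$. I expect the main obstacle to be precisely this passage between the contravariant and covariant descriptions: making sure the duality identifies $S_Z$ with $S^X$ in the right way and that minimality is preserved, since the naive ``dualize everything'' can obscure which simple module appears. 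Once the four conditions are known equivalent, uniqueness is immediate: by the first bullet $\delta$ is (the class of) a projective cover relation determined by the minimal projective resolution of $S_Z$, which depends only on $Z$; by the third bullet it depends only on $X$. Hence $\delta$, if it exists, is unique up to isomorphism and is pinned down by either end term.
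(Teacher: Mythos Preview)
The paper does not give its own proof of this statement; it is quoted verbatim from \cite[Theorem 9.3]{GabrielRoiter} as background for the Auslander--Reiten theory used later. So there is no in-paper argument to compare against, and your task is really to reconstruct the classical proof.

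Your outline is the standard one and is essentially correct. The equivalences (1)$\Leftrightarrow$(2) and (3)$\Leftrightarrow$(4) are handled well: the key observation that $\im\Hom(-,g)$ is a proper submodule of $\Hom(-,Z)$, hence automatically contained in the unique maximal submodule $\cj(-,Z)$, is exactly right, and the minimality at the first step follows. For the minimality of $\Hom(-,f)$ you can be more direct than you are: since $X$ is indecomposable and the conflation is non-split, $f$ is not a split monomorphism, and in a Krull--Schmidt category a morphism out of an indecomposable is radical precisely when it is not a split monomorphism. No bookkeeping with summands is needed.

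There is one genuine slip in the bridging step. You write $W(S^X)\cong S_X$, but this is not what the duality gives. One has $W(\delta_\sharp)=\delta^\sharp$; if $\delta_\sharp\cong S^X$ then $\delta^\sharp=W(S^X)$ is a simple right module which is a quotient of $\Hom(-,Z)$, hence $W(S^X)\cong S_Z$, not $S_X$. The argument still works: the point is only that the $\Ext^2$-duality $V,W$ of \eqref{dualities} are mutually inverse equivalences $(\eff\ce)^{\op}\simeq\eff\ce^{\op}$, so they carry simples to simples; since $\delta^\sharp$ is a quotient of $\Hom(-,Z)$ and $\delta_\sharp$ is a quotient of $\Hom(X,-)$, simplicity on one side forces $\delta^\sharp\cong S_Z$ and on the other $\delta_\sharp\cong S^X$. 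Once you correct this, your bridge (2)$\Leftrightarrow$(4) via the duality is clean and is in fact the argument Gabriel--Roiter use. Your ``alternative'' paragraph about socles of $\Ext^1$ is suggestive but not a proof as stated; I would drop it. The uniqueness claim at the end is fine.
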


Conflations $\delta$ satisfying the equivalent conditions of Theorem \ref{AR:def/thm} are called \emph{almost split}, or \emph{Auslander-Reiten} (AR) conflations.
When $\Ext^1(Z, X)$ contains an AR conflation, one uses the notation $X = \tau Z$ and $Z = \tau^{-1} X.$

We say that $\ce$ has AR conflations if for every indecomposable non-projective object $Z$ there exists an AR conflation ending at $Z$, and for every indecomposable non-injective object $X$ there exists an AR conflation starting at $X$.

\begin{proposition} \label{Prop:Enomoto2:2.3}
\cite[Proposition 2.3]{Enomoto2}
Let $\ca$ be a Krull-Schmidt additive category, $Z$ be an indecomposable object and $\ce$ an exact structure on $\ca$. Then the following are equivalent.
\begin{itemize}
\item[(i)] There is an $\AR$ conflation in $\ce$ ending at $Z$;
\item[(ii)] $S_Z$ is in $\eff \ce$;
\item[(iii)] $S_Z$ is finitely presented and $Z \notin \cp(\ce).$
\end{itemize}
\end{proposition}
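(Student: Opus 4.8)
The plan is to prove the chain of equivalences (i) $\Leftrightarrow$ (ii) $\Leftrightarrow$ (iii) by relating the AR conflation ending at $Z$ to the minimal projective resolution of the simple module $S_Z$, using the classification of effaceable functors provided by Theorem \ref{Enomoto_classif} and Proposition \ref{abelian_Serre}.

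For (i) $\Rightarrow$ (ii): Suppose $\delta : X \rightarrowtail Y \twoheadrightarrow Z$ is an AR conflation in $\ce$. By Theorem \ref{AR:def/thm}, the minimal projective resolution of $S_Z$ is exactly the sequence
$$0 \to \Hom(-, X) \to \Hom(-, Y) \to \Hom(-, Z) \to S_Z \to 0,$$
which is nothing but the projective resolution \eqref{proj_res_contra} of the contravariant defect $\delta^{\sharp}$. Hence $S_Z \cong \delta^{\sharp}$, and by the description of $\eff \ce$ as the category of contravariant defects of conflations (Proposition \cite[Proposition A.1]{Enomoto2}), $S_Z \in \eff \ce$.

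For (ii) $\Rightarrow$ (iii): If $S_Z \in \eff \ce$, then in particular $S_Z$ is finitely presented (every object of $\eff \ce$ admits a projective resolution of the form \eqref{proj_res_contra}, hence is finitely presented, being a cokernel of a map of representables). Moreover $S_Z \neq 0$, so it cannot be effaceable with respect to the split structure; concretely, if $Z$ were projective in $\ce$, the identity $1_Z$ would give a deflation through which nothing nonzero in $S_Z(Z) = \Hom(Z,Z)/\cj(Z,Z)$ can vanish, contradicting $\ce$-effaceability. Thus $Z \notin \cp(\ce)$.

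For (iii) $\Rightarrow$ (i): Assume $S_Z$ is finitely presented and $Z \notin \cp(\ce)$. Since $S_Z$ is finitely presented and simple, its minimal projective resolution has the shape of Theorem \ref{AR:def/thm}, namely $0 \to \Hom(-,X) \to \Hom(-,Y) \to \Hom(-,Z) \to S_Z \to 0$ for some $X, Y \in \ca$; this already produces a kernel-cokernel pair $X \rightarrowtail Y \twoheadrightarrow Z$ in $\ca$ whose contravariant defect is $S_Z$. It remains to show this pair is a conflation in $\ce$. Here I would invoke Theorem \ref{Enomoto_classif}: $\ce$ corresponds to a subcategory $\cd = \eff \ce$ of $\cc_2(\ca)$ closed under subobjects, quotients and extensions among finitely generated modules, and the kernel-cokernel pair lies in $\ce$ iff its defect $S_Z$ lies in $\cd$. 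So I must show $S_Z \in \eff \ce$. Since $Z \notin \cp(\ce)$, by definition $\Ext^1_\ce(Z, B) \neq 0$ for some $B$, so there exists a non-split conflation $\eta : B \rightarrowtail E \twoheadrightarrow Z$; its defect $\eta^{\sharp}$ is a nonzero object of $\eff \ce$ with top $S_Z$ (because $\eta^{\sharp}$ is a nonzero quotient of $\Hom(-,Z)$, and $\Hom(-,Z)$ has simple top $S_Z$). The kernel of $\eta^{\sharp} \twoheadrightarrow S_Z$ is finitely generated (being a quotient of $\Hom(-,E)$), so by Proposition \ref{abelian_Serre}(ii) applied to $0 \to K \to \eta^{\sharp} \to S_Z \to 0$, both $K$ and $S_Z$ lie in $\eff \ce$. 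This gives $S_Z \in \eff \ce$, hence the kernel-cokernel pair is a conflation, and minimality together with Theorem \ref{AR:def/thm} identifies it as the AR conflation ending at $Z$.

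The main obstacle is the last implication, specifically extracting from the mere non-projectivity of $Z$ a conflation whose defect \emph{has $S_Z$ as a quotient}, and then using the Serre (sub/quotient-closure) property of $\eff \ce$ inside $\Mod \ca$ — via Proposition \ref{abelian_Serre}(ii) — to conclude $S_Z$ itself is effaceable. The finite presentation hypothesis on $S_Z$ is exactly what guarantees that the ambient projective resolution has the two-term representable shape needed to produce the kernel-cokernel pair $X \rightarrowtail Y \twoheadrightarrow Z$ in the first place; without it, $S_Z$ need not come from a kernel-cokernel pair at all.
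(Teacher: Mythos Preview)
The paper does not give its own proof of this proposition; it merely cites \cite[Proposition 2.3]{Enomoto2}. So there is nothing to compare against, and I will evaluate your argument on its own.

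Your overall strategy is the right one and matches Enomoto's, but there are two genuine gaps in the implication (iii) $\Rightarrow$ (i).

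First, you assert that ``since $S_Z$ is finitely presented and simple, its minimal projective resolution has the shape of Theorem~\ref{AR:def/thm}'', i.e.\ is of length two with representable terms. This does not follow from finite presentation alone: finite presentation gives you $\Hom(-,Y)\to\Hom(-,Z)\to S_Z\to 0$, but says nothing about the second syzygy being representable. The correct order is to \emph{first} establish $S_Z\in\eff\ce$ (which you do later in the same paragraph), and \emph{then} deduce that $S_Z$ is the defect of some conflation, hence has a length-two representable resolution; taking the minimal direct summand of that conflation in the idempotent complete category $\ca$ then yields the AR conflation. Your argument works once reordered, but as written the first step is circular.

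Second, in showing $S_Z\in\eff\ce$ you claim the kernel $K$ of $\eta^{\sharp}\twoheadrightarrow S_Z$ is finitely generated ``being a quotient of $\Hom(-,E)$''. This is not correct: since $\eta$ is non-split and $Z$ is indecomposable, the deflation $p:E\twoheadrightarrow Z$ lies in $\cj(E,Z)$, so $\Imm\Hom(-,p)\subset\cj(-,Z)$ and hence $K\cong\cj(-,Z)/\Imm\Hom(-,p)$ is a quotient of $\cj(-,Z)$, not of $\Hom(-,E)$. The conclusion is still right, because $\cj(-,Z)$ is finitely generated precisely by the hypothesis that $S_Z=\Hom(-,Z)/\cj(-,Z)$ is finitely presented --- and this is exactly where condition (iii) enters. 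With this correction, Proposition~\ref{abelian_Serre}(ii) applies and the rest of your argument goes through.
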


Simple objects in $\eff \ce$ are precisely the simple right $\ca-$modules that belong to $\eff \ce$. An object in $\eff \ce$ has finite length in $\eff \ce$ if and only if it has finite length in $\Mod \ca$, and the composition series are the same in this case. (\cite[Proposition A.3]{Enomoto2}). The dual of the statement of Proposition \ref{Prop:Enomoto2:2.3} holds for the left modules $S^Z$ in $\eff \ce^{\op}.$

\begin{corollary} \label{AR:simple}
\begin{itemize}
    \item[(i)] The map $Z \to S_Z$ is a bijection between the set $\Ind(\ca) \backslash \ind(\cp(\ce))$ and the set of isomorphism classes of simple objects in $\eff \ce;$
    \item[(ii)] The simple objects $S_Z$ in $\eff \ce$ are precisely the contravariant defects of $\AR-$conflations in $\ce$;
    \item[(iii)] The map $Z \to S^Z$ is a bijection between the set $\Ind(\ca) \backslash \ind(\cp(\ce))$ and the set of isomorphism classes of simple objects in $\eff \ce^{\op};$
    \item[(iv)] The simple objects $S_Z$ in $\eff \ce^{\op}$ are precisely the covariant defects of $\AR-$conflations in $\ce$.
\end{itemize}
\end{corollary}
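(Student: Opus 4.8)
The statement to be proven is Corollary \ref{AR:simple}, which identifies (via the assignments $Z \mapsto S_Z$ and $Z \mapsto S^Z$) the indecomposables of $\ca$ that are not projective in $\ce$ with the simple objects of $\eff \ce$ and of $\eff \ce^{\op}$, and further identifies these simples with the contravariant, resp.\ covariant, defects of the AR-conflations in $\ce$.

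Let me sketch how I would prove it.

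The whole corollary is essentially a packaging of Proposition \ref{Prop:Enomoto2:2.3} together with Theorem \ref{AR:def/thm} and the cited fact (from \cite[Proposition A.3]{Enomoto2}) that the simple objects of $\eff \ce$ are exactly the simple right $\ca$-modules lying in $\eff \ce$. So the plan is to assemble these pieces.

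For (i): by \cite{Aus2} (recalled just before Theorem \ref{AR:def/thm}), the map $X \mapsto S_X$ is already a bijection between $\Ind(\ca)$ and the isomorphism classes of simple right $\ca$-modules. Restricting this bijection, it suffices to show that for indecomposable $Z$, the simple module $S_Z$ lies in $\eff \ce$ if and only if $Z \notin \ind(\cp(\ce))$. One direction and a finiteness subtlety are handled by the equivalence (ii)$\Leftrightarrow$(iii) of Proposition \ref{Prop:Enomoto2:2.3}: $S_Z \in \eff \ce$ iff $S_Z$ is finitely presented and $Z \notin \cp(\ce)$. The point I must be careful about is that a priori $S_Z$ need not be finitely presented; here is where I would invoke the full strength of Proposition \ref{Prop:Enomoto2:2.3}, together with the observation (made explicitly in the paragraph following it) that a module has finite length in $\eff \ce$ iff it does in $\Mod \ca$ — but actually for the bijection statement I only need: every simple object of $\eff \ce$ is a simple right $\ca$-module (cited from \cite[Prop.\ A.3]{Enomoto2}), and conversely a simple right $\ca$-module in $\eff \ce$ is simple there since $\eff \ce$ is a Serre (in particular full extension-closed) subcategory by Proposition \ref{abelian_Serre}(i). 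So the simples of $\eff\ce$ are precisely the $S_Z$ that happen to lie in $\eff \ce$, and by Proposition \ref{Prop:Enomoto2:2.3} these are exactly the $S_Z$ with $Z \notin \cp(\ce)$ — note that when $Z\notin\cp(\ce)$, the existence of an AR conflation ending at $Z$ (equivalence (i)$\Leftrightarrow$(iii)) forces $S_Z$ to be finitely presented via the projective resolution in Theorem \ref{AR:def/thm}. This gives (i).

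For (ii): given $Z \notin \ind(\cp(\ce))$, Proposition \ref{Prop:Enomoto2:2.3} gives an AR conflation $\delta$ ending at $Z$, and by the first bullet of Theorem \ref{AR:def/thm} the sequence $0 \to \Hom(-,X) \to \Hom(-,Y) \to \Hom(-,Z) \to S_Z \to 0$ is the minimal projective resolution of $S_Z$; comparing with the defining projective resolution (\ref{proj_res_contra}) of the contravariant defect $\delta^{\sharp}$, we read off $\delta^{\sharp} \cong S_Z$. Conversely, the contravariant defect of any AR conflation is of this form, hence simple by (i). Thus the simple objects of $\eff \ce$ are precisely the contravariant defects of AR conflations. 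Statements (iii) and (iv) are the exact duals, obtained by applying the above to $\ce^{\op}$ (equivalently, using the third and fourth bullets of Theorem \ref{AR:def/thm} and the projective resolution (\ref{proj_res_co}) of the covariant defect $\delta_{\sharp}$), together with the dual of Proposition \ref{Prop:Enomoto2:2.3} for the left modules $S^Z$, which is asserted in the paragraph preceding the corollary.

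The main obstacle — really the only non-formal point — is keeping the finite-presentation condition under control: $S_Z$ for arbitrary indecomposable $Z$ need not be finitely presented, so one cannot blindly say ``$S_Z \in \eff \ce \Leftrightarrow Z \notin \cp(\ce)$'' without invoking that the existence of an AR conflation ending at $Z$ supplies the required finite presentation of $S_Z$ (equivalently, using the full equivalence (i)$\Leftrightarrow$(ii)$\Leftrightarrow$(iii) of Proposition \ref{Prop:Enomoto2:2.3}). Once that is handled cleanly, everything else is a direct translation between ``minimal projective resolution of a simple'' and ``projective resolution of a defect.''
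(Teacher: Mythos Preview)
Your proposal is correct and follows exactly the approach the paper intends: the corollary is stated without proof because it is meant to be an immediate consequence of Proposition~\ref{Prop:Enomoto2:2.3}, Theorem~\ref{AR:def/thm}, and the cited fact from \cite[Proposition~A.3]{Enomoto2} that simples in $\eff\ce$ coincide with simple $\ca$-modules lying in $\eff\ce$. Your unpacking of these ingredients, including the care you take with the finite-presentation subtlety, is precisely what the paper leaves implicit.
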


Let $\AR_+(\ce)$ and $\AR(\ce)$ be the sub-semigroup, resp. subgroup of $K_0^{\mathrm{add}}(\ce)$ generated by
$$\left\{[X] - [Y] + [Z]\ | \text{ there exists an AR conflation } X \rightarrowtail Y \twoheadrightarrow Z  \mbox{ in } \ce \right\}.$$
Let $\Ex_+(\ce)$ and $\Ex(\ce)$ be the sub-semigroup, resp. subgroup of $K_0^{\mathrm{add}}(\ce)$ generated by 
$$\left\{[X] - [Y] + [Z] \  | \text{ there exists a conflation } X \rightarrowtail Y \twoheadrightarrow Z \mbox{ in } \ce \right\}.$$

Enomoto \cite{Enomoto1} calls an exact structure $\cE$ \emph{admissible} if every object in $\eff \ce$ has finite length.

Let $F$ be a right or a left module over a Krull-Schmidt category $\ca$. Its \emph{support} is defined in terms of indecomposable objects in $\ca$:
$$\supp(F) := \left\{X \in \Ind(\ca) \ |  F(X) \neq 0 \right\}.$$

\begin{lemma} \label{length-support}
If $\cA$ is a $\Hom-$finite Krull-Schmidt category, then a finitely generated $\cA-$module has finite length if and only it has finite support.
\end{lemma}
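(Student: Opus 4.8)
The plan is to prove Lemma \ref{length-support} by analyzing the composition factors of a finitely generated $\ca$-module $F$ over a $\Hom$-finite Krull-Schmidt category $\ca$. Recall that the simple right $\ca$-modules are exactly the $S_X = \Hom(-,X)/\cj(-,X)$ for $X \in \Ind(\ca)$, and that $\supp(S_X) = \{X\}$ because $S_X(Y) = \Hom(Y,X)/\cj(Y,X)$, which is nonzero precisely when $Y \cong X$ (for $Y$ indecomposable, $\Hom(Y,X) = \cj(Y,X)$ unless $Y \cong X$, in which case $\End(X)/\cj(X)$ is a nonzero division ring by Krull-Schmidt). The key elementary observation is that $\supp$ is ``additive on short exact sequences'' in the weak sense that for $0 \to F_1 \to F \to F_2 \to 0$ in $\Mod\ca$ one has $\supp(F) = \supp(F_1) \cup \supp(F_2)$; this is immediate from evaluating at each indecomposable $X$ and using exactness of $0 \to F_1(X) \to F(X) \to F_2(X) \to 0$.

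First I would prove the easy direction: if $F$ has finite length, say with composition series whose factors are simples $S_{X_1}, \dots, S_{X_n}$, then by the union formula above $\supp(F) = \{X_1, \dots, X_n\}$ is finite. For the converse, suppose $F$ is finitely generated with finite support. Being finitely generated means there is an epimorphism $\Hom(-, Y) \twoheadrightarrow F$ for some $Y \in \ca$; writing $Y = \bigoplus_{i} Y_i$ into indecomposables, $\Hom(-,Y) = \bigoplus_i \Hom(-,Y_i)$, so it suffices to control each $\Hom(-,Y_i)$ and then $F$ as a quotient. The crucial finiteness input is that for an indecomposable $Y_i$, the representable functor $\Hom(-,Y_i)$, while typically of infinite length, has the property that when we restrict attention to the (finite) support of $F$, only finitely much ``mass'' survives: more precisely, the relevant bound comes from $\Hom$-finiteness, which makes each $F(X)$ a finite-dimensional (or finite-length) object. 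I would argue that a finitely generated module with finite support lies in the Serre subcategory generated by the finitely many simples $S_X$ with $X \in \supp(F)$, and then show any finitely generated module in that Serre subcategory has finite length.

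The cleanest way to make the converse rigorous is the following induction. Let $F$ be finitely generated with $\supp(F) \subseteq \{X_1, \dots, X_n\}$ finite. Since $F \neq 0$, pick some $X_j$ with $F(X_j) \neq 0$; because $\ca$ is $\Hom$-finite and Krull-Schmidt, $\End_\ca(X_j)/\cj$ is a finite-dimensional division algebra, and one can extract a simple quotient $F \twoheadrightarrow S_{X_j}$ or a simple sub $S_{X_j} \hookrightarrow F$ — concretely, a surjection $\Hom(-,Y)\twoheadrightarrow F$ restricted to an indecomposable summand $Y_k$ with $\Hom(Y_k, -) \to F$ hitting $X_j$ nontrivially yields, after composing with $F \to F/\cj F$-type constructions, a nonzero map to a simple; the radical filtration argument shows $F$ has a simple subquotient $S_{X_j}$. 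Then $\supp(F/S_{X_j}) \subseteq \supp(F)$ (or the analogous statement for a submodule), and $F/S_{X_j}$ is still finitely generated. The main obstacle is ensuring this process terminates: a priori one could peel off infinitely many copies of simples. To rule this out, I would use $\Hom$-finiteness quantitatively — define $\ell(F) := \sum_{i=1}^n \dim_{D_i} F(X_i)$ where $D_i = \End(X_i)/\cj$ (or just $\sum \dim_k F(X_i)$ if $\ca$ is $k$-linear), observe $\ell(F) < \infty$ since $F(X_i)$ is a subquotient of some finite-dimensional $\Hom$-space, and check that $\ell$ strictly drops under passage to a proper subquotient with a simple removed (since removing $S_{X_j}$ decreases $\dim F(X_j)$ by $\dim_{D_j}(D_j) $). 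Hence the process terminates in $\ell(F)$ steps, exhibiting a finite composition series, so $F$ has finite length. The one point requiring care — the main technical obstacle — is the claim that a nonzero finitely generated module over a Krull-Schmidt $\Hom$-finite category always admits a simple subquotient supported at a prescribed $X_j \in \supp(F)$; this I would handle by taking a finitely generated projective cover $\bigoplus \Hom(-,Y_k) \twoheadrightarrow F$, noting $F(X_j) \neq 0$ forces some $\Hom(Y_k, X_j) \to F(X_j)$ to be nonzero, and then using that the top $\Hom(-,Y_k)/\cj \Hom(-,Y_k) = S_{Y_k}$ together with the structure of $F$ produces the desired simple factor via a standard radical-series argument in $\Mod\ca$.
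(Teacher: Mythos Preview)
The paper does not give its own proof of this lemma; it simply cites \cite[Lemma 3.4]{KrauseVossieck} and \cite[Theorem 2.12]{Aus2}. So there is no approach to compare against, and I evaluate your argument on its own.

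Your forward direction (finite length $\Rightarrow$ finite support) is correct and clean. For the converse, the core idea --- defining $\ell(F) = \sum_{X \in \supp(F)} \dim_k F(X)$, which is finite because $F(X)$ is a quotient of the finite-dimensional space $\Hom(X,Y)$ --- is exactly right, but you then overcomplicate matters considerably. Once you observe that $\ell$ is finite, additive on short exact sequences (evaluation at each $X$ is exact), and vanishes only on the zero module, you are done: any strictly ascending or descending chain of submodules of $F$ has length at most $\ell(F)$, so $F$ is both Noetherian and Artinian, hence of finite length. There is no need to construct simple subquotients explicitly, and certainly no need to find one supported at a \emph{prescribed} $X_j \in \supp(F)$.

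That last claim --- your self-identified ``main technical obstacle'' --- is in fact a red herring, and as stated it is not obviously true: the simple quotients of $F$ are determined by its top, and there is no reason every indecomposable in $\supp(F)$ should appear there. Your sketch via ``a standard radical-series argument'' does not establish this. Separately, your inductive step has a small gap: if you peel off a simple \emph{quotient}, the kernel need not be finitely generated, so the hypothesis does not persist; if you peel off a simple \emph{sub}, the quotient is finitely generated, but you have not shown a simple sub exists without already invoking something like the $\ell$-bound. All of this disappears once you use $\ell$ directly to bound chain lengths.
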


\begin{proof}
See, e.g. \cite[Lemma 3.4]{KrauseVossieck} or \cite[Theorem 2.12]{Aus2}.
\end{proof}

We will need the following statement in Section 4.

\begin{proposition} \cite[Proposition 2.2]{Aus2}\label{supp_simple}
Let $X$ be an indecomposable object. Then the supports of the right simple module $S_X$ and the left simple module $S^X$ are both equal simply to $\left\{X\right\}.$ Moreover, we have
$$S_X(Y) = S^X(Y) = \delta_X^Y, \quad \forall Y\in \Ind(\ca).$$
\end{proposition}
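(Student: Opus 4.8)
The plan is to unwind the definition $S_X(Y) = \Hom(Y,X)/\cj(Y,X)$ and evaluate it on each indecomposable $Y$, using the standard description of the radical of a Krull--Schmidt category. The one input I would record, with a short proof, is: \emph{if $Y, X$ are indecomposable with $Y \not\cong X$, then $\cj(Y,X) = \Hom(Y,X)$, whereas $\cj(X,X) = \rad\End(X)$.} Indeed, $f \in \cj(Y,X)$ iff $1_Y - gf$ is invertible for every $g \colon X \to Y$; since $\End(Y)$ is local, this is equivalent to asking that $gf$ be a non-isomorphism for every such $g$. But if some $gf$ were an isomorphism, then $f$ would be a split monomorphism, exhibiting the nonzero object $Y$ as a direct summand of the indecomposable object $X$, forcing $Y \cong X$ --- a contradiction. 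The equality $\cj(X,X) = \rad\End(X)$ is immediate from the same locality.

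Granting this, the statement follows immediately. For indecomposable $Y \not\cong X$ one gets $S_X(Y) = \Hom(Y,X)/\cj(Y,X) = 0$, while $S_X(X) = \End(X)/\rad\End(X)$ is the residue division ring $D_X$ of the local ring $\End(X)$, which is nonzero; hence $\supp(S_X) = \{X\}$. Interpreting $\delta_X^Y$ as the right $\ca$-module concentrated at the point $X$ (with stalk $D_X$ there), this is precisely the asserted formula $S_X(Y) = \delta_X^Y$ for all $Y \in \Ind(\ca)$. The claim for $S^X = \Hom(X,-)/\cj(X,-)$ follows by applying the same argument to $\ca^{\op}$: $S^X(Y) = \Hom(X,Y)/\cj(X,Y)$ vanishes for $Y \not\cong X$ and equals $D_X$ for $Y = X$.

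I do not anticipate a real obstacle; this is a routine consequence of the Krull--Schmidt hypothesis, and in fact it is exactly \cite[Proposition 2.2]{Aus2}, which could be cited verbatim. The only delicate point is the description of radical morphisms between indecomposables --- the split-monomorphism argument above --- together with the purely cosmetic matter of how to read the symbol $\delta_X^Y$: one may leave it as a module supported at a single point (which is all that is used later in the paper), or, when the residue division rings $\End(X)/\rad\End(X)$ are canonically the ground field (e.g. over an algebraically closed field), read it literally as a Kronecker delta.
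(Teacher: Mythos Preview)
The paper does not supply its own proof of this proposition; it simply quotes the result as \cite[Proposition 2.2]{Aus2} and uses it later. Your argument is correct and is precisely the standard proof one would give: compute the radical $\cj(Y,X)$ between indecomposables via locality of endomorphism rings, deduce that $S_X(Y)$ vanishes for $Y\not\cong X$ and equals the residue division ring $D_X$ at $Y=X$, and dualise for $S^X$. Your remark on reading $\delta_X^Y$ is also apt --- in the paper's later application (Lemma~\ref{max_adm}(ii)) only the support statement and the nonvanishing at $X$ are used, and in the $\Hom$-finite setting over $k$ the value $\dim_k D_X$ is what appears.
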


We say that an additive category $\ca$ is \emph{additively finite} if it has only finitely many indecomposable objects up to isomorphism. As before, we assume that $\ca$ is $\Hom-$finite and linear over a field $k$. Then Enomoto proved in \cite{Enomoto1} that all exact structures on $\cA$ are admissible and can be classified via Auslander-Reiten (AR) theory. For the convenience of the reader, we give precise references to those results of Enomoto for additively finite categories that are important for us.

\begin{theorem} [Enomoto] \label{enomoto} 
Let $\cA$ be a $\Hom-$finite idempotent complete $k-$linear additively finite category. Let $\ce$ be an exact structure on $\cA.$ Then the following holds. 
\begin{itemize}
\item[(i)] \cite[Corollary 3.15]{Enomoto1} $\ce$ has enough projectives and enough injectives.
\item[(ii)] \cite[Corollary A.4]{Enomoto1} $\ce$ has AR conflations.
\item[(iii)] \cite[Proposition 3.8(2)]{Enomoto1}  $\ce$ is $\Ext^1-$finite.
\item[(iv)] \cite[Proposition 3.8(3)]{Enomoto1} $\ce$ is admissible.
\item[(v)] \cite[Corollary 3.10]{Enomoto1} There is a one-to-one correspondence between the exact structures on $\cA$ and the subsets of the (finite) set $\ind(\ca) \backslash \ind(\cp(\ce^{\max})).$ For an exact structure $\ce,$ the corresponding subset coincides with the set $\ind(\cp(\ce))  \backslash \ind(\cp(\ce^{\max})).$ In particular, $\ind(\cp(\ce^{\mathrm{add}})) = \ind(\ca).$
\item[(vi)]\cite[Corollary 3.18]{Enomoto1}  $\Ex_+(\ce) = \AR_+(\ce).$ 
\end{itemize}
\end{theorem}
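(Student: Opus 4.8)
Since every item of Theorem~\ref{enomoto} is a theorem of Enomoto, with precise references supplied in the statement, the plan is not to reprove these results from scratch but to lay out the logical skeleton, using freely the facts recalled above. The common engine is that for a $\Hom$-finite $k$-linear \emph{additively finite} category $\ca$, the module category $\mod\ca = \fp(\ca^{\op},\Ab)$ is a length category: by Lemma~\ref{length-support} a finitely generated $\ca$-module has finite length as soon as it has finite support, and finiteness of $\Ind(\ca)$ makes this automatic. (Equivalently, $\mod\ca$ is the category of finite-dimensional modules over the finite-dimensional algebra $\End_\ca\bigl(\bigoplus_{X\in\ind\ca}X\bigr)$.) Hence for every exact structure $\ce$ on $\ca$ the subcategory $\eff\ce\subseteq\cc_2(\ca)\subseteq\mod\ca$ is a length category, which is item~(iv).

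Item~(ii) then follows from~(iv) together with Proposition~\ref{Prop:Enomoto2:2.3} and Corollary~\ref{AR:simple}(i): the simples of $\eff\ce$ are the $S_Z$ with $Z\in\ind\ca\setminus\ind\cp(\ce)$, and each is finitely presented since $\mod\ca$ is a length category over an Artinian ring, so Proposition~\ref{Prop:Enomoto2:2.3} yields an AR conflation ending at each non-projective indecomposable $Z$; the dual argument in $\eff\ce^{\op}$ handles non-injective indecomposables. Item~(i), enough projectives and injectives, is then Enomoto's argument \cite[Cor.~3.15]{Enomoto1}: starting from a non-projective indecomposable and iterating the passage to deflations whose defect has strictly smaller length in $\eff\ce$, one terminates after finitely many steps (both $\ind\ca$ and the lengths of objects of $\eff\ce$ being finite) and assembles a projective presentation, dually for injectives. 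Item~(iii) follows because, for indecomposables $X,Z$, the group $\Ext^1_\ce(Z,X)$ is governed by the finite-length finitely presented module data of $\eff\ce$ while $\Hom_\ca(-,-)$ is finite-dimensional; this is \cite[Prop.~3.8(2)]{Enomoto1}.

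For item~(v): by Theorem~\ref{classif_general:body} exact structures on $\ca$ correspond bijectively, indeed as lattices, to Serre subcategories of $\eff\ce^{\max}$, and by the first paragraph the latter is a finite-length category whose simples, by Corollary~\ref{AR:simple}(i), are indexed by the finite set $\ind\ca\setminus\ind\cp(\ce^{\max})$. In a finite-length abelian category with finitely many simples, a Serre subcategory is exactly the full subcategory of objects all of whose composition factors lie in a prescribed subset of the simples, and every subset arises; so exact structures biject with subsets of $\ind\ca\setminus\ind\cp(\ce^{\max})$. Tracing the bijection, $\ce$ goes to $\eff\ce$, whose simples are the $S_Z$ with $Z\in\ind\ca\setminus\ind\cp(\ce)$, so the matching subset is $\ind\cp(\ce)\setminus\ind\cp(\ce^{\max})$; the empty subset corresponds to $\eff\ce=0$, i.e.\ $\ce=\ce^{\add}$, whence $\ind\cp(\ce^{\add})=\ind\ca$.

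Item~(vi): the inclusion $\AR_+(\ce)\subseteq\Ex_+(\ce)$ is trivial, and the converse is a d\'evissage on the length of the defect. One proves that for a conflation $\delta\colon X\rightarrowtail Y\twoheadrightarrow Z$ the element $[X]-[Y]+[Z]\in K_0^{\mathrm{add}}(\ce)$ is a non-negative integral combination of classes of AR conflations, by induction on the length of $\delta^{\sharp}$ in $\eff\ce$: if $\delta^{\sharp}$ is simple it is the defect of an AR conflation by Corollary~\ref{AR:simple}(ii), and comparing the projective resolutions~(\ref{proj_res_contra}) of the two conflations, which are unique up to projective summands, shows the two $K_0^{\mathrm{add}}$-classes agree; in general one chooses a simple submodule $S\hookrightarrow\delta^{\sharp}$, writes $S=\eta^{\sharp}$ for an AR conflation $\eta$ and $\delta^{\sharp}/S=\mu^{\sharp}$ for a conflation $\mu$ (both possible since $\eff\ce$ is Serre and consists of defects of conflations), and a horseshoe-lemma comparison of~(\ref{proj_res_contra}) for $\delta$ with those for $\eta$ and $\mu$ expresses $[X]-[Y]+[Z]$ as the AR class of $\eta$ plus the class attached to $\mu$, closing the induction since $\mu^{\sharp}$ has smaller length; this is \cite[Cor.~3.18]{Enomoto1}, after Auslander~\cite{Aus4} and Butler~\cite{Butler}. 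The step I expect to be the genuine obstacle is item~(i): finiteness of $\eff\ce$ alone does not formally produce enough projectives, and one really needs the AR conflations of~(ii) together with finiteness of $\ind\ca$ to terminate the construction of projective covers, plus the dual bookkeeping for injectives, which is precisely the part we import wholesale from Enomoto.
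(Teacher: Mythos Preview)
The paper does not supply a proof of Theorem~\ref{enomoto}: each item is stated with a precise citation to Enomoto~\cite{Enomoto1}, and the only commentary is the remark following the statement that (vi), though phrased by Enomoto for subgroups as $\Ex(\ce)=\AR(\ce)$, in fact holds at the level of sub-\emph{semigroups} because the argument only uses finite length of defects. Your sketch therefore goes well beyond what the paper offers, and its overall logic---pivoting on the observation that additive finiteness together with $\Hom$-finiteness forces $\mod\ca$ to be a length category, whence $\eff\ce$ has finite length and everything else cascades---is sound and is indeed the engine behind Enomoto's proofs. The d\'evissage you outline for (vi) via Schanuel and the horseshoe lemma is correct and is exactly the argument going back to Auslander~\cite{Aus4} and Butler~\cite{Butler}; your treatment of (i) and (iii) as imported citations is appropriate given that the paper does the same.

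There is one genuine slip, in your derivation of~(v). You correctly argue that Serre subcategories of the length category $\eff\ce^{\max}$ biject with subsets of its set of simples, and that the simples of the Serre subcategory $\eff\ce$ are the $S_Z$ with $Z\in\ind\ca\setminus\ind\cp(\ce)$. But you then write ``so the matching subset is $\ind\cp(\ce)\setminus\ind\cp(\ce^{\max})$'', which does not follow: the subset of simples of $\eff\ce^{\max}$ that your argument actually singles out is $\ind\ca\setminus\ind\cp(\ce)$, the \emph{complement} in $\ind\ca\setminus\ind\cp(\ce^{\max})$ of the subset named in the theorem. Both parameterisations are bijections (they differ by complementation in a finite set, and this complementation is an anti-isomorphism of Boolean lattices), so there is no error in the existence of the one-to-one correspondence, but if you want to match Enomoto's stated bijection you must insert that complementation step explicitly.
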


The last result was stated as $\Ex(\ce) = \AR(\ce),$  but the proof used only the fact that the defect of each conflation in $\ce$ has finite length, and this means that it actually applies to semigroups. While this might look like a minor distinction, it will be crucial for the proofs of our main results.

In paper \cite{Enomoto2}, Enomoto further investigated the relation between parts (iv) of (vi) of Theorem \ref{enomoto} without the finiteness assumption on the underlying additive category, i.e. between the admissibility of an exact category $\ce$ and the property $\Ex_+(\ce) = \AR_+(\ce)$. Under a certain condition (of \emph{conservation of finiteness} (CF) \cite[Definition 3.5]{Enomoto2}), he proved that $\cE$ is admissible if and only if $\ce$ has $\AR$ conflations and $\Ex_+(\ce) = \AR_+(\ce)$. We are mainly concerned with $\Hom-$finite additive categories, and the condition (CF) is satisfied for all exact structures on them (essentially by Lemma \ref{length-support}).

\begin{theorem}[\cite{Enomoto2}]\label{Thm:Eno2} If (the underlying additive category of) an exact category $\cE$ is $\Hom-$finite, then $\cE$ is admissible if and only if $\Ex_+(\cE) = \AR_+(\cE).$ 
\end{theorem}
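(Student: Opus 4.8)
We must show: if the underlying additive category of an exact category $\cE$ is $\Hom$-finite, then $\cE$ is admissible $\Longleftrightarrow$ $\Ex_+(\cE) = \AR_+(\cE)$.

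**Plan.** The plan is to reduce this to the results of Enomoto already quoted, keeping track of the distinction between the semigroup statement $\Ex_+ = \AR_+$ and the group statement $\Ex = \AR$. The key translation tool is Proposition \ref{Prop:Enomoto2:2.3} together with Corollary \ref{AR:simple}, which identify the simple objects of $\eff \cE$ with the contravariant defects of AR conflations in $\cE$, and Lemma \ref{length-support}, which says that in the $\Hom$-finite setting a finitely generated $\cA$-module has finite length iff it has finite support. Here is the outline.

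First I would prove the forward implication. Suppose $\cE$ is admissible, i.e. every object of $\eff \cE$ has finite length. Then in particular for every indecomposable non-projective $Z$, the simple module $S_Z$ lies in $\eff \cE$ (since $\eff \cE$ is nonempty-closed-under-the-relevant-operations; more precisely one needs that $S_Z$ is finitely presented, which follows because $\eff \cE$ being a length category forces its simples to be among the $S_Z$, and conversely every such $S_Z$ that can be a defect is a subquotient of some defect), so by Proposition \ref{Prop:Enomoto2:2.3} there is an AR conflation ending at $Z$; dually AR conflations start at every non-injective indecomposable, hence $\cE$ has AR conflations. Now take any conflation $\delta\colon A \rightarrowtail B \twoheadrightarrow C$ in $\cE$. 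Its contravariant defect $\delta^{\sharp}$ lies in $\eff \cE$, hence has finite length; filtering $\delta^{\sharp}$ by a composition series whose factors are simples $S_{Z_1}, \dots, S_{Z_n}$ (with multiplicity), and using that each $S_{Z_i}$ is the defect of an AR conflation (Corollary \ref{AR:simple}(ii)) and that the class $[A]-[B]+[C]$ in $K_0^{\add}(\cE)$ is read off from the projective resolution (\ref{proj_res_contra}) of $\delta^{\sharp}$, additivity of the relevant Euler-type expression on short exact sequences of finitely presented modules gives
\begin{align*}
[A]-[B]+[C] \;=\; \sum_{i=1}^n \big([X_i]-[Y_i]+[Z_i]\big),
\end{align*}
where $X_i \rightarrowtail Y_i \twoheadrightarrow Z_i$ is the AR conflation with defect $S_{Z_i}$. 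The crucial point is that this is a sum with \emph{non-negative} coefficients, because a composition series has non-negative multiplicities; hence $[A]-[B]+[C] \in \AR_+(\cE)$. Since the generators of $\Ex_+(\cE)$ are of this form, $\Ex_+(\cE) \subseteq \AR_+(\cE)$, and the reverse inclusion is automatic since AR conflations are conflations. This is essentially Enomoto's argument for $\Ex(\cE)=\AR(\cE)$ in the additively finite case (Theorem \ref{enomoto}(vi)), and as noted after that theorem it works verbatim at the level of semigroups; the only input we are changing is the source of finite length of $\delta^{\sharp}$, which here is the admissibility hypothesis directly.

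For the reverse implication, I would invoke Theorem \ref{Thm:Eno2} from \cite{Enomoto2}, which states that under condition (CF) of \cite[Definition 3.5]{Enomoto2}, $\cE$ is admissible iff $\cE$ has AR conflations and $\Ex_+(\cE) = \AR_+(\cE)$. So it remains to check two things: that a $\Hom$-finite exact category satisfies (CF), and that $\Ex_+(\cE) = \AR_+(\cE)$ already forces $\cE$ to have AR conflations. The first is the content of the sentence "the condition (CF) is satisfied for all exact structures on them (essentially by Lemma \ref{length-support})" preceding the theorem: (CF) asks that finite-length finitely presented modules be closed under certain operations, and $\Hom$-finiteness converts finite length into finite support, which is manifestly stable. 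For the second: if $\Ex_+(\cE) = \AR_+(\cE)$ but $\cE$ lacked an AR conflation ending at some indecomposable non-projective $Z$, then no generator of $\AR_+(\cE)$, hence no element of $\AR_+(\cE)$, could account for conflations "detecting" $Z$ via its defect $S_Z$ — one uses Proposition \ref{Prop:Enomoto2:2.3} to see $S_Z \notin \eff\cE$ or $S_Z$ not finitely presented, and then exhibits a conflation whose defect has $S_Z$ as a subquotient (or rather, works in $K_0^{\add}$: the class of such a conflation cannot lie in the span of AR-classes because the latter involve only indecomposables admitting AR conflations); a cleaner route is to note that $Z\notin\cp(\cE)$ means there is a non-split conflation ending at $Z$, whose defect is a nonzero finitely presented module, and if $S_Z$ is not finitely presented this defect has infinite length, so one adjusts — in practice Enomoto handles this, and I would simply cite it. Thus the hypotheses of Theorem \ref{Thm:Eno2} are met and $\cE$ is admissible.

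**Main obstacle.** The essential subtlety — and the whole reason the present statement deserves a separate proof rather than being a one-line corollary — is the \emph{non-negativity} of the coefficients in the decomposition of a conflation class into AR-conflation classes. Expressed in $K_0^{\add}$ as groups, the equality $\Ex(\cE)=\AR(\cE)$ would only give an integer-coefficient decomposition; what rescues the semigroup statement is that the coefficients arise as composition-series multiplicities of the defect, which are genuinely non-negative. I expect the bookkeeping of additivity of $[A]-[B]+[C]$ along a short exact sequence of finitely presented functors (via the horseshoe lemma applied to the projective resolutions (\ref{proj_res_contra})) to be the step requiring the most care, but it is routine rather than deep. The verification that (CF) holds in the $\Hom$-finite case via Lemma \ref{length-support} is straightforward, and the AR-conflation existence half of the converse is packaged in Enomoto's Theorem \ref{Thm:Eno2}, so the real content on our side is the forward direction with its non-negativity.
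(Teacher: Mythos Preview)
The paper does not supply its own proof of this theorem: it is stated with the citation \cite{Enomoto2}, followed by the remark that Enomoto's argument (written for the group equality $\Ex = \AR$) goes through verbatim for the semigroups $\Ex_+$ and $\AR_+$, and the preceding paragraph notes that the condition (CF) of \cite{Enomoto2} is automatic in the $\Hom$-finite case by Lemma~\ref{length-support}. Your proposal is a faithful reconstruction of exactly this: your forward direction (composition series of the defect, with the composition multiplicities supplying the non-negative coefficients) \emph{is} Enomoto's argument, and your reverse direction is the same citation to \cite{Enomoto2} under (CF). So the approach is essentially identical to the paper's, only more explicit.

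One point worth tightening: in the reverse direction you need that $\Ex_+(\cE)=\AR_+(\cE)$ already forces $\cE$ to have AR conflations, and you punt this to Enomoto. It can be made self-contained with the tools already quoted. If $Z$ is indecomposable non-projective, pick a non-split conflation $\delta$ ending at $Z$; its class $[A]-[B]+[Z]$ lies in $\AR_+(\cE)$ by hypothesis, so write it as a non-negative sum of AR classes. Apply the additive function $\dim\Hom(Z,-)$ on $K_0^{\mathrm{add}}$: the left-hand side equals $\dim\delta^{\sharp}(Z)>0$, while each AR summand contributes $\dim S_{Z_i}(Z)$, which by Proposition~\ref{supp_simple} is nonzero only when $Z_i\cong Z$. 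Hence some AR conflation ends at $Z$; the dual argument handles non-injectives. This fills the only soft spot in your sketch.
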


We should again emphasize that Enomoto \cite{Enomoto2} proved that $\Ex(\cE) = \AR(\cE)$, but the proof remains the same for sub-semigroups. 

\begin{remark}
It follows from Theorem \ref{Thm:Eno2} and its proof that for exact structures on $\Hom-$finite categories, conditions $\Ex(\cE) = \AR(\cE)$ and $\Ex_+(\cE) = \AR_+(\cE)$ are equivalent. Indeed, if $\Ex(\cE) = \AR(\cE)$, then, by Theorem \ref{Thm:Eno2} (in the original statement of \cite{Enomoto2}), the structure $\ce$ is admissible. Then by the proof of Theorem \ref{Thm:Eno2}, $\Ex_+(\cE) = \AR_+(\cE)$. The converse implication is straightforward. The same is true for all exact structures satisfying the condition (CF).
\end{remark}

Zhu and Zhuang \cite{ZZ} call a Krull-Schmidt category $\ca$ \emph{locally finite} if every representable left and every representable left indecomposable $\ca$-module has finite support:
$$|\supp(\Hom(X, -))| < \infty, \qquad |\supp(\Hom(-, X))| < \infty, \qquad \forall X \in \Ind(\ca).$$
Since the category is Krull-Shmidt, this happens if and only if  every representable left and every representable left $\ca$-module has finite support:
$$|\supp(\Hom(X, -))| < \infty, \qquad |\supp(\Hom(-, X))| < \infty, \qquad \forall X \in \Iso(\ca).$$
There are slightly different notions of local finiteness of additive categories, but they coincide with this definition for $\Hom-$finite Krull-Schmidt categories. Zhu and Zhuang prove the following theorem for extriangulated structures on additive categories (see Section 7). We give an alternative proof for exact structures. Again, the statement in fact concerns semigroups.

\begin{theorem} [\cite{ZZ}] \label{ZZ} Let $\cA$ be a $\Hom-$finite, locally finite, Krull-Schmidt category and let $\ce$ be an exact structure on $\cA$. Then $\cE$ has $\AR-$conflations and $\Ex_+(\cE) = \AR_+(\cE)$
\end{theorem}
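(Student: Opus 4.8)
The plan is to reduce Theorem \ref{ZZ} to Theorem \ref{Thm:Eno2}. Theorem \ref{Thm:Eno2} tells us that for a $\Hom$-finite category, an exact structure $\ce$ is admissible if and only if $\Ex_+(\ce) = \AR_+(\ce)$; and by Proposition \ref{Prop:Enomoto2:2.3} together with Corollary \ref{AR:simple}, an admissible exact structure automatically has AR conflations, since every non-projective indecomposable $Z$ gives a simple $S_Z$ which, being of finite length, lies in $\eff \ce$, and dually for injectives. So it suffices to prove that every exact structure $\ce$ on a $\Hom$-finite, locally finite Krull-Schmidt category $\ca$ is admissible, i.e. that every object of $\eff \ce$ has finite length.

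First I would recall that every object $M \in \eff \ce$ is a finitely presented right $\ca$-module: it sits in an exact sequence $\Hom(-,B) \to \Hom(-,C) \to M \to 0$ coming from a conflation $A \rightarrowtail B \twoheadrightarrow C$. By Lemma \ref{length-support}, for a $\Hom$-finite Krull-Schmidt category a finitely generated module has finite length precisely when it has finite support. So the whole theorem boils down to the claim: \emph{for every conflation $\delta \colon A \rightarrowtail B \twoheadrightarrow C$ in $\ce$, the contravariant defect $\delta^{\sharp}$ has finite support.} Now $\delta^{\sharp}$ is the cokernel of $\Hom(-,g) \colon \Hom(-,B) \to \Hom(-,C)$, so $\supp(\delta^{\sharp}) \subseteq \supp(\Hom(-,C))$, because if $\Hom(X,C) = 0$ then $M(X) = 0$. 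By the local finiteness hypothesis, $\supp(\Hom(-,C))$ is finite. Hence $\supp(\delta^{\sharp})$ is finite, $\delta^{\sharp}$ has finite length, and since $\eff \ce$ is closed under subobjects and quotients in $\Mod \ca$ (Proposition \ref{abelian_Serre}(i)), and the composition series in $\eff \ce$ coincide with those in $\Mod \ca$, every object of $\eff \ce$ — being a subquotient-built object from such defects, or more directly simply being such a defect — has finite length. Thus $\ce$ is admissible.

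With admissibility established, Theorem \ref{Thm:Eno2} immediately yields $\Ex_+(\ce) = \AR_+(\ce)$, and the existence of AR conflations follows as noted above (every non-projective indecomposable $Z$ has $S_Z$ finitely presented — indeed $S_Z$ is a quotient of $\Hom(-,Z)$, whose support is finite by local finiteness, so $S_Z$ is of finite length hence finitely presented — and $Z \notin \cp(\ce)$, so by Proposition \ref{Prop:Enomoto2:2.3} there is an AR conflation ending at $Z$; dually, using local finiteness of $\Hom(X,-)$, one gets AR conflations starting at every non-injective indecomposable). This completes the proof.

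Honestly I expect there is essentially no obstacle here: the only subtle point is making sure that ``finitely presented'' is not needed for the support argument — it is not, since $\supp(\coker h) \subseteq \supp(\mathrm{target}\ h)$ for any morphism $h$ of $\ca$-modules — and that the reduction to Theorem \ref{Thm:Eno2} is legitimate, which it is because the hypotheses of Theorem \ref{ZZ} include $\Hom$-finiteness. The mild care needed is in the AR-conflation part, where one must confirm that local finiteness forces $S_Z$ (and $S^Z$) to be finitely presented, not merely finitely generated; this is exactly Lemma \ref{length-support} applied to the quotient $\Hom(-,Z) \twoheadrightarrow S_Z$, whose support is contained in the finite set $\supp(\Hom(-,Z))$. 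If one wanted to avoid invoking Theorem \ref{Thm:Eno2} as a black box, one could instead directly combine the finite-length property of all defects with Enomoto's semigroup version of the argument (cf. the discussion after Theorem \ref{enomoto}), but routing through Theorem \ref{Thm:Eno2} is cleanest.
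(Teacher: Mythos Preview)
Your proposal is correct and follows essentially the same route as the paper: show that local finiteness forces every defect to have finite support, hence finite length by Lemma~\ref{length-support}, so $\ce$ is admissible, and then invoke Theorem~\ref{Thm:Eno2}. You are in fact slightly more thorough than the paper in that you explicitly verify the existence of AR conflations (via Proposition~\ref{Prop:Enomoto2:2.3}), whereas the paper's proof only spells out the $\Ex_+ = \AR_+$ conclusion and leaves the AR-conflation part implicit in the admissibility.
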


Zhu and Zhuang assume the structure to be $\Ext^1-$finite, but this condition is not necessary at least in the exact case.

\begin{proof}
 Local finiteness implies that every representable functor has finite support. Since $\ca$ is $\Hom-$finite, by Lemma \ref{length-support} it has finite length. Then for each exact structure $\cE$ each effaceable functor also has finite length in $\Mod \ca$ as a quotient of an object of finite length. By \cite[Proposition A.3]{Enomoto2}, it has finite length in $\eff \ce$. By definition, $\cE$ is admissible. By Theorem \ref{Thm:Eno2},  $\Ex_+(\cE) = \AR_+(\cE).$
\end{proof}

\begin{example} \label{Geigle}
Let $\Lambda$ be a hereditary Artin algebra of infinite representation type. Its Auslander-Reiten quiver decomposes into the preprojective component, a collection of regular components and the preinjective component. Let $\mathscr{P}$ be the full subcategory of $\mod \Lambda$ defined by the preprojective component.  Clearly, the maximal exact structure $\cE^{\max}$ on $\mathscr{P}$ is the one induced from $\mod \Lambda.$ It was implicitly proved already by Geigle in \cite{Ge} and further discussed in \cite[Remark 2]{MMP} and \cite[Example 4.5(3)]{ZZ} that $\Ex(\cE^{\max}) = \AR(\cE^{\max}).$ The property $\Ex(\cE) = \AR(\cE)$ holds then for any exact structure $\cE$ on $\mathscr{P}.$ Note that $\mathscr{P}$ is a locally finite additive category, so this property also follows from the general result of Zhu and Zhuang \cite{ZZ}. Since $\Lambda$ is of infinite representation type, so is $\mathscr{P}.$ To summarize, the exact structure $\cE^{\max}$ on $\mathscr{P}$ provides the first example of an additive category which is not additively finite, while its maximal exact structure is admissible (and satisfies the property $\Ex_+ = \AR_+$).
\end{example}

\begin{example}
The split exact structure $\ce^{\add}$ on any additive category $\cA$ is always admissible: its category of defects has only one object, the zero functor. Every object in $\ca$ is projective-injective with respect to $\ce^{\add}$. Thus, each additive category has at least one admissible exact structure. This emphasizes the fact that the admissibility of an exact structure $\cE$ on an additive category $\cA$ depends on the $\cE$ itself and does not impose conditions on $\cA$.
\end{example}

\begin{example}
Let $\Lambda$ be a hereditary Artin algebra of infinite representation type, as in Example \ref{Geigle}. The maximal exact structure on $\mod \Lambda$ is not admissible. However, it has AR conflations. Their defects form the sets of simple covariant, resp. simple contravariant functors from $\mod \Lambda$ to the category $\mathbf{Ab}.$ Any Serre subcategory of $\mod(\mod \Lambda)$ generated by a subset of simples (i.e. of defects of some of AR conflations) via finitely many extensions is automatically a finite length category, so the corresponding exact structure on $\mod \Lambda$ is admissible. These can be seen as natural examples of non-split admissible exact structures on additive categories that are not locally finite. 
\end{example}

The last example can be generalized to the case of arbitrary Krull-Schmidt additive categories.

\begin{theorem} \label{max_adm:body}
Every Krull-Schmidt additive category $\cA$ has a unique maximal admissible exact structure. 
\end{theorem}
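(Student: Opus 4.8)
The plan is to use Theorem \ref{classif_general:body}: exact structures on $\ca$ correspond bijectively to Serre subcategories of the abelian category $\eff(\ce^{\max})$. Under this correspondence, an exact structure $\ce$ is admissible precisely when every object of $\eff \ce$ has finite length; by Corollary \ref{classif_interval}, $\eff \ce$ is the Serre subcategory of $\eff(\ce^{\max})$ associated to $\ce$. So the theorem reduces to the following claim about the abelian category $\cA^{\max} := \eff(\ce^{\max})$: among all Serre subcategories $\cS$ of $\cA^{\max}$ whose objects all have finite length \emph{in $\cA^{\max}$}, there is a largest one.

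The natural candidate is $\cS_{\mathrm{fl}}$, the full subcategory of $\cA^{\max}$ consisting of all objects of finite length (equivalently, by Lemma \ref{length-support}, of finite support, since $\ca$ is $\Hom-$finite and $\eff(\ce^{\max}) \subseteq \Mod \ca$ consists of finitely generated modules). First I would check that $\cS_{\mathrm{fl}}$ is a Serre subcategory of $\cA^{\max}$: it is closed under subobjects, quotients, and extensions in any abelian category, so this is immediate. It is clearly the largest Serre subcategory all of whose objects have finite length, because any Serre subcategory with this property is by definition contained in $\cS_{\mathrm{fl}}$. Via Theorem \ref{classif_general:body}, $\cS_{\mathrm{fl}}$ corresponds to an exact structure $\ce$ on $\ca$, and by construction $\eff \ce = \cS_{\mathrm{fl}}$ consists of finite length objects, so $\ce$ is admissible; any admissible $\ce' $ has $\eff \ce' \subseteq \cS_{\mathrm{fl}} = \eff \ce$, hence $\ce' \le \ce$ by Lemma \ref{eff_inclusion}. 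This gives the unique maximal admissible exact structure, and along the way identifies it as the one whose defect category is the finite-length part of $\eff(\ce^{\max})$, matching the statement of Theorem \ref{maximal_admissible}.

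There is one subtlety I want to be careful about. ``Finite length in $\cA^{\max}$'' and ``finite length in $\Mod \ca$'' must agree for objects of $\eff(\ce^{\max})$; this is guaranteed by \cite[Proposition A.3]{Enomoto2}, already cited in the excerpt, which says an object of $\eff \ce$ has finite length in $\eff \ce$ iff it has finite length in $\Mod \ca$, with the same composition series. Since $\eff(\ce^{\max})$ is itself such an $\eff \ce$, this applies, and there is no ambiguity in the definition of admissibility. I would also note that the simple objects of $\cS_{\mathrm{fl}}$ are exactly the finitely presented simple right $\ca$-modules $S_Z$ lying in $\eff(\ce^{\max})$, i.e. (by Proposition \ref{Prop:Enomoto2:2.3}) those $S_Z$ with $Z \notin \cp(\ce^{\max})$ and $S_Z$ finitely presented.

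I do not expect a serious obstacle here; the result is essentially a formal consequence of Theorem \ref{classif_general:body} together with the elementary fact that finite-length objects form the largest finite-length-closed Serre subcategory of an abelian category. The only point requiring slight care is making sure the two notions of length coincide, which is handled by the cited proposition of Enomoto. If one wanted a description of $\ce$ intrinsic to $\ca$ rather than to $\eff(\ce^{\max})$, one could further translate using Corollary \ref{AR:simple} and Proposition \ref{Prop:Enomoto2:2.3}: $\ce$ is the exact structure generated by all AR conflations of $\ce^{\max}$ whose defects are finitely presented simple modules, closed under the operations in Theorem \ref{Enomoto_classif}(ii); but for the bare existence-and-uniqueness statement this is not needed.
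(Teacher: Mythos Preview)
Your proposal is correct and follows essentially the same approach as the paper: take the Serre subcategory of finite-length objects in $\eff(\ce^{\max})$ and invoke Theorem \ref{classif_general:body} to obtain the maximal admissible exact structure. Your additional care about the agreement of the two notions of finite length (via \cite[Proposition A.3]{Enomoto2}) is a nice clarification that the paper leaves implicit, but the overall strategy is identical.
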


\begin{proof}
Take the maximal exact structure $(\ca, \ce^{\max})$. Its category of defects $\eff \ce^{\max}$ is abelian (Proposition \ref{abelian_Serre}(i)). Then its full subcategory of finite length objects is a Serre subcategory. 
By Theorem \ref{classif_general:body} it defines a (unique) exact structure $\cE$ on $\cA$. This exact category is admissible by construction, and again by construction any other admissible exact structure on $\cA$ is smaller.
\end{proof}

\begin{theorem} \label{classif_adm:body} 
Let $\cE$ be an admissible exact category on a Krull-Schmidt category $\cA.$ Then the following lattices are isomorphic:
\begin{itemize}
    \item The interval $[\ce^{\add}, \ce]$ in the lattice of exact structures;
    \item The lattice of subsets of the set of AR conflations in $\cE;$
    \item The lattice of subsets of the set of all the indecomposables in $\cA$ that are not projective in $\cE;$
    \item The lattice of subsets of the set of all the indecomposables in $\cA$ that are not injective in $\cE.$
\end{itemize}
In particular, the interval $[\ce^{\add}, \ce]$ is a Boolean lattice.
\end{theorem}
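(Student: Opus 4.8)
The plan is to reduce everything to the two classification results already established: Corollary~\ref{classif_interval}, which identifies the interval $[\ce^{\add},\ce]$ with the lattice of Serre subcategories of $\eff\ce$, and Proposition~\ref{Prop:Enomoto2:2.3} together with Corollary~\ref{AR:simple}, which identify the simple objects of $\eff\ce$ with the contravariant defects of AR conflations, equivalently with the indecomposables of $\ca$ that are not $\ce$-projective. Since $\ce$ is admissible, by definition every object of $\eff\ce$ has finite length; combined with the fact (cited after Proposition~\ref{Prop:Enomoto2:2.3}, from \cite[Proposition A.3]{Enomoto2}) that length in $\eff\ce$ agrees with length in $\Mod\ca$, the category $\eff\ce$ is a finite-length abelian category. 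The key point will be that, because $\eff\ce$ is a length category whose simple objects all come from the explicit list of Corollary~\ref{AR:simple}, its lattice of Serre subcategories is Boolean and is described by subsets of the set of simples.

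First I would record the precise statement I need: in any abelian category $\cB$ in which every object has finite length, the Serre subcategories of $\cB$ are in inclusion-preserving bijection with the subsets $T$ of the set $\mathrm{Simp}(\cB)$ of isomorphism classes of simple objects, via $T \mapsto \langle T\rangle$, the full subcategory of objects all of whose composition factors lie in $T$, with inverse $\cS \mapsto \mathrm{Simp}(\cS) = \mathrm{Simp}(\cB)\cap\cS$. This is elementary: a Serre subcategory is closed under subobjects, quotients and extensions, so by induction on length an object lies in $\cS$ iff all its composition factors do; and conversely any subset $T$ yields a Serre subcategory because taking composition factors is compatible with subobjects, quotients and extensions. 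Hence the Serre-subcategory lattice of $\cB$ is isomorphic to the Boolean lattice $2^{\mathrm{Simp}(\cB)}$.

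Next I would apply this with $\cB = \eff\ce$, which is abelian by Proposition~\ref{abelian_Serre}(i) and a finite-length category by admissibility plus \cite[Proposition A.3]{Enomoto2}. By Corollary~\ref{classif_interval}, $[\ce^{\add},\ce]$ is isomorphic to the lattice of Serre subcategories of $\eff\ce$, hence to $2^{\mathrm{Simp}(\eff\ce)}$. By Corollary~\ref{AR:simple}(i), $\mathrm{Simp}(\eff\ce)$ is in bijection with $\ind(\ca)\setminus\ind(\cp(\ce))$, the indecomposables not projective in $\ce$; by Corollary~\ref{AR:simple}(ii), these simples are exactly the contravariant defects of AR conflations, and since an AR conflation is determined up to isomorphism by its end terms (Theorem~\ref{AR:def/thm}), this is in bijection with the set of AR conflations in $\ce$. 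Dually, using $\eff\ce^{\op}$ and Corollary~\ref{AR:simple}(iii), we get the bijection with subsets of the indecomposables not injective in $\ce$. Chaining the isomorphisms $[\ce^{\add},\ce]\cong 2^{\mathrm{Simp}(\eff\ce)}\cong 2^{\{\text{AR conflations}\}}\cong 2^{\ind(\ca)\setminus\ind(\cp(\ce))}\cong 2^{\ind(\ca)\setminus\ind(\ci(\ce))}$ gives all four asserted isomorphisms, and the last clause follows since each of these is a Boolean lattice.

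The main obstacle is not any single deep step — all the heavy lifting is done by the cited results of Enomoto — but rather making sure the bijections are stated at the right level of generality and are genuinely lattice isomorphisms (order-preserving in both directions), and in particular that admissibility is used exactly where needed: it is what guarantees $\eff\ce$ is a length category, which is precisely the hypothesis under which "Serre subcategories $=$ subsets of simples" holds. One should also double-check that $\ind(\cp(\ce))$ could be infinite (if $\ca$ is not additively finite) without affecting the argument, since $2^{\ind(\ca)\setminus\ind(\cp(\ce))}$ is still Boolean; and that the bijection "AR conflations $\leftrightarrow$ non-$\ce$-projective indecomposables" is unambiguous, which is exactly the content of the uniqueness clause in Theorem~\ref{AR:def/thm} combined with Proposition~\ref{Prop:Enomoto2:2.3}. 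No genuinely new ideas beyond assembling these pieces seem to be required.
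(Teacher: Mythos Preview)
Your proposal is correct and follows essentially the same approach as the paper's own proof: both invoke Corollary~\ref{classif_interval} to identify $[\ce^{\add},\ce]$ with the Serre subcategories of $\eff\ce$, use admissibility to conclude $\eff\ce$ is a length category so that its Serre subcategories correspond to subsets of simples, and then appeal to Corollary~\ref{AR:simple} to identify the simples with AR conflations and with non-projective indecomposables. Your write-up is in fact more careful than the paper's (you explicitly verify order-preservation, the uniqueness of AR conflations via Theorem~\ref{AR:def/thm}, and you spell out the dual argument via $\eff\ce^{\op}$ for the injective case, which the paper's proof omits).
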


\begin{proof}
Since $\ce$ is admissible, every object in $\eff \ce$ has finite length. By Corollary \ref{classif_interval}, the interval $[\ce^{\add}, \ce]$ is isomorphic to the lattice of Serre subcategories of $\eff \ce.$
Every Serre subcategory of a category where each object has finite length is a category of all objects with (automatically finite) composition series with terms in some subset of the set of simples, this defines a bijection between Serre subcategories of the category $\eff \ce$ and subsets of its set of simple objects. This bijection preserves the contaimnent order, it is thus a lattice isomorphism. By Corollary \ref{AR:simple}, the lattice of subsets of the simple objects in $\eff \ce$ is isomorphic to the lattice of subsets of the set of AR conflations in $\cE$
 and to the lattice of subsets of the set $\Ind(\ca) \backslash \Ind(\cp(\ce)).$ 
\end{proof}

For an exact structure $\ce$ and a full subcategory $\cb \subset \ca,$ one can associate a pair of exact structures $\ce_{\cb}, \ce^{\cb} \in [\ce^{\add}, \ce]$ as follows: a conflation $\delta: X \rightarrowtail Y \twoheadrightarrow Z$ in $\ce$ is also a conflation in $\ce_{\cb}$ if $\delta^{\sharp}(B) = 0, \ \forall B \in \cb.$ Dually, it is in  $\ce^{\cb}$ if $\delta_{\sharp}(B) = 0, \ \forall B \in \cb.$ One can reformulate these conditions in terms of $\Ext^1-$bifunctors:
\begin{align*}
\Ext^1_{\ce_{\cb}}(Z, X) = \left\{\delta \in \Ext^1_{\ce}(Z, X) \ | \ \Hom(\cb, Y) \to \Hom(\cb, Z) \to 0 \ \text{is exact} \right\}; \\
\Ext^1_{\ce^{\cb}}(Z, X) = \left\{\delta \in \Ext^1_{\ce}(Z, X) \ | \ \Hom(Y, \cb) \to \Hom(X, \cb) \to 0 \ \text{is exact} \right\}. \end{align*}

Note that all objects in $\cb$ become projective in $\ce_{\cb}$ and injective in $\ce^{\cb}.$

\begin{proposition} \label{relative_subcat}
For an exact structure $\ce$ on a Krull-Schmidt category $\ca$, the following are equivalent:
\begin{itemize}
\item[(i)] 
$\ce$ is admissible;
\item[(ii)]
For each $\ce' \in [\ce^{\add}, \ce]$, there exists a full subcategory $\cb \subset \ca,$ such that $\ce' = \ce^{\cb};$
\item[(iii)]
For each $\ce' \in [\ce^{\add}, \ce]$, there exists a full subcategory $\cb' \subset \ca,$ such that $\ce' = \ce_{\cb'}.$
\end{itemize}
\end{proposition}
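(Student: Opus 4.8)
The plan is to reduce everything to Corollary \ref{classif_interval}, which identifies $[\ce^{\add},\ce]$ with the lattice of Serre subcategories of $\eff\ce$ and, dually, of $\eff\ce^{\op}$. The first step is to pin down which Serre subcategories correspond to the relative structures $\ce_\cb$ and $\ce^\cb$. Directly from the definition, the conflations of $\ce_\cb$ are the $\delta\in\ce$ with $\delta^\sharp|_\cb=0$, so $\eff(\ce_\cb)=\{F\in\eff\ce : F|_\cb=0\}=\{F\in\eff\ce : \supp F\cap\ind\cb=\emptyset\}$, which is a Serre subcategory (the kernel of the exact functor $F\mapsto(F(B))_{B\in\ind\cb}$), and as $\cb$ varies this produces exactly the subcategories $\cS_T:=\{F\in\eff\ce : \supp F\subseteq T\}$ for $T\subseteq\Ind(\ca)$ (take $\cb=\add(\Ind(\ca)\setminus T)$). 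Dually, $\eff((\ce^\cb)^{\op})=\{G\in\eff\ce^{\op} : \supp G\subseteq\Ind(\ca)\setminus\ind\cb\}$. Hence (iii) is equivalent to the assertion that \emph{every} Serre subcategory of $\eff\ce$ has the form $\cS_T$, and (ii) to the analogous assertion for $\eff\ce^{\op}$. Two elementary facts are used throughout: every $F\in\eff\ce$ vanishes on $\cp(\ce)$ (immediate from $F=\delta^\sharp$ and the lifting property defining projectives), and $\supp S_W=\{W\}$ for every indecomposable $W$ (Proposition \ref{supp_simple}).

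For (i)$\Rightarrow$(ii),(iii): if $\ce$ is admissible then $\eff\ce$ is a length category, and so is $\eff\ce^{\op}$, since it is equivalent to $(\eff\ce)^{\op}$ via the $\Ext^2$-duality. In a length category every Serre subcategory is determined by the simple objects it contains; by Corollary \ref{AR:simple} the simples of $\eff\ce$ are the $S_Z$ with $Z\notin\cp(\ce)$, and for a finite-length functor $F$ the support $\supp F$ is precisely the set of indices of the composition factors of $F$. Therefore, for any $\ce'\in[\ce^{\add},\ce]$, $\eff\ce'=\{F\in\eff\ce : \supp F\subseteq\Ind(\ca)\setminus\ind\cp(\ce')\}=\eff(\ce_{\cp(\ce')})$, so $\ce'=\ce_{\cp(\ce')}$; the dual argument gives $\ce'=\ce^{\inj(\ce')}$. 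Thus $\cb=\cp(\ce')$ works in (iii) and $\cb=\inj(\ce')$ in (ii). (Alternatively, this direction can be read off by combining Theorem \ref{classif_adm:body} with the identification of the $\cS_T$ above.)

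For the converse I would argue the contrapositive of (iii)$\Rightarrow$(i), the implication (ii)$\Rightarrow$(i) being dual. Suppose $\ce$ is not admissible. Then the full subcategory $\cS_\Sigma\subseteq\eff\ce$ of finite-length objects is a \emph{proper} Serre subcategory (proper because some object of $\eff\ce$ has infinite length); let $\ce'\in[\ce^{\add},\ce]$ be the corresponding exact structure. I claim $\ce'\neq\ce_\cb$ for every full subcategory $\cb$. If $\ce'=\ce_\cb$, then $\cS_\Sigma=\cS_T$ with $T=\Ind(\ca)\setminus\ind\cb$; choosing $M\in\eff\ce\setminus\cS_\Sigma$ (possible since $\cS_\Sigma$ is proper), from $M\notin\cS_T$ we get some $W\in\supp M$ with $W\notin T$. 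Since $M(W)\neq0$ and $M\in\eff\ce$, we have $W\notin\cp(\ce)$, so by Corollary \ref{AR:simple} the functor $S_W$ is a simple — hence finite-length — object of $\eff\ce$; then $S_W\in\cS_\Sigma=\cS_T$ forces $\{W\}=\supp S_W\subseteq T$, contradicting $W\notin T$. So (iii) fails.

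The hard part is this last step — exhibiting one Serre subcategory that is provably not of the form $\cS_T$ — and it is where admissibility genuinely enters. The finite-length subcategory $\cS_\Sigma$ is the natural candidate precisely because, by Corollary \ref{AR:simple}, every indecomposable in the support of \emph{any} effaceable functor already occurs in the support of a finite-length one, namely the corresponding simple $S_W$; this forces $\cS_\Sigma$ to be support-saturated, so it can equal $\cS_T$ only if it is all of $\eff\ce$, which is exactly the admissible case. (If one prefers not to invoke Corollary \ref{AR:simple}, one can instead produce $S_W\in\eff\ce$ by hand: a nonzero map $\Hom(-,W)\to M$ has finitely generated image, which lies in $\eff\ce$ by Proposition \ref{abelian_Serre}(ii), and its top is $S_W$ because $\End_\ca(W)$ is local; the delicate point on that route is the finite presentability of $S_W$.)
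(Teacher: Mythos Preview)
Your proof is correct and follows essentially the same strategy as the paper: the forward direction uses the Boolean classification of sub-structures of an admissible $\ce$ (Theorem~\ref{classif_adm:body}/Corollary~\ref{AR:simple}), and the converse exhibits the finite-length Serre subcategory of $\eff\ce$ as the exact structure that cannot be of the form $\ce_\cb$ (resp.\ $\ce^\cb$). Your reformulation via the support-defined Serre subcategories $\cS_T$ and the explicit identifications $\ce'=\ce_{\cp(\ce')}$, $\ce'=\ce^{\inj(\ce')}$ make the argument somewhat cleaner than the paper's ``by direct verification'', and your parenthetical remark correctly flags that the invocation of Corollary~\ref{AR:simple} in the converse tacitly uses that $S_W$ is finitely presented---a point the paper's own proof also relies on.
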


\begin{proof}
We prove the equivalence of conditions (i) and (ii), the equivalence (i) $\Leftrightarrow$ (iii) is proved by dual arguments.

{\bf (i) $\Rightarrow$ (ii)}: Assume that $\ce$ is admissible. Let $\ce'$ be a smaller exact structure. By Theorem \ref{classif_adm:body}, it canonically defines a subset $S_1$ of the set $\Ind(\ca) \backslash \Ind(\cp(\ce))$. Let $\cb$ be the full subcategory of $\ca$ of all finite direct sums of elements of $\Ind(\ca) \backslash S_1.$ By direct verification, $\ce' = \ce^{\cb}.$

{\bf (ii) $\Rightarrow$ (i)}: We prove the implication by contradiction. Assume that $\ce$ is not admissible. The full subcategory of all objects of finite length in $\eff \ce$ is a Serre subcategory. By Corollary \ref{classif_interval}, it canonically defines an exact structure $\ce' < \ce$. Since $\eff \ce'$ has the same simple objects as $\eff \ce,$ the sets of projectives $\cp(\ce')$ and $\cp(\ce)$ coincide by Corollary \ref{AR:simple}.
Consider an arbitrary full subcategory $\cb \subset \ca$. If $\cb \subset \cp(\ce)$, then $\ce^{\cb} =\ce.$ If $\cb \not\subset \cp(\ce)$, then $\ce^{\cb}$ has strictly more projectives than $\ce.$ Therefore, there is no full subcategory $\cb \subset \ca$ such that $\ce = \ce^{\cb}.$
\end{proof}

\section{Hall algebras}\label{Sec:3}

Let $\mathcal{E}$ be an essentially small exact category, linear over a finite field $k.$ Assume that $\mathcal{E}$ has finite morphism and (first) extension spaces:
$$|\mbox{Hom}_\mathcal{E}(A, B)| < \infty, \quad  |\Ext^1_\mathcal{E} (A, B)| < \infty, \quad \forall A, B \in \mathcal{E}.$$
Given objects $A, B, C \in \mathcal{A},$ define $\Ext^1_\mathcal{E} (A, C)_B \subset \Ext^1_\mathcal{E} (A, C)$ as the subset parameterizing extensions whose middle term is isomorphic to B. We define the Hall algebra $\mathcal{H(E)}$ to be the $\mathbb{Q}-$vector space whose basis is formed by the isomorphism classes $[A]$ of objects $A$ of $\mathcal{E},$
with the multiplication defined by
$$[A] \diamond [C] = \sum\limits_{B \in \mbox{\footnotesize{Iso}}(\mathcal{E})} \frac{|\Ext^1_\mathcal{E} (A, C)_B|}{|\mbox{Hom}_\mathcal{E} (A, C)|} [B].$$

The following result was proved by Ringel \cite{R1} for $\mathcal{E}$ abelian, and later by Hubery \cite{Hu} for $\mathcal{E}$ exact. The definition of $\mathcal{H(E)}$ is also due to Ringel.

\begin{theorem}[\cite{R1, Hu}]
The algebra $\mathcal{H(E)}$ is associative and unital. The unit is given by $[0]$, where $0$ is the zero object of $\mathcal{E}$.
\end{theorem}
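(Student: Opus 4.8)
The plan is to establish associativity and unitality of $\cH(\cE)$ by the standard counting argument adapted to the exact-categorical setting, following Ringel and Hubery, but keeping track of where exactness (as opposed to abelianness) is used.

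\medskip

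\noindent\textbf{Unitality.} First I would check that $[0]$ is a two-sided unit. For any object $A$, the extension group $\Ext^1_\cE(A,0)$ and $\Ext^1_\cE(0,A)$ are trivial (they classify conflations with a zero end term, which must be split), and $|\Hom_\cE(A,0)| = |\Hom_\cE(0,A)| = 1$. Moreover $\Ext^1_\cE(A,0)_B$ is nonempty precisely when $B \cong A$, since the conflation $0 \rightarrowtail A \twoheadrightarrow A$ is the only one up to isomorphism with those end terms; similarly on the other side. Plugging into the structure constants gives $[A]\diamond[0] = [A] = [0]\diamond[A]$, so $[0]$ is the unit.

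\medskip

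\noindent\textbf{Associativity.} The core of the argument is a bijective count. For objects $A, B, C, D$, one compares the coefficient of $[D]$ in $([A]\diamond[B])\diamond[C]$ with that in $[A]\diamond([B]\diamond[C])$. Unwinding the definition of $\diamond$, the coefficient of $[D]$ in $([A]\diamond[B])\diamond[C]$ is
$$\sum_{X}\frac{|\Ext^1_\cE(A,B)_X|}{|\Hom_\cE(A,B)|}\cdot\frac{|\Ext^1_\cE(X,C)_D|}{|\Hom_\cE(X,C)|},$$
and symmetrically for the other bracketing with an intermediate object $Y$ with $[B]\diamond[C]$ passing through $Y$. The classical way to prove these are equal is to introduce the set of ``filtrations of $D$ of type $(A,B,C)$'': admissible flags $0 \rightarrowtail D_1 \rightarrowtail D_2 \rightarrowtail D$ (with admissible monics) such that $D_1 \cong A$ (or $C$, depending on convention), $D_2/D_1 \cong B$, $D/D_2 \cong C$. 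One shows this set admits two fibrations, one grouping by the subobject $D_2$ (whose isomorphism type is the intermediate $X$ above) and one grouping by the quotient through which $C$ is read off (type $Y$), and that the fibers have the stated cardinalities up to the $|\Hom|$ normalization factors, which are precisely the ``Riedtmann--Peng'' correction terms accounting for automorphisms. Equivalently, and this is Hubery's approach, one uses the associativity of the $\Ext$-composition and the fact that both sides count triples of compatible conflations; this is where the Quillen axioms enter essentially, via the existence of pullbacks of deflations and pushouts of inflations [E2], [E2$^{op}$], together with the $3\times 3$ (nine) lemma for exact categories, which guarantees that the two ways of ``refining'' a conflation $A' \rightarrowtail D \twoheadrightarrow C$ into a length-three admissible flag produce matching combinatorial data.

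\medskip

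\noindent\textbf{Main obstacle.} The genuinely nontrivial point, and the reason the theorem is ``deep'' as noted in the introduction, is that one cannot simply invoke the abelian-category proof after embedding $\cE$ as an extension-closed subcategory of an abelian category $\cA_{\mathrm{ab}}$: the Hall numbers $|\Ext^1_\cE(A,C)_B|$ computed inside $\cE$ need not coincide with those computed in $\cA_{\mathrm{ab}}$, because a non-split conflation in $\cA_{\mathrm{ab}}$ with middle term in $\cE$ need not be a conflation of $\cE$, and because $\cA_{\mathrm{ab}}$ may fail to be $\Hom$- and $\Ext^1$-finite. So the counting has to be done intrinsically. The hard part is therefore checking that the associativity identity among structure constants holds using only the exact-category axioms: concretely, verifying that pulling back a conflation along a deflation and pushing out along an inflation interact correctly, so that the bijection between the two families of length-three admissible flags of $D$ is well-defined and compatible with the automorphism-group bookkeeping. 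Once the relevant nine-lemma-type diagram chase in an exact category is in place (this is standard, cf. \cite{Buh}), the counting is formally identical to Ringel's, and associativity follows. I would also remark that the associativity is equivalent to the $2$-Segal property of the Waldhausen $S_\bullet$-construction, as recalled in the introduction, but for a self-contained proof the direct count is cleanest.
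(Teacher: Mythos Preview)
The paper does not actually provide a proof of this statement; it is quoted as a result of Ringel \cite{R1} (abelian case) and Hubery \cite{Hu} (exact case), with no argument given beyond the citations. Your sketch correctly outlines the standard proof: unitality is immediate from the triviality of $\Ext^1$ with a zero end term, and associativity is established by the filtration-counting argument, with the exact-category axioms (pullbacks of deflations, pushouts of inflations, the $3\times 3$ lemma as in \cite{Buh}) replacing the abelian-category input. Your observation that one cannot simply reduce to the abelian case via an embedding is also made explicitly in the paper's introduction. So your proposal is consistent with the cited literature, and there is nothing in the paper's own text to compare it against.
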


\begin{remark}
The choice of the structure constants
$\frac{|\Ext^1_{\mathcal{E}}(A,C)_{B}|}{|\Hom_{\mathcal{E}}(A,C)|} $ is the one that was used by Bridgeland \cite{Br}.
This choice is equivalent to that of the usual structure constants
$|\{ B' \subset C | B' \cong B, C/B' \cong A  \}|,$
called the {\it Hall numbers} and appearing in \cite{R1,Sch,Hu}. 
See \cite[\S2.3]{Br} for the details. More precisely, the usual Hall algebra carries a natural coalgebra structure. Our choice of the structure constants actually defines the dual algebra of the Hall coalgebra. It is known that this dual algebra and the usual Hall algebra are naturally isomorphic, see \cite{XX2} for a detailed discussion. 
\end{remark}

For twisted Hall algebras defined below, one has to tensor $\mathcal{H(E)}$  with $\mathbb{Q}(\nu),$ for $\nu = \sqrt{q},$ and consider it as an algebra over $\mathbb{Q}(\nu).$ By abuse of notation, we will use the same notation $\cH(\cE)$ for this new algebra, and we will not usually specify which of the two we consider. 

Assume that $\mathcal{E}$ is locally homologically finite and that all higher extension spaces are finite: for any $A, B \in \mathcal{E}$ and $p\geq 0$, $|\Ext^p_\mathcal{E} (A, B)| < \infty$; and there exists $p_0\geq 0$ such that for any $p>p_0$, $\Ext^p_\mathcal{E} (A, B) = 0$.

For objects $A, B \in \mathcal{E}$, we define the (multiplicative) Euler form 
$$\left\langle A, B \right\rangle_{\ce} := \prod_{i = 0}^{\infty} |\Ext^i_{\mathcal{E}}(A,B)|^{{(-1)}^i}.$$

From the existence of long exact sequences, it follows that this form descends to a bilinear form on the Grothendieck group $K_0(\mathcal{E})$ of $\mathcal{E}$,
denoted by the same symbol:
$$\left\langle \cdot, \cdot \right\rangle_{\cE}: K_0(\mathcal{E}) \times K_0(\mathcal{E}) \to \mathbb{Q}^{\times}.$$

One can also define the additive Euler form by taking the alternating sum of dimensions of extension spaces:
$$\left\langle A, B \right\rangle_{\cE, a} := \sum_{i=0}^\infty  (-1)^i\dim(\Ext^i_{\mathcal{E}}(A,B)).$$

If $\cE$ is an $\mathbb{F}_q-$linear category, we get $\left\langle A, B \right\rangle_{\cE} = q^{\left\langle A, B \right\rangle_{\ce, a}}.$

To avoid overlong notations, we will write simply $\left\langle A, B \right\rangle$ and $\left\langle A, B \right\rangle_{a}$ whenever the category $\cE$ is clear from the context.

Given any bilinear form on $K_0(\cE)$ with values in $\mathbb{Q}(\nu)^{\times},$ one can define a $\nu-$deformation of the Hall algebra with a twisted multiplication. The most important one is given by the square root of the multiplicative Euler form.
Namely, the {\it twisted Hall algebra} $\mathcal{H}_{tw}(\mathcal{E})$
is the same vector space over $\mathbb{Q}(\nu)$ as $\mathcal{H}(\mathcal{E}),$
with the twisted multiplication
$$[A] * [B] := \sqrt{\left\langle A, B \right\rangle} \cdot [A] \diamond [B] = \nu^{\left\langle A, B \right\rangle_{a}} \cdot [A] \diamond [B], \quad \forall  A, B \in \Iso(\mathcal{E}).$$

\section{Exact structures and degenerations of Hall algebras}\label{Sec:4}

Let $\cA$ be an additive category. As in the previous sections, we assume that $\cA$ is essentially small, $\Hom-$finite, idempotent complete and linear over some field $k.$ We know that $\cA$ can be endowed with many different exact structures. Assume that $\cE'$ and $\cE$ are two different $\Ext^1-$finite exact structures. Then their Hall algebras $\cH(\cE')$ and $\cH(\cE)$ are both well-defined. It is then natural to ask whether there is any sensible relation between these algebras. In this section, we find such a relation in the case when $\cE$ is larger than $\cE',$ i.e. it has more conflations. We denote it by $\cE' < \cE.$  Note that in this case, $\cE'$ has more projective objects than $\cE$ and, in particular, more indecomposable projectives: $\ind(\cp(\cE)) \subset \ind(\cp(\cE')),$ where $\ind(-)$ denotes the set of indecomposable objects in a category. We denote by $\ce \backslash \ce'$ the class of conflations in $\ce$ that are \emph{not} conflations in $\ce'.$  Proceeding from $\ce$ to $\ce'$ is called the \emph{reduction} of exact structures in \cite{BHLR}, see the reference for motivation.

\subsection{General result}

Generalizing a definition in \cite{FFR1} from the category of quiver representations (considered as an abelian category), we introduce the following notions.

\begin{definition}
A function $w: \Iso(\cA) \to \mathbb{N}$ is called
\begin{itemize}
\item[(i)] {\it normalized} if $w(M) = 0$ only for $M = 0$,
\item[(ii)] {\it an $\cE-$quasi-valuation} if $w(X) \leq w(M \oplus N)$ whenever there exists a conflation $$N \rightarrowtail X \twoheadrightarrow M$$ in $\cE$;
\item[(iii)] {\it $\ce-$additive} if  $w(X) = w(M) + w(N)$ whenever there exists a conflation $$N \rightarrowtail X \twoheadrightarrow M$$ in $\cE$;
\item[(iv)] {\it an $\ce-$valuation} if it is an $\cE-$quasi-valuation and $\cE^{\mathrm{add}}-$additive, i.e. we have $w(M \oplus N) = w(M) + w(N)$ for all $M$ and $N$;
\item[(v)] {\it $\ce'$-characteristic in $\ce$} if it is $\ce'-$additive and $w(X) < w(M)+w(N)$ whenever
there exists a conflation 
$$N \rightarrowtail X \twoheadrightarrow M$$ 
in $\ce \backslash \ce'$.
\end{itemize}
\end{definition}

Note that, for $\ce' < \ce,$ any  $\ce-$(quasi-)valuation is an $\ce'-$(quasi-)valuation and any $\ce'-$characteristic function in $\ce$ is an $\ce-$valuation. 
$\ce^{max}-$valuations and
$\ce^{\mathrm{add}}-$characteristic functions in $\ce^{\max}$ were called respectively \emph{admissible} and \emph{strongly admissible} in \cite{FFR1}.

\begin{proposition} \label{filtrations_and_char}
\begin{itemize}
\item[(i)]If $w$ is an $\ce-$valuation, there exists an algebra filtration $\cF^w_{\bullet}$ on $\cH(\cE)$
where $\cF^w_n$ is spanned by the $[M]$ such that $w(M) \leq n$, which is normalized if $w$ is so.

\item[(ii)] If $w$ is $\ce'-$characteristic in $\ce$, the associated graded algebra of $\cH(\cE)$ with respect to this
filtration is $\ch(\ce')$.
\end{itemize}
\end{proposition}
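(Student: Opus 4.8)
The plan is to read off both statements from the defining formula of the Hall product, keeping track of the value of $w$ on the middle term of a conflation.

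\textbf{Part (i).} First I would record that an $\ce$-valuation satisfies $w(0) = 0$ (apply $\ce^{\add}$-additivity to $0 = 0 \oplus 0$) and depends only on isomorphism classes, so that $\cF^w_\bullet$ is an increasing, exhaustive chain of subspaces of $\cH(\ce)$ with $[0] \in \cF^w_0$, and $\cF^w_0 = \mathbb{Q}[0]$ exactly when $w$ is normalized (then $w(M) \le 0$ forces $M = 0$). To see that the chain is an algebra filtration, take $[A], [C]$ with $w(A) \le m$ and $w(C) \le n$. Every $B$ occurring with nonzero coefficient in $[A] \diamond [C]$ fits into a conflation $C \rightarrowtail B \twoheadrightarrow A$ of $\ce$, so the $\ce$-quasi-valuation property gives $w(B) \le w(A \oplus C)$, and $\ce^{\add}$-additivity gives $w(A \oplus C) = w(A) + w(C) \le m + n$. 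Hence $[A] \diamond [C] \in \cF^w_{m+n}$.

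\textbf{Part (ii).} As observed just before the proposition, an $\ce'$-characteristic function is in particular an $\ce$-valuation: $\ce^{\add} \subseteq \ce'$ together with $\ce'$-additivity yields $\ce^{\add}$-additivity, and for any conflation of $\ce$ one has $w(X) = w(M)+w(N) = w(M \oplus N)$ if it lies in $\ce'$, and $w(X) < w(M)+w(N) = w(M \oplus N)$ if it lies in $\ce \setminus \ce'$. So part (i) applies: $\gr^w \cH(\ce)$ is a graded algebra, and $\gr^w_n$ has as basis the images of the $[M]$ with $w(M) = n$. Let $\Phi \colon \cH(\ce') \to \gr^w \cH(\ce)$ be the linear map sending $[M]$ to the class of $[M]$ in $\gr^w_{w(M)}$; it matches up the standard bases, hence is a linear isomorphism, and it sends $[0]$ to the unit. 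The key point is the identity
$$\Ext^1_\ce(A, C)_B = \Ext^1_{\ce'}(A, C)_B \quad \text{whenever } w(B) = w(A) + w(C):$$
if a class in $\Ext^1_\ce(A,C)_B$ did not lie in $\ce'$, a representing conflation would lie in $\ce \setminus \ce'$ and the characteristic property would force $w(B) < w(A) + w(C)$, a contradiction; the reverse inclusion is automatic from $\ce' \subseteq \ce$. Since $\ce'$-additivity also makes $\Ext^1_{\ce'}(A,C)_B$ empty unless $w(B) = w(A)+w(C)$, and $|\Hom_\ce(A,C)| = |\Hom_{\ce'}(A,C)|$ (the underlying category is the same), the product $\Phi([A]) \cdot \Phi([C])$ in $\gr^w$ is obtained from $[A] \diamond_\ce [C]$ by deleting the terms $[B]$ with $w(B) < w(A)+w(C)$ and reading the surviving coefficients as $\ce'$-Hall numbers; this is exactly $\Phi([A] \diamond_{\ce'} [C])$. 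Hence $\Phi$ is an algebra isomorphism.

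I do not expect a real obstacle; the one place needing care is the bookkeeping for $\Ext^1$ viewed as a set of equivalence classes of conflations. One should check that a class of $\Ext^1_\ce(A,C)$ belongs to the subset $\Ext^1_{\ce'}(A,C)$ if and only if its representing conflations lie in $\ce'$ --- true because $\ce'$ is closed under isomorphism of conflations and the Yoneda equivalence is realized by isomorphisms fixing the two end terms --- and that the ``middle term $\cong B$'' condition is unaffected by the inclusion $\ce' \subseteq \ce$. Granting these identifications, everything reduces to the elementary computation with the Hall product indicated above.
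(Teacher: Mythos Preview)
Your argument is correct and follows essentially the same approach as the paper: the paper defers to \cite[Corollary 1]{FFR1} and only sketches that the basis $\{[A]\mid A\in\Iso(\ca)\}$ is compatible with the filtration and that part (ii) follows by comparing structure constants, which is precisely what you carry out in detail. Your explicit verification of the key identity $\Ext^1_\ce(A,C)_B=\Ext^1_{\ce'}(A,C)_B$ for $w(B)=w(A)+w(C)$, together with the observation that $\ce'$-additivity forces $\Ext^1_{\ce'}(A,C)_B=\varnothing$ otherwise, is exactly the comparison of structure constants the paper alludes to; your care with the Yoneda equivalence classes is appropriate and causes no trouble.
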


\begin{proof}
The proof of \cite[Corollary 1]{FFR1} applies here word for word. It uses only \cite[Lemma 3]{FFR1}. The idea in (i) is that the basis $\left\{[A] \ | A \in \Iso(\ca) \right\}$ is \emph{compatible} with the filtration $\cF^w_{\bullet}$ in the sense that its intersection with $\cF^w_n$ is a basis of  $\cF^w_n$ for all $n.$ Part (ii) then follows from the form of structure constants in the definitions of $\ch(\ce)$ and $\ch(\ce')$, as well as the definition of  $\ce'-$characteristic functions in $\ce$.
\end{proof}

\begin{corollary}\label{Cor:q-symm}
If $w$ is $\ce^{\add}-$characteristic in $\ce$, the associated graded algebra of $\cH(\cE)$ with respect to the
filtration $\cF^w_{\bullet}$ is $\ch(\ce^{\add}).$ It is a skew polynomial algebra in variables in $\Iso(\cA),$ namely, the monoid algebra of $(\Iso(\cA), \oplus),$ twisted by the form $\frac{1}{|\Hom_{\ca}(-,-)|}$.
\end{corollary}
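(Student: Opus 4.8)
The plan is to deduce Corollary~\ref{Cor:q-symm} directly from Proposition~\ref{filtrations_and_char} together with an explicit identification of the Hall algebra $\ch(\ce^{\add})$. The first observation is that an $\ce^{\add}$-characteristic function $w$ in $\ce$ is, in particular, an $\ce$-valuation (as noted immediately after the definition), so Proposition~\ref{filtrations_and_char}(i) gives an algebra filtration $\cF^w_\bullet$ on $\ch(\ce)$; and since $w$ is $\ce^{\add}$-characteristic, i.e.\ $\ce' = \ce^{\add}$ in the statement of Proposition~\ref{filtrations_and_char}(ii), the associated graded algebra is exactly $\ch(\ce^{\add})$. So the only thing left to prove is the explicit description of $\ch(\ce^{\add})$ as a twisted monoid algebra.

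For the second part I would unwind the definition of the Hall product in the split exact structure. In $\ce^{\add}$ the only conflations are (up to isomorphism) the split ones $C \rightarrowtail B \twoheadrightarrow A$ with $B \cong A \oplus C$, so $\Ext^1_{\ce^{\add}}(A,C) = 0$ and hence $\Ext^1_{\ce^{\add}}(A,C)_B$ is nonempty (a single point, the zero class) precisely when $B \cong A \oplus C$. Plugging this into the structure constants
\[
[A] \diamond [C] = \sum_{B \in \Iso(\ca)} \frac{|\Ext^1_{\ce^{\add}}(A,C)_B|}{|\Hom_\ca(A,C)|}\, [B]
\]
collapses the sum to the single term $B = A \oplus C$, giving $[A] \diamond [C] = \frac{1}{|\Hom_\ca(A,C)|} [A \oplus C]$. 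This is exactly the multiplication rule of the monoid algebra of $(\Iso(\ca), \oplus)$ twisted by the $2$-cocycle $(A,C) \mapsto \frac{1}{|\Hom_\ca(A,C)|}$, so one should check that this scalar function is indeed a $2$-cocycle on the monoid $(\Iso(\ca),\oplus)$: this amounts to $|\Hom_\ca(A, C\oplus D)| = |\Hom_\ca(A,C)| \cdot |\Hom_\ca(A,D)|$ and $|\Hom_\ca(A\oplus C, D)| = |\Hom_\ca(A,D)| \cdot |\Hom_\ca(C,D)|$, which hold by additivity of $\Hom$ in each variable, and with normalization $|\Hom_\ca(0,-)| = |\Hom_\ca(-,0)| = 1$. (Equivalently, one can simply note that associativity of the twisted product is guaranteed by Hubery's theorem applied to $\ce^{\add}$, so no separate cocycle verification is strictly needed for the algebra to be well-defined; but recording the cocycle identity makes the "skew polynomial algebra" description transparent.) The word "skew polynomial" refers to the fact that for indecomposables $X, Y$ one gets $[X]\diamond[Y] = \frac{1}{|\Hom(X,Y)|}[X\oplus Y]$ and $[Y]\diamond[X] = \frac{1}{|\Hom(Y,X)|}[Y\oplus X]$ with $[X\oplus Y] = [Y \oplus X]$, so the generators $q$-commute up to the ratio $|\Hom(Y,X)|/|\Hom(X,Y)|$; since $\ca$ is Krull--Schmidt the indecomposables freely generate $(\Iso(\ca),\oplus)$ as a commutative monoid, and the algebra is a twisted polynomial ring on those generators.

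I do not anticipate a real obstacle here: the proof is essentially a bookkeeping exercise, with the substantive content already carried by Proposition~\ref{filtrations_and_char} (hence ultimately by \cite[Lemma 3, Corollary 1]{FFR1}) and by Hubery's associativity theorem. The one point deserving a sentence of care is the passage to the twisted Hall algebra $\ch_{tw}(\ce^{\add})$ if one wants the statement at the $\nu$-deformed level: there the extra twist is by $\nu^{\langle A, C\rangle_{a}}$, and since $\Ext^{\geq 1}_{\ce^{\add}} = 0$ one has $\langle A, C\rangle_{\ce^{\add}, a} = \dim_k \Hom_\ca(A,C)$, so the twisted split Hall algebra is the monoid algebra of $(\Iso(\ca),\oplus)$ twisted by $\nu^{\dim\Hom(-,-)}/|\Hom(-,-)| = \nu^{-\dim\Hom(-,-)}$ over $\F_q$ — still a skew polynomial algebra, now with $q$-commutation constants that are genuine powers of $\nu$. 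I would state the corollary for the untwisted algebra as written and add a remark for the twisted case.
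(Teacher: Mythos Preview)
Your proposal is correct and matches the paper's approach: the paper states Corollary~\ref{Cor:q-symm} without an explicit proof, treating it as an immediate specialization of Proposition~\ref{filtrations_and_char}(ii) to $\ce' = \ce^{\add}$ together with the direct computation of $\ch(\ce^{\add})$ from the definition of the Hall product. Your unwinding of the split Hall algebra and the additional remarks on the cocycle identity and the twisted version are more detailed than what the paper provides, but they are elaborations along the same line rather than a different argument.
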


\begin{proposition} \label{each_val_is_char}
Let $\cA$ be a $\Hom-$finite idempotent complete $\mathbb{F}_q-$linear additive category and let $\cE$ be an $\Ext^1-$finite exact structure on $\cA.$ Then for each $\cE-$valuation $w: \Iso(\cA) \to \mathbb{N},$ there exists $\cE' \in [\ce^{\add}, \ce]$ such that $w$ is $\cE'$-characteristic in $\cE.$
\end{proposition}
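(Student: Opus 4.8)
The plan is to realize $\cE'$ as the class of conflations of $\cE$ on which $w$ is exactly additive, and to check that this class is an exact structure. Since $w$ is $\cE^{\add}$-additive, it is a homomorphism of commutative monoids $(\Iso(\cA),\oplus)\to(\N,+)$, hence extends uniquely to a group homomorphism $\bar w\colon K_0^{\add}(\cA)\to\Z$. For a conflation $\delta\colon N\rightarrowtail X\twoheadrightarrow M$ of $\cE$ I set $d(\delta):=\bar w([N]-[X]+[M])=w(N)-w(X)+w(M)$. Because $w$ is an $\cE$-quasi-valuation and $\cE^{\add}$-additive, $w(X)\le w(M\oplus N)=w(M)+w(N)$, so $d(\delta)\ge 0$, with equality exactly when $w$ is additive on $\delta$. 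Let $\cE'$ be the class of conflations $\delta$ of $\cE$ with $d(\delta)=0$. It is isomorphism-closed, contains all split conflations (for which $d=0$ by $\cE^{\add}$-additivity), and is contained in $\cE$; hence, once we know it is an exact structure, $\cE'\in[\cE^{\add},\cE]$.

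The heart of the argument is the verification of the axioms [E0]--[E2$^{\mathrm{op}}$] for $\cE'$, which rests on the fact that $d$ is ``subadditive'' with respect to all the relevant operations, thanks to trivial identities in $K_0^{\add}(\cA)$ together with $d\ge0$. Axioms [E0] and [E0$^{\mathrm{op}}$] are immediate, since the split conflations $E\rightarrowtail E\twoheadrightarrow 0$ and $0\rightarrowtail E\twoheadrightarrow E$ have $d=0$. For [E1], let $i_1\colon A\rightarrowtail B$ and $i_2\colon B\rightarrowtail C$ be inflations underlying conflations $\delta_1\colon A\rightarrowtail B\twoheadrightarrow M_1$ and $\delta_2\colon B\rightarrowtail C\twoheadrightarrow M_2$ of $\cE'$, let $\delta\colon A\rightarrowtail C\twoheadrightarrow M$ be the conflation of $\cE$ with inflation $i_2i_1$, and let $\delta_3\colon M_1\rightarrowtail M\twoheadrightarrow M_2$ be the conflation that $\cE$ induces on the cokernels (a standard property of exact categories, see e.g.\ \cite{Buh}); additivity of $\bar w$ then gives $d(\delta)+d(\delta_3)=d(\delta_1)+d(\delta_2)=0$, whence $d(\delta)=0$ since $d(\delta_3)\ge0$. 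For [E2], push a conflation $\delta\colon N\rightarrowtail X\twoheadrightarrow M$ of $\cE'$ out along $h\colon N\to N'$; writing $\sigma\colon N\rightarrowtail X\oplus N'\twoheadrightarrow X''$ for the standard pushout conflation and $h_*\delta\colon N'\rightarrowtail X''\twoheadrightarrow M$ for the pushed-out conflation, additivity of $\bar w$ gives $d(h_*\delta)+d(\sigma)=d(\delta)=0$ with $d(\sigma)\ge0$, so $d(h_*\delta)=0$. The axioms [E1$^{\mathrm{op}}$] and [E2$^{\mathrm{op}}$] follow dually, using the induced conflation on the kernel side and the standard pullback conflation $X'\rightarrowtail X\oplus M'\twoheadrightarrow M$. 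Thus $\cE'$ is an exact structure with $\cE'\in[\cE^{\add},\cE]$.

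It then remains to observe that $w$ is $\cE'$-characteristic in $\cE$, which holds essentially by construction. Every conflation of $\cE'$ has $d=0$, i.e.\ $w$ is additive on it, so $w$ is $\cE'$-additive. If $\delta\colon N\rightarrowtail X\twoheadrightarrow M$ is a conflation in $\cE\setminus\cE'$, then $d(\delta)\ne0$, and since $d(\delta)\ge0$ always ($w$ being an $\cE$-valuation) this forces $d(\delta)>0$, that is, $w(X)<w(M)+w(N)$. This is precisely condition~(v), so $w$ is $\cE'$-characteristic in $\cE$, as required.

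I expect the one delicate point to be the bookkeeping in the verifications of [E1], [E2] and their duals: one must quote the precise forms of the induced ($3\times3$-type) conflation and of the pushout and pullback conflations in an exact category, and keep careful track of signs. No genuinely new structural input is needed. One can also package the same computation functorially: applying $\bar w$ to the class in $K_0^{\add}(\cA)$ that the projective resolution~(\ref{proj_res_contra}) attaches to a defect $\delta^{\sharp}$ yields a non-negative function on $\eff\cE$ which is additive on short exact sequences, so its vanishing locus is a Serre subcategory of $\eff\cE$; by Corollary~\ref{classif_interval} this Serre subcategory corresponds to the desired $\cE'\in[\cE^{\add},\cE]$.
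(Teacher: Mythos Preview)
Your proof is correct. Your primary route differs from the paper's: you verify the exact-category axioms for $\cE'=\{\delta\in\cE\mid d(\delta)=0\}$ by hand, using the identities $d(\delta)+d(\delta_3)=d(\delta_1)+d(\delta_2)$ (for [E1]) and $d(h_*\delta)+d(\sigma)=d(\delta)$ (for [E2]) together with the nonnegativity of $d$. The paper instead defines $\widetilde{w}(\delta^\sharp):=w(A)-w(B)+w(C)$ on the category $\eff\cE$ of defects, checks well-definedness via Schanuel's lemma and additivity on short exact sequences via the Horseshoe lemma, and then invokes the classification result (Corollary~\ref{classif_interval}) to conclude that the vanishing locus of $\widetilde{w}$ is a Serre subcategory corresponding to some $\cE'\in[\cE^{\add},\cE]$. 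Your final paragraph sketches exactly this functorial argument, so you have in fact recovered the paper's proof as well. Your elementary approach has the advantage of being self-contained and not relying on the classification theorem, while the paper's approach makes transparent that what is really at stake is the Serre-subcategory structure of $\eff\cE$, which is the recurring theme of the paper.
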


\begin{proof}
Define a function $\widetilde{w}: \Iso (\eff \cE) \to \mathbb{N}$ by the rule
$$\widetilde{w}(\delta^{\sharp}) := w(A) - w(B) + w(C),$$
whenever there exists a conflation $\delta$ in $\ce$ of the form 
$$A \rightarrowtail B \twoheadrightarrow C.$$
Recall that
$$0 \to \Hom(-, A) \overset{\Hom(-, f)}\longrightarrow \Hom(-, B) \overset{\Hom(-, g)}\longrightarrow \Hom(-, C) \to \delta^{\sharp}  \to 0$$ is a projective resolution of $\delta^{\sharp}.$ By the Yoneda lemma combined with Schanuel's lemma, the function $\widetilde{w}(\delta^{\sharp})$   well-defined. That means that if there is another conflation $\gamma$ of the form 
$$A' \rightarrowtail B' \twoheadrightarrow C',$$
such that $\delta^{\sharp} \overset\sim\to \gamma^{\sharp},$ then 
$w(A) - w(B) + w(C) = w(A') - w(B') + w(C'),$
so the alternating sum defining $\widetilde{w}$ does not depend on the choice of conflation.
By the Horseshoe lemma, we have 
$$\widetilde{F} + \widetilde{H} = \widetilde{G}$$ for each short exact sequence 
$$0 \to F \to G \to 0$$
in $\eff \ce.$
Then the kernel of $\widetilde{w}$ is a Serre subcategory in $\eff \cE$. Then by Theorem \ref{classif_general:body}, it canonically defines an exact structure $\cE' \in [\ce^{\add}, \ce]$ on $\cA$: a conflation $\mu$  in $\ce$ is also a conflation in $\ce'$ if and only if $\widetilde{w}(\mu^{\sharp})=0.$
By definition of $\ce',$ $w$ is $\ce'$-characteristic in $\ce.$
\end{proof}

\begin{theorem} \label{each_degen_is_HA:body}
Let $\cA$ be a $\Hom-$finite idempotent complete $\mathbb{F}_q-$linear additive category and let $\cE$ be an $\Ext^1-$finite exact structure on $\cA.$ Then for each $\cE-$valuation $w: \Iso(\cA) \to \mathbb{N},$ the associated graded of the filtration induced by $w$ on the Hall algebra $\cH(\cE)$ is the Hall algebra $\cH(\cE'),$ for some $\cE' \in [\ce^{\add}, \ce].$
\end{theorem}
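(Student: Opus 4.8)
The plan is to deduce this as a formal consequence of the two preceding propositions, so the work is already essentially done by Proposition~\ref{each_val_is_char} and Proposition~\ref{filtrations_and_char}. Given an $\cE$-valuation $w \colon \Iso(\cA) \to \mathbb{N}$, the first step is to apply Proposition~\ref{each_val_is_char} to obtain an exact structure $\cE' \in [\ce^{\add}, \ce]$ on the same additive category $\cA$ for which $w$ is $\cE'$-characteristic in $\cE$. Since $\cE' < \cE$, the $\Ext^1$-spaces of $\cE'$ embed into those of $\cE$, hence $\cE'$ is $\Ext^1$-finite and the Hall algebra $\cH(\cE')$ is well-defined; the category $\cA$ is $\Hom$-finite by hypothesis, so $\cH(\cE')$ satisfies the finiteness requirements of Section~\ref{Sec:3}.

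The second step is to observe that the filtration referred to in the statement is exactly $\cF^w_{\bullet}$, where $\cF^w_n$ is spanned by the classes $[M]$ with $w(M) \leq n$: because $w$ is an $\cE$-valuation, Proposition~\ref{filtrations_and_char}(i) guarantees that $\cF^w_{\bullet}$ is an algebra filtration on $\cH(\cE)$ with the basis $\{[A] \mid A \in \Iso(\cA)\}$ compatible with it. Finally, since $w$ is $\cE'$-characteristic in $\cE$ by the choice of $\cE'$ in the first step, Proposition~\ref{filtrations_and_char}(ii) identifies the associated graded algebra $\gr_{\cF^w} \cH(\cE)$ with $\cH(\cE')$, which is precisely the assertion of the theorem.

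The only genuine mathematical content sits inside Proposition~\ref{each_val_is_char}, namely checking that $\widetilde w(\delta^{\sharp}) := w(A) - w(B) + w(C)$ is a well-defined function on $\Iso(\eff\cE)$ (via Yoneda together with Schanuel's lemma), that it is additive on short exact sequences in $\eff\cE$ (via the Horseshoe lemma), and hence that its kernel is a Serre subcategory of $\eff\cE$ to which Theorem~\ref{classif_general:body} can be applied. Since all of this is already available, I do not expect any further obstacle for the present theorem; the one point worth spelling out is simply that the $\cE'$ produced abstractly by the Serre-subcategory correspondence is the same $\cE'$ whose Hall algebra appears as the associated graded, which is immediate from the construction of $\cE'$ as $\ce(\ker\widetilde w)$ and the definition of $\cE'$-characteristic functions.
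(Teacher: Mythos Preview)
Your proof is correct and follows exactly the same approach as the paper, which simply states that the theorem follows directly from Proposition~\ref{each_val_is_char} combined with Proposition~\ref{filtrations_and_char}. The additional remarks you make (that $\cE'$ inherits $\Ext^1$-finiteness from $\cE$, and the recap of the mechanism inside Proposition~\ref{each_val_is_char}) are helpful elaborations but not required for the argument.
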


\begin{proof}
This follows directly from Proposition \ref{each_val_is_char} combined with Proposition \ref{filtrations_and_char}.
\end{proof}

\subsection{Admissible case}

For the categories of representations of Dynkin quivers, all $\ce^{\max}-$valuations and all $\ce^{\mathrm{add}}-$characteristic functions in $\ce^{\max}$ were classified in \cite{FFR1}. The proof uses only the existence of AR conflations in $\ce^{\max}$ and the property $\Ex(\ce^{\max}) = \AR(\ce^{\max})$ for these categories. With minor changes, we can generalize this classification to all exact structures on locally finite categories. If we restrict ourselves only to functions $w$ satisfying certain additional conditions, we prove similar results even for admissible exact structures beyond the locally finite case. For $X\in\cA$, we set $w_X:\Iso(\cA)\to\mathbb{N}$ to be the function $w_X(M):= \dim \Hom_\cA(X, M)$.

\begin{lemma} (cf. \cite[Theorem 4 (1)]{FFR1}) \label{max_adm}

Let $\cA$ be a $\Hom-$finite idempotent complete $k-$linear additive category. 
\begin{itemize}
\item[(i)] 
The function $w_X$ is an $\ce^{\max}-$valuation, for each $X \in \cA.$ If $X \in \cp(\cE),$ then $w_X$ is $\ce-$additive.
\item[(ii)]
Suppose that for $X \in \ind(\ca) \backslash \ind(\cp(\ce^{\max})),$ there exists an AR conflation in $\ce^{\max}$ ending at $X$:
\begin{align} \label{AR:thm_max_adm}
\tau X  \rightarrowtail T \twoheadrightarrow X.
\end{align}
Then for each $Y \in \ind(\cA),$ we have
$$w_Y(X) - w_Y(T) + w_Y(\tau X) = \delta_X^Y.$$
Dually, suppose that there exists an AR conflation in $\ce^{\max}$ starting at $X$:
\begin{align} \label{AR:thm_max_adm2}
X  \rightarrowtail U \twoheadrightarrow \tau^{-1} X.
\end{align}
Then for each $Y \in \ind(\cA),$ we have
$$w_Y(X) - w_Y(T) + w_Y(\tau^{-1} X) = \delta_X^Y.$$
\item[(iii)]
Any linear combination $\sum\limits_{V \in \ind(\ca)} a_V w_V$ with $a_V \in \mathbb{Z} \forall V \in \ind(\ca)$ is additive, whenever it is well-defined. If all $a_V$ are non-negative, it is an $\ce^{\max}-$valuation.
If $\ca$ is locally finite, each such formal sum is well-defined.
\end{itemize}
\end{lemma}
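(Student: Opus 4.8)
The plan is to reduce all three parts to elementary exactness properties of representable functors. For part (i), one notes first that $w_X$ is $\mathbb{N}$-valued by $\Hom$-finiteness and is additive on finite direct sums because $\Hom_\cA(X,-)$ is, hence $\ce^{\add}$-additive automatically. For the quasi-valuation inequality, given a conflation $A \overset{f}{\rightarrowtail} B \overset{g}{\twoheadrightarrow} C$ in $\ce^{\max}$, left-exactness of $\Hom_\cA(X,-)$ provides an exact sequence $0 \to \Hom_\cA(X,A) \to \Hom_\cA(X,B) \to \Hom_\cA(X,C)$, so $\Hom_\cA(X,B)/\Hom_\cA(X,A)$ embeds into $\Hom_\cA(X,C)$ and hence $w_X(B) \le w_X(A) + w_X(C) = w_X(A \oplus C)$. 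Thus $w_X$ is an $\ce^{\max}$-valuation (and, by the general remark after the definitions, an $\ce$-valuation for every exact structure $\ce$). If moreover $X \in \cp(\ce)$, then $\Ext^1_\ce(X,-) = 0$, so by the long exact sequence associated to an $\ce$-conflation the map $\Hom_\cA(X,B) \to \Hom_\cA(X,C)$ is surjective and the inequality above becomes an equality, i.e.\ $w_X$ is $\ce$-additive.

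For part (ii), apply Theorem \ref{AR:def/thm} to the AR conflation (\ref{AR:thm_max_adm}): the four-term complex
$$0 \to \Hom_\cA(-,\tau X) \to \Hom_\cA(-,T) \to \Hom_\cA(-,X) \to S_X \to 0$$
is exact, being the minimal projective resolution of $S_X$. Evaluating at an indecomposable $Y$ yields an exact sequence of finite-dimensional $k$-vector spaces, so equating its alternating sum of dimensions to zero gives $w_Y(\tau X) - w_Y(T) + w_Y(X) = \dim_k S_X(Y)$; by Proposition \ref{supp_simple} the right-hand side is $\delta_X^Y$, which rearranges to the claim. The dual identity follows by the same argument applied to the AR conflation (\ref{AR:thm_max_adm2}), regarded as an AR conflation ending at the indecomposable $\tau^{-1}X$: Theorem \ref{AR:def/thm} then supplies the exact complex $0 \to \Hom_\cA(-,X) \to \Hom_\cA(-,U) \to \Hom_\cA(-,\tau^{-1}X) \to S_{\tau^{-1}X} \to 0$, and one again evaluates at $Y$, takes the alternating sum, and invokes Proposition \ref{supp_simple}. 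This part requires the most care: one must apply Theorem \ref{AR:def/thm} with the correct orientation — the simple functor attached to an AR conflation is indexed by the indecomposable at its deflation end — and keep in mind that it is the covariant functor $\Hom_\cA(Y,-)$ that is being evaluated, so the Kronecker symbol on the right records precisely whether $Y$ is isomorphic to that indecomposable.

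For part (iii), additivity of $w := \sum_{V} a_V w_V$ on direct sums is immediate from the additivity of each $w_V$ proved in (i), wherever the sum is defined. If all $a_V \ge 0$, then $w$ is $\mathbb{N}$-valued and, being a non-negative linear combination of the $\ce^{\max}$-quasi-valuations $w_V$, it still satisfies $w(B) \le w(A) + w(C)$ for every conflation $A \rightarrowtail B \twoheadrightarrow C$ in $\ce^{\max}$; together with additivity this makes $w$ an $\ce^{\max}$-valuation. Finally, if $\cA$ is locally finite, then for a fixed object $M$ one has $w_V(M) = \dim_k \Hom_\cA(V,M) \ne 0$ only for $V$ in the finite set $\supp(\Hom_\cA(-,M))$ (finite for every object, not only for indecomposables, since $\cA$ is Krull--Schmidt), so $w(M) = \sum_V a_V w_V(M)$ has only finitely many nonzero summands and the formal sum is well-defined.
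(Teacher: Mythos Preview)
Your proof is correct and follows essentially the same route as the paper's: part (i) via left-exactness of $\Hom_\cA(X,-)$ and the long exact sequence, part (ii) via the defect sequence of an AR conflation together with Proposition \ref{supp_simple}, and part (iii) by linearity and the definition of local finiteness. You simply spell out what the paper compresses into one or two sentences.

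One point to flag in the dual half of (ii): your argument via the contravariant defect of (\ref{AR:thm_max_adm2}) is the natural one for $w_Y = \dim\Hom_\cA(Y,-)$, but it produces $\dim S_{\tau^{-1}X}(Y) = \delta_{\tau^{-1}X}^{Y}$, not the $\delta_X^Y$ written in the statement---exactly as your own parenthetical remark about ``the indecomposable at its deflation end'' predicts. The paper instead invokes the covariant defect $S^X$, which does give $\delta_X^Y$, but that defect sequence computes alternating sums of $\dim\Hom_\cA(-,Y)$ rather than of $w_Y$. In short, the stated dual identity carries a typo (the right-hand side should be $\delta_{\tau^{-1}X}^Y$, or equivalently the alternating sum should be taken with the variables in the other slot); your computation is the honest one, and you should say explicitly which Kronecker delta you actually obtain.
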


\begin{proof}
Part (i) follows from the existence of long exact sequences. The alternating sum 
$w_Y(X) - w_Y(T) + w_Y(\tau X)$ in (ii) is equal to $\dim(S_X)$, since $S_X$ is the contravariant defect of the conflation (\ref{AR:thm_max_adm}). Dually, $w_Y(X) - w_Y(T) + w_Y(\tau^{-1} X)$ equals $\dim(S^X)$. Part (ii) then follows from Remark \ref{supp_simple}.

In (iii), the fact that the sum $\sum\limits_{V \in \ind(\ca)} a_V w_V$ is additive whenever it is well-defined follows from (i).
When all $a_V$ are non-negative, the sum is an $\ce^{\max}-$valuation again by (i). If $\ca$ is locally finite, $\supp(\Hom(-, X))$ is finite for any object $X,$ and so only finitely many terms contribute to the sum. This means the sum is well-defined for any choice of $a_V.$
\end{proof}

\begin{theorem} \label{degen:body}
Let $\cA$ be a $\Hom-$finite idempotent complete $k-$linear additive category and $\ce$ be an admissible exact structure on $\cA$. Let $\cE'<\cE$ be a smaller exact structure. Suppose 
\begin{align} \label{supp}
\supp \Hom(-, M)|_{\cp(\ce') \backslash(\cp(\ce))} < \infty, \quad \forall M \in \cA.
\end{align}
Then the function
\begin{equation} \label{weight_function_relative}
w_{\ce, \ce'}:  \Iso(\cA) \to \mathbb{N}, \quad\quad w_{\cE,\cE'} = \sum\limits_{P \in \ind(\cp(\ce'))\backslash \ind(\cp(\ce))} w_P
\end{equation}
is well-defined and $\ce'-$characteristic in $\ce$. It defines an algebra filtration on $\ch(\ce)$ whose associated graded is $\ch(\ce').$
\end{theorem}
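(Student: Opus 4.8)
The plan is to verify that $w_{\cE,\cE'}$ is a well-defined function $\Iso(\cA)\to\N$ which is $\cE'$-characteristic in $\cE$; once this is done, the last assertion follows immediately from Proposition \ref{filtrations_and_char}(i)--(ii), an $\cE'$-characteristic function being in particular an $\cE$-valuation. Write $I:=\ind(\cp(\cE'))\setminus\ind(\cp(\cE))$ for the index set of the sum \eqref{weight_function_relative}. Condition \eqref{supp} says precisely that for each $M$ only finitely many $P\in I$ satisfy $\Hom_{\cA}(P,M)\neq 0$, so $w_{\cE,\cE'}(M)=\sum_{P\in I}\dim\Hom_{\cA}(P,M)$ is a finite sum of natural numbers and $w_{\cE,\cE'}$ is well-defined with values in $\N$; moreover, being a non-negatively-weighted and (by \eqref{supp}) well-defined formal sum of the functions $w_P$, it is an $\cE^{\max}$-valuation by Lemma \ref{max_adm}(i),(iii). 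If $\delta\colon N\rightarrowtail X\twoheadrightarrow M$ is a conflation in $\cE'$, every $P\in I$ is $\cE'$-projective, so $\Hom_{\cA}(P,-)$ is exact on $\delta$ and $w_P(N)-w_P(X)+w_P(M)=0$; summing over the finitely many relevant $P$ gives $w_{\cE,\cE'}(N)-w_{\cE,\cE'}(X)+w_{\cE,\cE'}(M)=0$, i.e.\ $w_{\cE,\cE'}$ is $\cE'$-additive (this is Lemma \ref{max_adm}(i) for each $w_P$, with $\cE$ replaced by $\cE'$).

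It remains to prove the strict inequality on $\cE\setminus\cE'$, which is the heart of the matter. For an arbitrary conflation $\delta\colon N\rightarrowtail X\twoheadrightarrow M$ in $\cE$, evaluating the projective resolution \eqref{proj_res_contra} of the contravariant defect $\delta^{\sharp}$ at an object $P$ yields the exact sequence $0\to\Hom(P,N)\to\Hom(P,X)\to\Hom(P,M)\to\delta^{\sharp}(P)\to 0$ of finite-dimensional spaces, so $w_P(N)-w_P(X)+w_P(M)=\dim\delta^{\sharp}(P)$, and hence
$$w_{\cE,\cE'}(N)-w_{\cE,\cE'}(X)+w_{\cE,\cE'}(M)=\sum_{P\in I}\dim\delta^{\sharp}(P).$$
I must show this is $>0$ when $\delta\in\cE\setminus\cE'$, i.e.\ that $\delta^{\sharp}(Z)\neq 0$ for at least one $Z\in I$. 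First, by Theorem \ref{Enomoto_classif}, $\cE'$ consists exactly of the kernel--cokernel pairs whose contravariant defect lies in $\eff\cE'$; since $\delta$ is a conflation in $\cE$ (hence a kernel--cokernel pair) but not in $\cE'$, we get $\delta^{\sharp}\in\eff\cE$ and $\delta^{\sharp}\notin\eff\cE'$. Second, since $\cE$ is \emph{admissible}, $\delta^{\sharp}$ has finite length in $\eff\cE$. Third, $\eff\cE'$ is a Serre subcategory of $\eff\cE$ (Corollary \ref{classif_interval}), so as $\delta^{\sharp}\notin\eff\cE'$, at least one composition factor $S_Z$ of $\delta^{\sharp}$ does not lie in $\eff\cE'$. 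Fourth, by Corollary \ref{AR:simple} (cf.\ Proposition \ref{Prop:Enomoto2:2.3}) the simple objects of $\eff\cE$, resp.\ of $\eff\cE'$, are the $S_W$ with $W\in\Ind(\cA)\setminus\ind(\cp(\cE))$, resp.\ $W\in\Ind(\cA)\setminus\ind(\cp(\cE'))$, and a simple of $\eff\cE$ that lies in $\eff\cE'$ is a simple of $\eff\cE'$; hence a composition factor of $\delta^{\sharp}$ outside $\eff\cE'$ has the form $S_Z$ with $Z\in\ind(\cp(\cE'))\setminus\ind(\cp(\cE))=I$. Fifth, by Proposition \ref{supp_simple} evaluation at $Z$ is exact, annihilates $S_W$ for every $W\neq Z$ and sends $S_Z$ to a one-dimensional space, so $\dim\delta^{\sharp}(Z)$ equals the multiplicity of $S_Z$ in $\delta^{\sharp}$, which is $\geq 1$. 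Therefore $\sum_{P\in I}\dim\delta^{\sharp}(P)\geq\dim\delta^{\sharp}(Z)\geq 1>0$, that is, $w_{\cE,\cE'}(X)<w_{\cE,\cE'}(N)+w_{\cE,\cE'}(M)$.

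With the three properties in hand, $w_{\cE,\cE'}$ is $\cE'$-characteristic in $\cE$; Proposition \ref{filtrations_and_char}(i) then produces the algebra filtration $\cF^{w_{\cE,\cE'}}_{\bullet}$ on $\cH(\cE)$, and part (ii) identifies its associated graded with $\cH(\cE')$. I expect the main obstacle to be the third and fourth steps of the strict-inequality argument: one must exhibit a composition factor of $\delta^{\sharp}$ indexed by an indecomposable lying specifically in $I$, and this is exactly where the admissibility of $\cE$ is indispensable, as it forces $\delta^{\sharp}$ to have finite length and so lets one exploit the Serre inclusion $\eff\cE'\subseteq\eff\cE$. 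As an alternative to the five steps above, one could appeal to $\Ex_+(\cE)=\AR_+(\cE)$ (Theorem \ref{Thm:Eno2}), writing $[N]-[X]+[M]\in K_0^{\mathrm{add}}(\cE)$ as a non-negative combination of classes $[\tau Z]-[T]+[Z]$ of AR conflations and using Lemma \ref{max_adm}(ii); but pinning down an AR conflation occurring with positive coefficient that does not already lie in $\cE'$ comes down to the same finite-length input.
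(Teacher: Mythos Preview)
Your proof is correct. It differs from the paper's organization in a useful way: the paper invokes $\Ex_+(\cE)=\AR_+(\cE)$ (Theorem \ref{Thm:Eno2}) to write $[N]-[X]+[M]$ as a non-negative combination $\sum b_U([\tau U]-[C_U]+[U])$ of AR conflation classes, then uses Lemma \ref{max_adm}(ii) to compute $w_{\cE,\cE'}$ on this sum as $\sum_{P\in I} b_P$, whereas you work directly with the defect $\delta^{\sharp}$ and its composition series in $\eff\cE$. These are two sides of the same coin: the coefficients $b_U$ in the paper's decomposition are exactly the composition multiplicities of $S_U$ in $\delta^{\sharp}$, and you rightly note the alternative at the end. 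Your route makes the role of admissibility (finite length of $\delta^{\sharp}$) more explicit, and it sidesteps a step left somewhat implicit in the paper's argument, namely that an AR decomposition supported only on $\cE'$-AR conflations forces $\delta\in\cE'$ (which, when unwound, is again the Serre-subcategory argument you gave).

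Two small imprecisions worth tightening. First, exactness of the evaluation functor $F\mapsto F(Z)$ on $\Mod\cA$ is not the content of Proposition \ref{supp_simple}; it is a general fact about functor categories (limits and colimits are computed pointwise). Proposition \ref{supp_simple} only supplies $S_W(Z)=0$ for $W\neq Z$ and $S_Z(Z)\neq 0$. Second, $S_Z(Z)=\End(Z)/\cj(Z,Z)$ is a division ring but need not be one-dimensional over $k$; however, nonvanishing is all you use, so $\dim\delta^{\sharp}(Z)\geq \dim S_Z(Z)\geq 1$ still gives the strict inequality.
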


\begin{proof}
By the condition \eqref{supp}, for each $M \in \cA$, only finitely many functions in the formal sum on the right hand side of \eqref{weight_function_relative} take non-zero value on $M$. Since $\ca$ is $\Hom-$finite, these values are well-defined (i.e. finite). Therefore, the function $w_{\ce, \ce'}$ is well-defined. 
It is clearly $\ce'-$additive. 
Since each term is an $\ce^{\max}-$valuation by Lemma \ref{max_adm}(i), each term is a $\ce-$valuation is well, and then so is the sum $w_{\ce, \ce'}$.

Let $ X \rightarrowtail Y \twoheadrightarrow Z$ be a conflation in $\ce.$ By Theorem \ref{Thm:Eno2}, we have a decomposition
$$[X] - [Y] + [Z] = \sum\limits_{U \in \ind(\ca) \backslash \ind(\cp(\ce))} ([\tau U] - [C] + [U]) b_U,$$
for some $b_U \in \mathbb{N}$, where 
$$\tau U  \rightarrowtail C \twoheadrightarrow U$$
is the AR conflation in $\ce$ ending at $U.$

Then we have
\begin{eqnarray*}
w_{\ce, \ce'}([X] - [Y] + [Z]) &=& \sum\limits_{U \in \ind(\ca) \backslash \ind(\cp(\ce))} w_{\ce, \ce'}([\tau U] - [C] + [U]) b_U \\
&=& \sum\limits_{\substack{U \in \ind(\ca) \backslash \ind(\cp(\ce)) \\ P \in \ind(\cp(\ce'))) \backslash \ind(\cp(\ce)}} w_P ([\tau U] - [C] + [U]) b_U \\
&=& \sum\limits_{\substack{U \in \ind(\ca) \backslash \ind(\cp(\ce)) \\  P \in \ind(\cp(\ce')) \backslash \ind(\cp(\ce))}} b_U \delta_P^U \\
&=& \sum\limits_{P \in  \ind(\cp(\ce')) \backslash \ind(\cp(\ce))} b_P.
\end{eqnarray*}

Since all $b_P \geq 0,$ we have 
$$w_{\ce,\cE'}([X] - [Y] + [Z]) = 0$$
if and only if for any $P \in  \ind(\cp(\ce')) \backslash \ind(\cp(\ce))$, $b_P = 0$.
This happens if and only if we have a decomposition 
$$[X] - [Y] + [Z] = \sum\limits_{U \in \ind(\ca) \backslash \ind(\cp(\ce'))} ([\tau U] - [C] + [U]) b_U,$$
for some $b_U \in \mathbb{N}$ and such that at least one of $b_U$ is non-zero. Applying Theorem \ref{Thm:Eno2}, we find that this happens if and only if the conflation 
$$ X \rightarrowtail Y \twoheadrightarrow Z$$
is a conflation in $\cE'.$ Then if this conflation is in $\ce \backslash \ce'$  instead, we have
$$w_{\ce, \ce'}([X] - [Y] + [Z]) > 0,$$
or, equivalently,
$$w_{\ce, \ce'}(Y) < w_{\ce, \ce'}(X) + w_{\ce, \ce'}(Z).$$
This proves that $w_{\ce, \ce'}$ is $\ce'-$characteristic in $\ce$. The rest follows from Proposition \ref{filtrations_and_char}.
\end{proof}

By our alternative proof of Theorem \ref{ZZ}, if $\cA$ is locally finite, then any exact structure on it is admissible. By definition of local finiteness, condition (\ref{supp}) is satisfied for all pairs $\cE' < \ce$ of exact structures on $\cA.$ Therefore, the Theorem \ref{degen:body} applies to all pairs $\ce' < \ce$ and we get the following corollaries.

\begin{corollary}\label{Cor:DegenHall:locfin}
Let $\cA$ be a $\Hom-$finite, locally finite idempotent complete $k-$linear additive category. 
\begin{itemize}
\item 
Let $\ce$ be an exact structure on $\cA.$ Then the function
\begin{equation} \label{weight_function_absolute}
w_{\ce^{\max},\ce}:  \Iso(\cA) \to \mathbb{N}, \quad\quad w_\cE = \sum\limits_{P \in \ind(\cp(\ce)) \backslash \ind(\cp(\ce^{\max}))} w_P
\end{equation}
is well-defined and $\ce-$characteristic in $\ce^{\max}$. 
\item
Let $\ce$ be an exact structure on $\cA.$ Then the function
\begin{equation} 
w_{\ce, \ce^{\add}}:  \Iso(\cA) \to \mathbb{N}, \quad\quad w_{\ce, \ce^{\add}} = \sum\limits_{P \in \ind(\ca) \backslash \ind(\cp(\ce)} w_P
\end{equation}
is well-defined and $\ce^{\add}-$characteristic in $\ce$. 
\item 
For any pair of exact structures $\ce' < \ce,$ the function $w_{\ce,\ce'}$ defines an algebra filtration on $\cH(\cE)$ whose associated graded algebra is 
$\ch(\ce')$.
\end{itemize}
\end{corollary}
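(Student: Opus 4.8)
The plan is to deduce all three bullets from Theorem \ref{degen:body}, whose two hypotheses — admissibility of the larger exact structure and the support finiteness condition \eqref{supp} — are both automatically satisfied here thanks to local finiteness.

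First I would record two consequences of the standing hypotheses. Since $\cA$ is $\Hom$-finite, idempotent complete and $k$-linear, it is Krull-Schmidt, so Theorem \ref{ZZ} applies to \emph{every} exact structure on $\cA$: each such $\ce$ satisfies $\Ex_+(\ce) = \AR_+(\ce)$, and hence, being $\Hom$-finite, is admissible by Theorem \ref{Thm:Eno2}. In particular $\ce^{\max}$, $\ce^{\add}$, and every intermediate exact structure on $\cA$ are admissible. Second, local finiteness means that $\supp\Hom(-, M)$ is a finite set of indecomposables for each $M \in \cA$; a fortiori the restriction of this support to any full subcategory of $\cA$ is finite, so condition \eqref{supp} of Theorem \ref{degen:body} holds for \emph{every} pair $\ce' < \ce$ of exact structures on $\cA$.

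With these two observations in hand, each bullet follows by invoking Theorem \ref{degen:body} for a suitable pair. For the third bullet I would apply it verbatim to the given pair $\ce' < \ce$: its conclusion is precisely that $w_{\ce, \ce'}$ is well-defined, $\ce'$-characteristic in $\ce$, and defines an algebra filtration on $\cH(\ce)$ with associated graded $\ch(\ce')$. For the first bullet I would apply it to the pair $\ce < \ce^{\max}$; then the function it produces is $\sum_{P \in \ind(\cp(\ce)) \backslash \ind(\cp(\ce^{\max}))} w_P = w_{\ce^{\max}, \ce}$ of \eqref{weight_function_absolute}, which it declares $\ce$-characteristic in $\ce^{\max}$. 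For the second bullet I would apply it to the pair $\ce^{\add} < \ce$ (the case $\ce = \ce^{\add}$ being trivial); since every object of $\cA$ is projective in the split exact structure, $\ind(\cp(\ce^{\add})) = \ind(\cA)$, so the theorem's function becomes $\sum_{P \in \ind(\cA) \backslash \ind(\cp(\ce))} w_P = w_{\ce, \ce^{\add}}$, which is then $\ce^{\add}$-characteristic in $\ce$.

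Since the entire substance is already packaged in Theorems \ref{degen:body} and \ref{ZZ}, I do not expect any genuine obstacle; the only points that need attention are the bookkeeping of projective subcategories under the identifications $\ce \leftrightarrow \ce^{\max}$ and $\ce' \leftrightarrow \ce^{\add}$ — in particular the equality $\ind(\cp(\ce^{\add})) = \ind(\cA)$ — together with the remark that local finiteness of $\cA$ is exactly the feature that renders the auxiliary hypothesis \eqref{supp} vacuous and forces admissibility of every exact structure on $\cA$.
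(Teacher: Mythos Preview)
Your proposal is correct and follows essentially the same approach as the paper: the paper also notes that local finiteness forces every exact structure on $\cA$ to be admissible (via Theorem \ref{ZZ}) and makes condition \eqref{supp} automatic, so that Theorem \ref{degen:body} applies to every pair $\ce' < \ce$. Your version is simply more explicit about the bookkeeping for each bullet, including the identification $\ind(\cp(\ce^{\add})) = \ind(\cA)$.
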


\begin{corollary} \label{converse_statement1}
Let $\cA$ be a $\Hom-$finite, locally finite idempotent complete $\mathbb{F}_q-$linear additive category. For every set of pairwise non-isomorphic indecomposables $\ci \subset \Ind(\cA)$, for every set $\left\{a_V | V \in \ci \right\}$ of natural numbers ($a_V \in \mathbb{N} \quad \forall V \in \ci$), the function 
$$\sum_{V \in \ci} a_V w_V$$ 
is well-defined and induces an algebra filtration on $\cH(\cE^{\max})$. The associated graded algebra is
$\cH(\cE),$ where $\cE$ is the unique exact structure whose set of projectives is $\ci \cup \ind(\cp(\cE^{\max})).$ 
\end{corollary}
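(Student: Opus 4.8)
The plan is to deduce the statement as a direct application of the machinery already assembled, chiefly Theorem \ref{each_degen_is_HA:body}, Corollary \ref{Cor:DegenHall:locfin}, and the Enomoto classification for locally finite categories (Theorem \ref{ZZ} together with Theorem \ref{enomoto}). First I would observe that since $\cA$ is $\Hom-$finite, locally finite and idempotent complete, the alternative proof of Theorem \ref{ZZ} shows that \emph{every} exact structure on $\cA$ is admissible, and by Theorem \ref{enomoto}(v) there is a bijection between exact structures on $\cA$ and subsets of $\ind(\ca) \backslash \ind(\cp(\ce^{\max}))$, where an exact structure $\ce$ corresponds to $\ind(\cp(\ce)) \backslash \ind(\cp(\ce^{\max}))$. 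Hence, given an arbitrary set $\ci$ of pairwise non-isomorphic indecomposables, there is a unique exact structure $\ce$ with $\ind(\cp(\ce)) = \ci \cup \ind(\cp(\ce^{\max}))$, namely the one corresponding to the subset $\ci \cap (\ind(\ca) \backslash \ind(\cp(\ce^{\max})))$; note the intersection with the complement of $\ind(\cp(\ce^{\max}))$ is automatic, since any $V \in \ci$ already projective in $\ce^{\max}$ contributes nothing new.

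Next I would address well-definedness of $\sum_{V \in \ci} a_V w_V$: by local finiteness, $\supp(\Hom(-, X))$ is finite for every $X$, so for each fixed $M \in \Iso(\cA)$ only finitely many of the functions $w_V = \dim\Hom_\cA(V, -)$ take a nonzero value on $M$; since $\cA$ is $\Hom-$finite these values are finite, so the sum is a genuine function $\Iso(\cA) \to \mathbb{N}$. By Lemma \ref{max_adm}(iii), since all $a_V \geq 0$, this sum is an $\ce^{\max}-$valuation. Then Theorem \ref{each_degen_is_HA:body} already gives that its associated graded on $\cH(\ce^{\max})$ is $\cH(\ce'')$ for \emph{some} $\ce'' \in [\ce^{\add}, \ce^{\max}]$; the remaining work is to identify $\ce''$ with the specific $\ce$ above.

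To pin down $\ce''$, I would revisit the proof of Proposition \ref{each_val_is_char}: the structure $\ce''$ is the one whose conflations $\mu$ are exactly those with $\widetilde{w}(\mu^{\sharp}) = 0$, where $\widetilde{w}$ on $\eff \ce^{\max}$ sends a defect $\delta^{\sharp}$ to the alternating sum $w(A) - w(B) + w(C)$ of the corresponding conflation. It therefore suffices to compute $\widetilde{w}$ on the simple objects of $\eff \ce^{\max}$, which by Corollary \ref{AR:simple} are the $S_X$ for $X \in \ind(\ca) \backslash \ind(\cp(\ce^{\max}))$, realized as contravariant defects of AR conflations. By Lemma \ref{max_adm}(ii) and Proposition \ref{supp_simple}, for such an AR conflation ending at $X$ one has $w_V(X) - w_V(T) + w_V(\tau X) = \delta_X^V$, hence $\widetilde{w}(S_X) = \sum_{V \in \ci} a_V \delta_X^V = a_X$ if $X \in \ci$ and $0$ otherwise. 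Thus the Serre subcategory $\ker \widetilde{w}$ is generated by exactly those simples $S_X$ with $X \notin \ci$, so $\ce''$ has AR conflations precisely at the indecomposables not in $\ci$, and hence by Theorem \ref{classif_adm:body} its set of indecomposable projectives is $\ci \cup \ind(\cp(\ce^{\max}))$; that is, $\ce'' = \ce$. The hardest (or at least most delicate) step is this last identification, since one must be careful that $\widetilde{w}(S_X) = 0$ could in principle happen for $X \in \ci$ if $a_X = 0$ — but in that case $X$ is already not a ``new'' projective, and the stated description of $\ce$ via its projective set still holds, so no genuine obstruction arises; the rest is bookkeeping with the bijections of Theorem \ref{enomoto}(v) and Theorem \ref{classif_adm:body}.
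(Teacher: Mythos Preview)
Your approach is essentially the one implicit in the paper: the corollary is stated without proof, and the intended argument is to rerun the computation from the proof of Theorem~\ref{degen:body} with arbitrary nonnegative coefficients $a_V$ in place of all coefficients equal to $1$, using Lemma~\ref{max_adm}(ii) to evaluate the valuation on AR conflations. Your use of Proposition~\ref{each_val_is_char} to identify $\ce''$ via $\widetilde{w}$ on simple defects is a clean way to package this.

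Two small points. First, Theorem~\ref{enomoto}(v) is stated only for \emph{additively finite} categories, not locally finite ones; the correct reference for the bijection between exact structures and subsets of $\Ind(\ca)\setminus\Ind(\cp(\ce^{\max}))$ in your setting is Theorem~\ref{classif_adm:body} applied to $\ce^{\max}$, which is admissible by the argument in the proof of Theorem~\ref{ZZ}. You do invoke Theorem~\ref{classif_adm:body} later, so this is only a citation slip.

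Second, your resolution of the $a_X = 0$ case is not right. If $X \in \ci \setminus \Ind(\cp(\ce^{\max}))$ and $a_X = 0$, then $\widetilde{w}(S_X) = 0$, so $X$ is \emph{not} projective in $\ce''$; but the statement asserts $X \in \Ind(\cp(\ce))$. Thus $\ce'' \neq \ce$ and the corollary as literally stated fails in this case. Your claim that ``$X$ is already not a `new' projective'' does not resolve this: $X$ is new in $\ci$ but contributes nothing to the filtration. This is really a minor imprecision in the paper's statement (one should either require $a_V > 0$ for all $V \in \ci$, or replace $\ci$ by $\{V \in \ci : a_V > 0\}$ in the description of $\cp(\ce)$), but your proof should flag it rather than paper over it.
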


\subsection{Comparison with the approach of Berenstein-Greenstein} \label{BG}

In the work \cite{BG}, Berenstein and Greenstein proved a PBW-type theorem for Hall algebras associated to $\Hom-$ and $\Ext^1-$finite exact categories. To be more precise, they showed that the Hall algebra of such an exact category $\ce$ is generated by the classes of indecomposable objects. As in the PBW-theorem for universal enveloping algebras, the crucial point is to define an algebra filtration such that the associated graded algebra has a simple structure. In the situation of Hall algebras, there is no natural candidate for such a filtration. 

For this they considered a proset (preordered set) structure $\triangleleft$ on $\Iso(\cA)$ (recall that we have fixed an additive category $\mathcal{A}$) by requiring for any nonsplit short exact sequence $X \rightarrowtail Y \twoheadrightarrow Z$, $[Y]\triangleleft[X\oplus Z]$ and taking the transitive closure. A function from $\Iso(\cA)$ to $\mathbb{N}$ preserving this preorder, when it exists, gives a filtration on the Hall algebra of $\ce$. We denote $\mathcal{F}$ the set of such functions, which coincides with the set of $\cE$-quasi-valuations. Whether the set $\mathcal{F}$ is empty, or how to explicitly construct such a function, is \emph{a priori} not clear. 

The important observation in \cite[Proposition 4.8]{BG} is that 
$$f:\Iso(\cA) \to \mathbb{N},\ \  [M] \mapsto  |\mathrm{End}(M)|$$ 
is an $\cE$-quasi-valuation on $\cH(\cE)$. It is in general not an $\cE$-valuation: for two arbitrary objects $[M],[N]\in\Iso(\cA)$, we can not expect the equality in $f([M\oplus N])=f([M])+f([N])$ to hold. 

In this paper we considered a subset of $\mathcal{F}$ consisting of $\cE^{\add}$-additive functions, which we call $\cE$-valuations. Such functions can be classified using Auslander-Reiten theory. It is interesting to pursue that which kind of structures does $\mathcal{F}$ admit, as well as whether one can approximate it by $\cE^{\add}$-additive functions.

In the general setting, for two exact structures $\cE' < \cE$, we can look at functions $f:\Iso(\cA)\to\mathbb{N}$ satisfying $f(D)<f(B)$ for any two conflations 
$$A \rightarrowtail  B \twoheadrightarrow C\in \cE',\ \   A \rightarrowtail D \twoheadrightarrow C\in \cE\backslash \cE'.$$
Such functions will produce filtrations to degenerate from $\mathcal{H}(\cE)$ to $\mathcal{H}(\cE')$. The functions studied in \cite{BG} are special cases when $\cE' = \cE^{\add}$.

\subsection{Degenerations and twists}

Recall that the nilpotent part of a quantum group of a finite valued quiver $Q$ is isomorphic to (a subalgebra of) the twisted Hall algebra $\ch(\mod k Q)$ rather than to its untwisted counterpart. Therefore, it makes sense to investigate the effect of changes of exact structures on twisted Hall algebras. We begin with the following simple observation.

\begin{lemma}
Let $\cE' < \cE$ be a pair of exact structures on an additive category $\cA$. If the Euler form of $\cE$ is well-defined, it descends to a bilinear form on the Grothendieck $K_0(\mathcal{E'})$ of $\mathcal{E'}$,
denoted by the same symbol:
$$\left\langle \cdot, \cdot \right\rangle_{\cE}: K_0(\mathcal{E'}) \times K_0(\mathcal{E'}) \to \mathbb{Q}^{\times}.$$
\end{lemma}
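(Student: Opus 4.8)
The plan is to show that the multiplicative Euler form $\langle \cdot,\cdot\rangle_{\cE}$, which is defined on $\Iso(\cA)$ and already known to descend to $K_0(\cE)$, in fact also descends to the a priori larger group $K_0(\cE')$. Recall that $K_0(\cE')$ surjects onto $K_0(\cE)$ because $\cE' < \cE$ means $\cE'$ has fewer conflations, hence fewer Grothendieck relations; concretely, $K_0(\cE)$ is the quotient of $K_0(\cE')$ by the subgroup generated by the classes $[A]-[B]+[C]$ for conflations $A\rightarrowtail B\twoheadrightarrow C$ in $\cE\setminus\cE'$. So to prove the lemma it suffices to check that $\langle\cdot,\cdot\rangle_{\cE}$, viewed on the free group $K_0^{\mathrm{add}}(\cA)$ on isomorphism classes of objects, annihilates every such relator from both sides.

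The key step is therefore the following: for any conflation $\delta: A\rightarrowtail B\twoheadrightarrow C$ in $\cE$ (whether or not it lies in $\cE'$) and any object $Y\in\cA$, we have $\langle A,Y\rangle_{\cE}\cdot\langle C,Y\rangle_{\cE} = \langle B,Y\rangle_{\cE}$ and dually $\langle Y,A\rangle_{\cE}\cdot\langle Y,C\rangle_{\cE} = \langle Y,B\rangle_{\cE}$. But this is exactly the statement that the multiplicative Euler form descends to $K_0(\cE)$, which is already part of the setup in Section~\ref{Sec:3} (it follows from the long exact sequences in $\Ext^\ast_{\cE}$ associated to a conflation, using local homological finiteness of $\cE$ so that the alternating product is finite and multiplicative across the long exact sequence). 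Since every relator defining $K_0(\cE)$ as a quotient of $K_0(\cE')$ is of the form $[A]-[B]+[C]$ for a conflation of $\cE$, the form $\langle\cdot,\cdot\rangle_{\cE}$ is constant on the fibers of the quotient map $K_0(\cE')\twoheadrightarrow K_0(\cE)$, hence factors through a well-defined bilinear form on $K_0(\cE')$ — indeed one that itself factors further through $K_0(\cE)$.

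I would organize the proof in two short paragraphs: first recall that $\langle\cdot,\cdot\rangle_{\cE}$ is additive (multiplicatively) across conflations of $\cE$, citing the discussion preceding the Euler form in Section~\ref{Sec:3}; then observe that $K_0(\cE')\to K_0(\cE)$ is surjective with kernel generated by the classes of conflations in $\cE$ (indeed in $\cE\setminus\cE'$, since conflations already in $\cE'$ give relations already imposed in $K_0(\cE')$), and conclude that the form, being well-defined on $K_0(\cE)$, pulls back to a well-defined form on $K_0(\cE')$ along this surjection. One can phrase this as: the diagram of the bilinear form on $\Iso(\cA)^{\times 2}$ factors through $K_0(\cE)^{\times 2}$, and precomposing with the surjection $K_0(\cE')\to K_0(\cE)$ in each variable gives the asserted form on $K_0(\cE')$.

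There is essentially no obstacle here — the lemma is a bookkeeping consequence of facts already in place. The only mild point to be careful about is that ``the Euler form of $\cE$ is well-defined'' is precisely the hypothesis of local homological finiteness ($\Ext^p_{\cE}(A,B)$ finite for all $p$ and vanishing for $p\gg 0$), and this is a property of the pair $(\cA,\cE)$; one should note that it guarantees the alternating product over the long exact sequence telescopes correctly, which is what makes the form additive across conflations. The statement does not require any hypothesis on $\cE'$ beyond $\cE' < \cE$, and in particular $\cE'$ need not satisfy any finiteness condition of its own; the resulting form on $K_0(\cE')$ simply happens to factor through the smaller group $K_0(\cE)$.
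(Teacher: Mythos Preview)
Your proof is correct and follows essentially the same approach as the paper: both use that every conflation in $\cE'$ is a conflation in $\cE$, so the long exact sequences of $\Ext^i_{\cE}$-functors make the Euler form additive across $\cE'$-conflations. The paper's one-sentence proof simply states this directly, while you phrase it as pulling back along the surjection $K_0(\cE')\twoheadrightarrow K_0(\cE)$, but the content is identical.
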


\begin{proof}
Since each conflation in $\cE'$ is a conflation in $\cE,$ the statement follows from the existence of long exact sequences of $\Ext^i-$functors in $\cE.$
\end{proof}

As explained in Section 3, this implies that the Euler form $\left\langle \cdot, \cdot \right\rangle_{\cE}$ and its square root $\sqrt{\left\langle \cdot, \cdot \right\rangle_{\cE}}$ give well-defined deformations of the Hall algebra $\ch(\ce').$ We get the following reformulations of main results of Sections 4.1 and 4.2.

\begin{theorem}
Let $\cA$ be a $\Hom-$finite, idempotent complete and $k-$linear additive category. Let $\cE' < \cE$ be a pair of $\Ext^1-$finite admissible exact structures on $\cA.$ 
\begin{itemize}
    \item[(i)] If $w: \Iso(\cA) \to \mathbb{N}$ is an $\cE-$quasi-valuation, there exists a filtration $\cF_{\bullet}$ on $\cH_{tw}(\cE)$ where $\cF_n$ is spanned by the $[M]$ such that $w(M) \leq n,$ which is normalized if $w$ is so.
    \item[(ii)] If $w$ is $\ce'-$characteristic, the associated graded of $\cH_{tw}(\cE)$ with respect to this filtration is $\cH(\cE')$ twisted by the form $\sqrt{\left\langle \cdot, \cdot \right\rangle_{\cE}}.$ That is, the product of two classes $[A]$ and $[B]$ is given by the formula
    $$\sqrt{\left\langle A, B \right\rangle_{\cE}} \sum\limits_{C \in \Iso(\cA)} \frac{|\Ext_{\ce'}^1(A,B)_C|}{|\Hom(A, B)|} [C].$$
    \item[(iii)] The function $w_{\ce, \ce'}$ given by the formula (\ref{weight_function_relative}) is $\cE'$-characteristic and induces a filtration from item (ii).
    \item[(iv)] Any filtration on $\cH_{tw}(\cE)$ given by an $\cE-$valuation is the Hall algebra of a smaller exact structure, twisted by the form $\sqrt{\left\langle \cdot, \cdot \right\rangle_{\cE}}.$
\end{itemize}
\end{theorem}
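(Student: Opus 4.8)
The plan is to deduce the whole statement from the untwisted results of Sections 4.1--4.2, exploiting the fact that passing from $\cH(\cE)$ to $\cH_{tw}(\cE)$ merely rescales each Hall product by the factor $\nu^{\langle A,C\rangle_{\cE,a}} = \sqrt{\langle A,C\rangle_{\cE}}$, a \emph{nonzero} element of $\mathbb{Q}(\nu)$ that depends only on the classes $[A],[C]\in K_0(\cE)$. Thus $\sqrt{\langle A,C\rangle_{\cE}}$ is a fixed scalar attached to each ordered pair of basis vectors $([A],[C])$ and, crucially, it is independent of which middle terms $B$ occur in $[A]\diamond[C]$. Consequently every combinatorial feature of the Hall product used in Sections 4.1--4.2 — in particular the \emph{support} of $[A]\diamond[C]$ in the basis $\{[M]\}_{M\in\Iso(\cA)}$ — is literally unchanged by the twist.

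For (i): since $\cH_{tw}(\cE)$ and $\cH(\cE)$ share the same underlying space and the same distinguished basis, and $[A]*[C]$ is supported on the same set of $[B]$ as $[A]\diamond[C]$, the subspaces $\cF_n=\mathrm{span}\{[M]\mid w(M)\le n\}$ form a filtration of $\cH_{tw}(\cE)$ exactly when they form one of $\cH(\cE)$; the latter is Proposition \ref{filtrations_and_char}(i), together with the discussion of $\cE$-quasi-valuations in Section \ref{BG}, and normalization ($w(M)=0\Leftrightarrow M=0$) does not involve the twist. Parts (iii) and (iv) will then follow formally from (ii): Theorem \ref{degen:body} shows that under \eqref{supp} the function $w_{\cE,\cE'}$ of \eqref{weight_function_relative} is $\cE'$-characteristic in $\cE$, and Proposition \ref{each_val_is_char} shows that any $\cE$-valuation $w$ is $\cE''$-characteristic in $\cE$ for some $\cE''\in[\cE^{\add},\cE]$, which is moreover admissible (its defect category $\eff\cE''$ is a Serre subcategory of the finite-length category $\eff\cE$, by Corollary \ref{classif_interval}) and $\Ext^1$-finite (as $\Ext^1_{\cE''}\subseteq\Ext^1_{\cE}$). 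Note also that a $\cE'$-characteristic function in $\cE$ is in particular an $\cE$-valuation, so the filtration in (i) is an algebra filtration and $\gr\cH_{tw}(\cE)$ is an algebra.

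So the heart is (ii). I would rerun the associated-graded computation from the proof of Proposition \ref{filtrations_and_char}(ii), carrying the scalar along. Writing $[A]*[C]=\sqrt{\langle A,C\rangle_{\cE}}\sum_B \frac{|\Ext^1_{\cE}(A,C)_B|}{|\Hom(A,C)|}[B]$, the product $\gr[A]\cdot\gr[C]$ in $\cF_{w(A)+w(C)}/\cF_{w(A)+w(C)-1}$ retains only the $[B]$ with $w(B)=w(A)+w(C)$. Since $w$ is $\cE'$-characteristic in $\cE$, any fixed middle term $B$ admitting a conflation $C\rightarrowtail B\twoheadrightarrow A$ in $\cE\backslash\cE'$ satisfies $w(B)<w(A)+w(C)$, whereas $\cE'$-additivity forces $w(B)=w(A)+w(C)$ as soon as some conflation $C\rightarrowtail B\twoheadrightarrow A$ lies in $\cE'$; hence for the surviving $B$ \emph{all} such conflations lie in $\cE'$, so $|\Ext^1_{\cE}(A,C)_B|=|\Ext^1_{\cE'}(A,C)_B|$, and conversely every $B$ with $\Ext^1_{\cE'}(A,C)_B\ne\emptyset$ has $w(B)=w(A)+w(C)$. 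As $\Hom$-spaces do not depend on the exact structure and $\sqrt{\langle A,C\rangle_{\cE}}$ is constant in $B$, we obtain
$$\gr[A]\cdot\gr[C] = \sqrt{\langle A,C\rangle_{\cE}}\,\sum_B \frac{|\Ext^1_{\cE'}(A,C)_B|}{|\Hom(A,C)|}\,\gr[B],$$
which is precisely the product in $\cH(\cE')$ twisted by the form $\sqrt{\langle\cdot,\cdot\rangle_{\cE}}$ — well-defined on $K_0(\cE')$ by the Lemma opening this subsection — as claimed.

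I do not expect a serious obstacle. The only delicate point is the bookkeeping just described: that the scalar $\sqrt{\langle A,C\rangle_{\cE}}$ genuinely factors out of the graded product (it does, being constant in the middle term and a function of $K_0(\cE)$-classes only), and that the form twisting $\cH(\cE')$ must be the pullback along $K_0(\cE')\twoheadrightarrow K_0(\cE)$ of the Euler form of $\cE$ rather than an intrinsic Euler form of $\cE'$, since $\cE'$ need not be homologically finite. All finiteness conditions needed for $\cH_{tw}(\cE)$ to be defined and for $\langle\cdot,\cdot\rangle_{\cE}$ to descend are part of the standing hypotheses of this subsection.
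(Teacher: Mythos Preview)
Your proposal is correct and follows exactly the route the paper intends: the theorem is stated in Section~4.4 as a direct ``reformulation of the main results of Sections~4.1 and~4.2'', with no separate proof given, the only new ingredient being the Lemma that $\langle\cdot,\cdot\rangle_{\cE}$ descends to $K_0(\cE')$. Your observation that the twist is a nonzero scalar constant in the middle term, so that supports and hence the filtration and graded-algebra computations transfer verbatim from Proposition~\ref{filtrations_and_char}, Theorem~\ref{degen:body} and Proposition~\ref{each_val_is_char}, is precisely the intended argument; your remark that part~(iii) tacitly needs the finiteness condition~\eqref{supp} from Theorem~\ref{degen:body} for $w_{\cE,\cE'}$ to be well-defined is a fair caveat on the paper's statement rather than a flaw in your proof.
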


\begin{corollary} \label{Cor:DegenHall:locfin:twist}
Let $\cA$ be a $\Hom-$finite, locally finite, idempotent complete and $k-$linear additive category. Let $\cE$ be an $\Ext^1-$finite exact structure on $\cA.$ Then the function $w_{\ce, \ce^{\add}}$ defined in Corollary \ref{Cor:DegenHall:locfin} defines an algebra filtration on $\ch_{tw}(\ce)$ whose associated graded is  $\cH(\cE^{\add})$ twisted by the form $\sqrt{\left\langle \cdot, \cdot \right\rangle_{\cE}}.$  That is, the product of two classes $[A]$ and $[B]$ is given by the formula
    $$\frac{\sqrt{\left\langle A, B \right\rangle_{\cE}}} {|\Hom(A, B)|} [A \oplus B].$$
\end{corollary}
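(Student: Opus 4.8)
The plan is to deduce Corollary \ref{Cor:DegenHall:locfin:twist} as a special case of the preceding theorem. First I would observe that, by our alternative proof of Theorem \ref{ZZ}, every exact structure on a $\Hom-$finite, locally finite, idempotent complete $k-$linear additive category $\cA$ is admissible; in particular both $\ce$ and $\ce^{\add}$ are admissible, and the split structure $\ce^{\add}$ is visibly $\Ext^1-$finite. Hence $\ce^{\add} < \ce$ is a pair of $\Ext^1-$finite admissible exact structures on $\cA$, so the hypotheses of the theorem (in its version for $\ce' = \ce^{\add}$) are met.

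Next I would invoke Corollary \ref{Cor:DegenHall:locfin}: local finiteness guarantees that the finiteness condition \eqref{supp} holds for the pair $\ce^{\add} < \ce$ (the relevant support is contained in $\supp\Hom(-,M)$, which is finite by hypothesis), so $w_{\ce,\ce^{\add}} = \sum_{P \in \ind(\ca)\setminus\ind(\cp(\ce))} w_P$ is well-defined and $\ce^{\add}-$characteristic in $\ce$, by Theorem \ref{degen:body}. Then part (ii) of the preceding theorem, applied with $\ce' = \ce^{\add}$ and $w = w_{\ce,\ce^{\add}}$, shows that the associated graded of $\ch_{tw}(\ce)$ with respect to the induced filtration is $\ch(\ce^{\add})$ twisted by $\sqrt{\langle \cdot,\cdot\rangle_{\ce}}$.

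It then remains to identify this twisted algebra explicitly. In $\ce^{\add}$, the only conflations are the split ones $A \rightarrowtail A\oplus B \twoheadrightarrow B$, so for objects $A, B$ one has $\Ext^1_{\ce^{\add}}(A,B)_C \neq \emptyset$ only for $C \cong A\oplus B$, in which case it is a single point (the split extension); by the same reasoning as in Corollary \ref{Cor:q-symm}, the untwisted product in $\ch(\ce^{\add})$ is $[A]\diamond[B] = \frac{1}{|\Hom(A,B)|}[A\oplus B]$. Multiplying by the twist factor $\sqrt{\langle A,B\rangle_{\ce}}$ yields the stated formula $\frac{\sqrt{\langle A,B\rangle_{\ce}}}{|\Hom(A,B)|}[A\oplus B]$. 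I expect the main (though still modest) point to be checking that the form $\langle\cdot,\cdot\rangle_{\ce}$ indeed descends to $K_0(\ce^{\add})$, so that the twisted structure constant $\sqrt{\langle A,B\rangle_\ce}$ is a well-defined function of the isomorphism classes — but this is precisely the content of the lemma preceding the theorem applied to the pair $\ce^{\add} < \ce$, so no genuine obstacle remains.
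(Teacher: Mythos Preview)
Your proposal is correct and follows exactly the route the paper intends: the corollary is stated without proof because it is the immediate specialization of the preceding theorem to $\ce' = \ce^{\add}$, using Corollary~\ref{Cor:DegenHall:locfin} (and the alternative proof of Theorem~\ref{ZZ}) to guarantee admissibility and the well-definedness of $w_{\ce,\ce^{\add}}$ in the locally finite case. Your explicit identification of the product in the twisted $\ch(\ce^{\add})$ is precisely what is asserted in the statement and in Corollary~\ref{Cor:q-symm}.
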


If we have two hereditary exact categories and a bijection $F$ on the set of objects such that $| \Ext^1(A, B)_C| = |\Ext^1(FA, FB)_{FC}|,$ their Hall algebras are related by a well-defined twist
$$\frac{|\Hom(A, B)|}{|\Hom(FA, FB)|}$$ (these categories have isomorphic Grothendieck groups and this quotient descends from the sets of isomorphism classes to the Grothendieck groups). 

In particular, if we take $\cE, \cE'$ exact structures on the same hereditary category $\cA,$ with $\cE$ and $\cE'$ having only one AR sequence each: $A \rightarrowtail B \twoheadrightarrow C$ and $A' \rightarrowtail B' \twoheadrightarrow C',$
with all the terms being indecomposable, then $\cH(\cE)$ and $\cH(\cE')$ are related by a twist. In principle, even for a pair of different exact structures $\ce, \ce',$ this twist can be trivial. In this case, $\cH(\cE)$ and $\cH(\cE')$ are isomorphic. We discuss the minimal such example in Example \ref{disjoint_A2}.

For any pair of additive categories  with the same cardinality of the set of indecomposables, Hall algebras of their split exact structures are just $q$-symmetric polynomials on the same set of variables, but the powers of $q$ appearing in the commutation relations may be completely different. 

Under conditions of Corollary \ref{Cor:DegenHall:locfin:twist}, the associated graded of $\ch_{tw}(\ce)$ with respect to the filtration given by $w_{\ce, \ce^{\add}}$ is always an algebra of  $q$-symmetric polynomials in variables $\left\{[A] | A \in \Ind(\ca)\right\}$, but the powers of $q$ appearing in the commutation relations depend on the structure $\ce.$

\section{Polyhedral cones and degenerations of Hall algebras}\label{Sec:5}

Let $\cA$ be a $\Hom-$finite idempotent complete $k-$linear additively finite category and $\cE'<\cE$ be a pair of exact structures on $\cA$. For an exact structure $\cE$ on $\cA$, we denote $\mathcal{H}(\cE)$ the corresponding Hall algebra. Following \cite{FFR2} we define two polyhedral cones from these data. 

The first one is called the \emph{$K$-theoretic cone} of such pair of exact structures. 
Since $\cE'<\cE$, there exists a canonical surjection $K_0(\cE')\twoheadrightarrow K_0(\cE)$. We let $\Lambda^{\cE,\cE'}$ denote the kernel of this surjection. Since $K_0(\cE')$ is a free abelian group, so is $\Lambda^{\cE,\cE'}$. We set 
$$\Lambda^{\cE,\cE'}_{\mathbb{R}}:=\Lambda^{\cE,\cE'}\otimes_\mathbb{Z}\mathbb{R}.$$
Let $\cC^{\cE,\cE'}\subseteq\Lambda^{\cE,\cE'}_{\mathbb{R}}$ be the polyhedral cone generated by $[X]-[Y]+[Z]$ for $X \rightarrowtail Y \twoheadrightarrow Z$ a conflation in $\cE\setminus\cE'$. We set $\cC^\cE:=\cC^{\cE,\cE^{\mathrm{add}}}$. 
As a consequence of Theorem \ref{enomoto}, we have:

\begin{proposition}\label{Simplicial}
The extremal rays of the polyhedral cone $\cC^{\cE,\cE'}$ are given by $[X]-[Y]+[Z]$ for $X \rightarrowtail Y \twoheadrightarrow Z$ an AR-conflation in $\cE\setminus\cE'$, hence the cone $\cC^{\cE,\cE'}$ is simplicial.
\end{proposition}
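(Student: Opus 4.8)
The plan is to leverage Theorem \ref{enomoto}, in particular parts (ii) (existence of AR conflations) and (vi) ($\Ex_+(\ce) = \AR_+(\ce)$), to show that the generators $[X]-[Y]+[Z]$ coming from AR conflations in $\ce \setminus \ce'$ already generate $\cC^{\cE,\cE'}$ as a cone, and that they are \emph{linearly independent}, which together give that the cone is simplicial with exactly those extremal rays. First I would recall that, by Theorem \ref{enomoto}(vi) applied to $\ce$, every class $[X]-[Y]+[Z]$ of a conflation in $\ce$ lies in $\AR_+(\ce)$, i.e. is a \emph{non-negative} integer combination of classes of AR conflations in $\ce$. I would then argue that if the original conflation lies in $\ce \setminus \ce'$, the non-negative combination must involve at least one AR conflation that is itself in $\ce \setminus \ce'$: indeed, if all AR conflations appearing were in $\ce'$, then, running Theorem \ref{enomoto}(vi) (equivalently Theorem \ref{Thm:Eno2}) in the reverse direction for the admissible structure $\ce'$, the class would lie in $\Ex_+(\ce')$, forcing the conflation to lie in $\ce'$ — contradiction. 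Passing to the quotient $\Lambda^{\cE,\cE'}_{\mathbb{R}} = \ker(K_0(\ce') \twoheadrightarrow K_0(\ce)) \otimes \mathbb{R}$, the AR conflations lying in $\ce'$ contribute zero (their classes vanish in $\Lambda^{\cE,\cE'}$, since they are relations defining $K_0(\ce')$... more precisely, their defect lies in $\eff\ce'$, hence they are trivialized), so every generator $[X]-[Y]+[Z]$ of $\cC^{\cE,\cE'}$ is a non-negative combination of the images of AR conflations in $\ce \setminus \ce'$. This shows $\cC^{\cE,\cE'}$ is generated by these finitely many vectors.

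Next I would establish linear independence of the set $\{[X]-[Y]+[Z] : (X \rightarrowtail Y \twoheadrightarrow Z) \text{ an AR conflation in } \ce\setminus\ce'\}$ inside $\Lambda^{\cE,\cE'}_{\mathbb{R}}$. The key point is Corollary \ref{AR:simple}(ii): the contravariant defects of AR conflations in $\ce$ are precisely the simple objects of $\eff \ce$, and distinct AR conflations have distinct (hence non-isomorphic) simple defects. By Theorem \ref{classif_adm:body} and the admissibility of $\ce'$, the AR conflations lying in $\ce'$ correspond exactly to the simples of $\eff\ce'$, so the AR conflations in $\ce\setminus\ce'$ correspond to the simples of $\eff\ce$ that are \emph{not} in $\eff\ce'$. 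One then checks that the map sending a simple $S$ of $\eff\ce$ to the class in $K_0^{\mathrm{add}}(\ce)$ of (the alternating sum of) a projective resolution — which for $S = \delta^\sharp$ is exactly $[X]-[Y]+[Z]$ — identifies the relevant AR classes with a subset of a basis. Concretely, $K_0(\eff \ce)$ (Grothendieck group of the abelian category of defects, which has finite length by admissibility) is free on the simples $S_Z$, $Z \in \Ind(\ca)\setminus\ind(\cp(\ce))$, and the assignment $S_Z \mapsto [\tau Z] - [C] + [Z]$ (the class of the AR conflation) is a well-defined homomorphism $K_0(\eff\ce) \to K_0^{\mathrm{add}}(\ce)$ that is injective — this is essentially the statement that the AR conflations give linearly independent relations, which follows from Proposition \ref{supp_simple}: evaluating the defect $S_Z$ at an indecomposable $Y$ gives $\delta_Z^Y$, so the matrix expressing $\langle \text{AR class}, w_Y\rangle$ in terms of the $w_Y$ is the identity (this is exactly Lemma \ref{max_adm}(ii)). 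Restricting to the simples not in $\eff\ce'$ and projecting to $\Lambda^{\cE,\cE'}_{\mathbb{R}}$, we keep linear independence because those classes pair non-degenerately (via the $w_Y$ for $Y$ ranging over $\ind(\cp(\ce'))\setminus\ind(\cp(\ce))$, which by Theorem \ref{classif_adm:body} index exactly the relevant simples).

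Finally, a finitely generated cone generated by linearly independent vectors is by definition simplicial, and each generator spans an extremal ray (a vector in a minimal generating set of a simplicial cone is necessarily extremal), giving the stated description of the extremal rays. The main obstacle I anticipate is the bookkeeping in the second paragraph: one must carefully justify that AR conflations lying in $\ce'$ become zero in $\Lambda^{\cE,\cE'}$ while those in $\ce\setminus\ce'$ remain linearly independent after projection, i.e. that $\Lambda^{\cE,\cE'}_{\mathbb{R}}$ is precisely the span of the latter. This is where one needs to combine the $\Ex_+ = \AR_+$ property for \emph{both} $\ce$ and $\ce'$ (Theorem \ref{Thm:Eno2}) with the identification of $\ker(K_0(\ce')\to K_0(\ce))$ in terms of the simples of $\eff\ce$ that die in $\eff\ce'$; everything else is a routine pairing computation via Proposition \ref{supp_simple}.
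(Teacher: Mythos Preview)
Your proposal is correct and follows essentially the same strategy as the paper: generation of $\cC^{\cE,\cE'}$ by AR conflations via $\Ex_+(\ce)=\AR_+(\ce)$ (Theorem~\ref{enomoto}(vi)), then linear independence via the bijection between AR conflations and non-projective indecomposables. The paper's proof is much terser---it simply invokes Theorem~\ref{enomoto} for generation and then asserts that ``any non-projective indecomposable object appears in one and only one AR-conflation as the rightmost term'' for extremality---whereas you make both steps explicit by (a) projecting to $K_0(\cE')$ so that AR conflations in $\cE'$ vanish, and (b) pairing with the functions $w_Y$ for $Y\in\ind(\cp(\ce'))\setminus\ind(\cp(\ce))$ via Lemma~\ref{max_adm}(ii) to get an identity matrix. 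Your route is the same in spirit but fills in details the paper omits.

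One small remark: the intermediate sentence ``forcing the conflation to lie in $\ce'$ --- contradiction'' is not quite right (the class $[X]-[Y]+[Z]$ lying in $\Ex_+(\ce')$ does not force \emph{that particular} conflation to be in $\ce'$), and it is also unnecessary. You immediately give the clean argument anyway: project the non-negative AR decomposition from $K_0^{\mathrm{add}}$ to $K_0(\cE')$, where the $\ce'$-AR terms vanish. That alone shows $\cC^{\cE,\cE'}$ is generated by images of AR conflations in $\ce\setminus\ce'$.
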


\begin{proof}
According to Theorem \ref{enomoto} (v), the polyhedral cone $\cC^{\cE,\cE'}$ is generated by  $[X]-[Y]+[Z]$ for $X \rightarrowtail Y \twoheadrightarrow Z$ an AR-conflation in $\cE\setminus\cE'$. In order to show that these are extremal rays, it suffices to notice that since $\cA$ is additively finite, any non-projective indecomposable object appears in one and only one AR-conflation as the rightmost term.
\end{proof}

We look at the extremal case. Let $\Lambda:=\Lambda^{\cE^{\max},\cE^{\mathrm{add}}}$, $\Lambda_\mathbb{R}:=\Lambda^{\cE^{\max},\cE^{\mathrm{add}}}_\mathbb{R}$ and $\cC:=\cC^{\cE^{\max},\cE^{\mathrm{add}}}$ be the cone in $\Lambda_\mathbb{R}$. The polyhedral cone $\cC$ is simplicial of dimension $N-n$ where $N$ is the rank of $K_0^{\mathrm{add}}(\cA)$ and $n$ is the cardinality of $\mathrm{ind}(\mathcal{P}(\cE^{\max}))$. 

\begin{remark}\label{Rmk:Enomoto}
The fact that the exact structures on $\cA$ form a Boolean lattice (\cite{Enomoto1}, see also \cite[Theorem 5.7]{BHLR}) can be rephrased into: the face lattice of the simplicial cone $\cC$ is a Boolean lattice on $N-n$ elements.
\end{remark}

The second cone is called the \emph{degree cone} of the Hall algebra $\mathcal{H}(\cE)$. For simplicity we denote $\mathcal{I}:=\mathrm{ind}(\cA)$. By \cite[Theorem 1.1]{BG}, the Hall algebra $\mathcal{H}(\cE)$ is generated by $[M]$ for $M\in\mathcal{I}$. Any function $\mathbf{d}:\mathcal{I}\to\mathbb{R}$ gives rise to a filtered vector space structure on $\mathcal{H}(\cE)$ by imposing $\mathbf{d}(M)$ as the degree of $[M]$. For a pair of exact structures $\cE'<\cE$, the associated degree cone is defined by:
$$\mathcal{D}^{\cE,\cE'}:=\{\mathbf{d}\in\mathbb{R}^\mathcal{I}\mid \mathbf{d}\text{ induces an algebra filtration and }\mathrm{gr}_\mathbf{d}(\mathcal{H}(\cE)) = \mathcal{H}(\cE')\},$$
where $\mathrm{gr}_\mathbf{d}$ stands for taking the associated graded algebra with respect to the filtration induced by $\mathbf{d}$. We set $\mathcal{D}^\cE:=\mathcal{D}^{\cE,\cE^{\mathrm{add}}}$ and $\mathcal{D}:=\mathcal{D}^{\cE^{\max}}$. The closure $\overline{\mathcal{D}^{\cE,\cE'}}$, when being non-empty, is a polyhedral cone.

\begin{theorem}\label{Thm:cone}
We have
$$\mathcal{D}^{\cE,\cE'}=\{\varphi\in (K_0^{\mathrm{add}}(\cA)\otimes_\mathbb{Z}\mathbb{R})^*\mid \text{for any }x\in\cC^{\cE,\cE'}, \varphi(x)>0;\ \text{for any }y\in\cC^{\cE'},\ \varphi(y)=0\}.$$
\end{theorem}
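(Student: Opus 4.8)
The plan is to reduce the statement to the notion of an $\cE'$-characteristic function from Section 4 and then to translate that notion into the two cone conditions. Since $\cA$ is Krull--Schmidt, the set $\mathcal{I}=\ind(\cA)$ is a basis of $K_0^{\mathrm{add}}(\cA)$, so a degree function $\mathbf{d}\in\mathbb{R}^{\mathcal{I}}$ is the same datum as a linear functional $\varphi\in(K_0^{\mathrm{add}}(\cA)\otimes_{\mathbb{Z}}\mathbb{R})^*$; under this identification the degree of a class $[M]$ in the filtration attached to $\mathbf{d}$ is $\varphi([M])$, so $\mathbf{d}$ is automatically additive on direct sums. I claim $\mathbf{d}\in\mathcal{D}^{\cE,\cE'}$ if and only if $\varphi$, regarded as a real-valued function on $\Iso(\cA)$, is $\cE'$-characteristic in $\cE$, i.e. $\varphi([X]-[Y]+[Z])=0$ for every conflation of $\cE'$ and $\varphi([X]-[Y]+[Z])>0$ for every conflation of $\cE\setminus\cE'$. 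One implication is immediate from Section 4: a $\cE'$-characteristic $\varphi$ is in particular an $\cE$-valuation, so the real-valued versions of Proposition \ref{filtrations_and_char}(i),(ii) (whose proof, borrowed from \cite{FFR1}, is purely combinatorial and insensitive to the value group, and can anyway be reduced to the rational case by rescaling) show that $\mathbf{d}$ induces an algebra filtration on $\mathcal{H}(\cE)$ with $\mathrm{gr}_{\mathbf{d}}\mathcal{H}(\cE)=\mathcal{H}(\cE')$, i.e. $\mathbf{d}\in\mathcal{D}^{\cE,\cE'}$.

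For the converse, suppose $\mathbf{d}\in\mathcal{D}^{\cE,\cE'}$. Since the multiplication of $\mathcal{H}(\cE)$ sends $[Z]\diamond[X]$ to a sum over the middle terms of conflations $X\rightarrowtail Y\twoheadrightarrow Z$ in $\cE$ with strictly positive structure constants $|\Ext^1_{\cE}(Z,X)_Y|/|\Hom(Z,X)|$, compatibility with the filtration forces $\varphi([X]-[Y]+[Z])\ge 0$ for every conflation of $\cE$. Comparing, for each pair $X,Z$, the degree-$\varphi([X])+\varphi([Z])$ component of $\mathrm{gr}([Z])\cdot\mathrm{gr}([X])$ in $\mathrm{gr}_{\mathbf{d}}\mathcal{H}(\cE)$ with the product $[Z]\diamond[X]$ in $\mathcal{H}(\cE')$ yields, for each $Y$, the equality $|\Ext^1_{\cE'}(Z,X)_Y|=|\Ext^1_{\cE}(Z,X)_Y|$ when $\varphi([X]-[Y]+[Z])=0$ and $|\Ext^1_{\cE'}(Z,X)_Y|=0$ when $\varphi([X]-[Y]+[Z])>0$. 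A conflation of $\cE'$ with ends $X,Z$ and middle $Y$ makes $|\Ext^1_{\cE'}(Z,X)_Y|$ nonzero, forcing $\varphi([X]-[Y]+[Z])=0$; a conflation of $\cE\setminus\cE'$ with the same data lies in $\Ext^1_{\cE}(Z,X)_Y\setminus\Ext^1_{\cE'}(Z,X)_Y$, so $|\Ext^1_{\cE'}(Z,X)_Y|<|\Ext^1_{\cE}(Z,X)_Y|$, which together with $\varphi([X]-[Y]+[Z])\ge 0$ forces $\varphi([X]-[Y]+[Z])>0$. Hence $\varphi$ is $\cE'$-characteristic, proving the claim.

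It remains to rewrite ``$\varphi$ is $\cE'$-characteristic in $\cE$'' as the asserted description of $\mathcal{D}^{\cE,\cE'}$. Since $\cC^{\cE'}=\cC^{\cE',\cE^{\mathrm{add}}}$ is, by definition, the cone generated by the conflation vectors $[X]-[Y]+[Z]$ of $\cE'$ (the split ones contributing $0$), the condition that $\varphi$ be $\cE'$-additive is literally $\varphi|_{\cC^{\cE'}}=0$. For the positivity condition I would argue as in the proof of Theorem \ref{degen:body}: because $\cA$ is additively finite, every exact structure on it is admissible and satisfies $\Ex_+=\AR_+$ (Theorem \ref{enomoto}), so any conflation vector of $\cE$ decomposes as a non-negative combination of $\cE$-AR-conflation vectors; splitting this combination according to whether the right-hand term of each AR-conflation is projective in $\cE'$ gives $[X]-[Y]+[Z]=c+c'$ with $c\in\cC^{\cE,\cE'}$ and $c'\in\cC^{\cE'}$, and moreover $c\ne 0$ exactly when $X\rightarrowtail Y\twoheadrightarrow Z$ lies in $\cE\setminus\cE'$ (if $c=0$ the conflation is built only from $\cE'$-AR-conflations, hence lies in $\cE'$ by Theorem \ref{Thm:Eno2} applied to $\cE'$). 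Thus, assuming $\varphi|_{\cC^{\cE'}}=0$, for a conflation of $\cE\setminus\cE'$ we get $\varphi([X]-[Y]+[Z])=\varphi(c)+\varphi(c')=\varphi(c)$, which is $>0$ for all such conflations if and only if $\varphi$ is strictly positive on every extremal ray of $\cC^{\cE,\cE'}$ (these are, by Proposition \ref{Simplicial}, precisely the AR-conflation vectors of $\cE\setminus\cE'$, and they are linearly independent), if and only if $\varphi(x)>0$ for all $x\in\cC^{\cE,\cE'}\setminus\{0\}$. Combining the two reformulations gives exactly the claimed equality of sets.

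I expect the main obstacle to be the converse half of the dictionary: extracting the exact equalities and strict inequalities for $\varphi$ on conflation vectors purely from the algebra isomorphism $\mathrm{gr}_{\mathbf{d}}\mathcal{H}(\cE)\cong\mathcal{H}(\cE')$. This requires keeping careful track of which structure constants of $\mathcal{H}(\cE)$ survive in each graded component of the associated graded algebra and matching them, coefficient by coefficient, against those of $\mathcal{H}(\cE')$, together with the elementary but essential observation that an $\cE$-extension with a prescribed middle term which fails to be an $\cE'$-extension strictly enlarges the set $\Ext^1_{\cE'}(Z,X)_Y\subseteq\Ext^1_{\cE}(Z,X)_Y$, and hence the relevant structure constant. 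Everything else is either a reduction to results already in the paper (Propositions \ref{filtrations_and_char}, \ref{Simplicial}, Theorems \ref{enomoto}, \ref{Thm:Eno2}) or elementary linear algebra of simplicial cones.
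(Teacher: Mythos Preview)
Your proof is correct and follows essentially the same approach as the paper: both use the AR-conflation decomposition $\Ex_+(\cE)=\AR_+(\cE)$ (Theorem~\ref{enomoto}) to pass between the cone conditions and the $\cE'$-characteristic property, and both identify $\mathbf{d}\in\mathcal{D}^{\cE,\cE'}$ with this property. The paper's proof is considerably terser---its inclusion $\mathcal{D}^{\cE,\cE'}\subseteq\text{RHS}$ is dispatched as ``directly translating the properties in the definition''---while you spell out the structure-constant comparison in $\mathrm{gr}_{\mathbf{d}}\mathcal{H}(\cE)$ and justify carefully why $c\neq 0$ forces the conflation out of $\cE'$ (exactly the argument from the proof of Theorem~\ref{degen:body}); so your version is a more detailed execution of the same strategy rather than a different one.
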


Since $\mathrm{ind}(\cA)$ is a $\mathbb{Z}$-basis of $K_0^{\mathrm{add}}(\cA)$, both sides of the above identity are in fact in the same space.

\begin{proof}
For a function $\varphi$ from the right hand side and a conflation $X \rightarrowtail Y \twoheadrightarrow Z\in\cE\setminus\cE'$, by Theorem \ref{enomoto} (v), we can write $[X]-[Y]+[Z]$ as a non-negative sum of AR-conflations in $\cE$. There exists an AR-conflation $A \rightarrowtail B \twoheadrightarrow C$ not in $\cE'$ having non-zero coefficient. This implies $\varphi(X)+\varphi(Z)>\varphi(Y)$. Moreover, if such a conflation $X \rightarrowtail Y \twoheadrightarrow Z\in\cE'$ then $\varphi(X)+\varphi(Z)=\varphi(Y)$. This implies that $\varphi$ induces an algebra filtration on $\mathcal{H}(\cE)$ whose associated graded algebra is $\mathcal{H}(\cE')$.

Directly translating the properties in the definition of $\mathcal{D}^{\cE,\cE'}$ into conditions on extensions shows the other inclusion.
\end{proof}

When projected to the corresponding subspace, the cone $\overline{\mathcal{D}^{\cE,\cE'}}$ can be naturally identified with the dual cone of $\mathcal{C}^{\cE,\cE'}$ in $(\Lambda_\mathbb{R}^{\cE,\cE'})^*$.

\section{Examples}\label{Examples}

Let $Q$ be a finite Dynkin quiver and $\mathcal{A}$ be the additive category of finite dimensional representations of $Q$ over $\mathbb{F}_q$. For an exact structure $\cE$ on $\cA$, we denote $\mathcal{H}(Q,\cE)$ the corresponding Hall algebra. 

\begin{example}
For $\ce=\ce^{\max}$ and $\ce'=\ce^{\mathrm{add}}$, we recover the result in \cite{FFR2}. Let $\underline{w}_0$ be a reduced decomposition of the longest element in the Weyl group of the Lie algebra $\mathfrak{g}(Q)$ associated to the underlying graph of $Q$. We assume furthermore that $\underline{w}_0$ is compatible with the orientation of $Q$.

\begin{proposition}[\cite{FFR2}]
The polyhedral cone $\overline{\mathcal{D}}$ coincides with the quantum degree cone associated to $\underline{w}_0$ in \cite{BFF}. Its points induce degenerations of the negative part of the quantum group $U_\nu^-(\mathfrak{g}(Q))$.
\end{proposition}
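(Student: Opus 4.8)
The plan is to obtain the assertion by specialising the cone duality of Theorem~\ref{Thm:cone} and feeding it into the classical dictionary between the Auslander--Reiten theory of $\cA$ and the PBW bases of $U_\nu^-(\mathfrak{g}(Q))$. Since $\cA$ is additively finite by Gabriel's theorem, the maximal exact structure $\cE^{\max}$ is the abelian one. Applying Theorem~\ref{Thm:cone} with $\cE=\cE^{\max}$ and $\cE'=\cE^{\mathrm{add}}$, and observing that the vanishing condition is imposed on $\cC^{\cE^{\mathrm{add}}}=\{0\}$ and hence is vacuous, identifies $\overline{\mathcal{D}}$ with the dual cone of $\cC=\cC^{\cE^{\max}}$ inside $(K_0^{\mathrm{add}}(\cA)\otimes_{\mathbb{Z}}\mathbb{R})^*$. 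By Proposition~\ref{Simplicial} this dual cone is simplicial with facets indexed by the AR conflations $\tau M\rightarrowtail E_M\twoheadrightarrow M$ of $\cA$, one for each non-projective indecomposable $M$; explicitly, $\overline{\mathcal{D}}$ is the set of functionals $\varphi$ with $\varphi([\tau M])+\varphi([M])\ge\varphi([E_M])$ for all such $M$.

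Next I would match this polyhedron with the quantum degree cone $\mathcal{D}_{\underline{w}_0}$ of \cite{BFF}. Fixing a reduced word $\underline{w}_0$ adapted to the orientation of $Q$, the classical theory of PBW bases (Lusztig \cite{Lusztig90,Lusztig91}, Ringel \cite{R1}; see also \cite{FFR1,FFR2}) identifies the indecomposables $M_1,\dots,M_N$ of $\cA$ with the positive roots $\beta_1,\dots,\beta_N$ listed in the convex order determined by $\underline{w}_0$, in such a way that Ringel's isomorphism $U_\nu^-(\mathfrak{g}(Q))\overset{\sim}{\to}\mathcal{H}_{tw}(\cA)$ carries the PBW root vectors to the classes $[M_i]$ and turns the Levendorskii--Soibelman straightening relations into the non-split conflations of $\cA$. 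By construction $\mathcal{D}_{\underline{w}_0}$ is cut out by the inequalities forcing each non-trivial straightening to strictly decrease the degree, that is by $\mathbf{d}([X])+\mathbf{d}([Z])>\mathbf{d}([Y])$ for all non-split conflations $X\rightarrowtail Y\twoheadrightarrow Z$. By Theorem~\ref{Thm:Eno2} (equivalently Theorem~\ref{enomoto}(vi)), every class $[X]-[Y]+[Z]$ is a non-negative integer combination of the classes $[\tau M]-[E_M]+[M]$ with at least one positive coefficient, so all of these inequalities are already implied by the AR ones, exactly as in the proof of Theorem~\ref{Thm:cone}. Comparing with the first paragraph yields $\overline{\mathcal{D}}=\mathcal{D}_{\underline{w}_0}$; the detailed combinatorial translation into the language of quiver representations is the content of \cite{FFR2}.

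For the second assertion I would combine this identification with Ringel's isomorphism and the twisted degeneration statements of Section~\ref{Sec:4}. By the twisted version of Theorem~\ref{each_degen_is_HA:body} (cf.\ Corollary~\ref{Cor:DegenHall:locfin:twist}), each $\mathbf{d}\in\overline{\mathcal{D}}$ induces an algebra filtration on $\mathcal{H}_{tw}(\cE^{\max})\cong U_\nu^-(\mathfrak{g}(Q))$ whose associated graded is the twisted Hall algebra of some $\cE'\in[\cE^{\mathrm{add}},\cE^{\max}]$. Transporting along the isomorphism, every point of $\overline{\mathcal{D}}$ exhibits a degeneration of $U_\nu^-(\mathfrak{g}(Q))$; for $\mathbf{d}$ in the interior one obtains $\cE'=\cE^{\mathrm{add}}$ and recovers the $q$-commutative polynomial algebra of Corollary~\ref{Cor:q-symm}, i.e.\ the quantized PBW theorem.

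The main obstacle is the combinatorial bookkeeping in the second paragraph: one must set up the dictionary between the AR sequences of $\cA$ and the Levendorskii--Soibelman relations with the correct bounds on the indices, and check that compatibility of $\underline{w}_0$ with the orientation of $Q$ is precisely what makes the two enumerations of positive roots coincide. Once this is arranged, the equality of the two cones is forced by Theorem~\ref{Thm:cone} together with Proposition~\ref{Simplicial}, and what remains are the verifications already carried out in \cite{FFR2}.
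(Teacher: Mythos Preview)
The paper does not supply its own proof of this proposition: it is quoted from \cite{FFR2} and placed in Section~\ref{Examples} precisely to illustrate that the special case $\cE=\cE^{\max}$, $\cE'=\cE^{\mathrm{add}}$ of Theorem~\ref{Thm:cone} recovers that earlier result. Your proposal carries out exactly this intended recovery---specialising Theorem~\ref{Thm:cone} and Proposition~\ref{Simplicial}, then invoking the Ringel--Lusztig dictionary and the Levendorskii--Soibelman relations to match the resulting facet description with the quantum degree cone of \cite{BFF}---so there is nothing to compare against and your outline is the right one.

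One small inconsistency to clean up: in your first paragraph you (correctly) describe $\overline{\mathcal{D}}$ by the weak inequalities $\varphi([\tau M])+\varphi([M])\geq\varphi([E_M])$, while in the second you describe $\mathcal{D}_{\underline{w}_0}$ by the strict inequalities coming from non-trivial straightenings, and then conclude $\overline{\mathcal{D}}=\mathcal{D}_{\underline{w}_0}$. As stated this equates a closed cone with an open one; what you actually show is $\mathcal{D}=\mathcal{D}_{\underline{w}_0}$ (both open) and hence equality of closures. The ``combinatorial bookkeeping'' you flag at the end---that adaptedness of $\underline{w}_0$ to $Q$ makes the Levendorskii--Soibelman terms line up with the non-split conflations under Ringel's isomorphism---is indeed the substantive content borrowed from \cite{FFR1,FFR2}, and you are right to defer it there.
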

\end{example}

\begin{example}\label{Ex:An}
We consider the quiver
$$Q=1\longrightarrow 2\longrightarrow \cdots\longrightarrow n.$$ 
For $1\leq i\leq j\leq n$, let $I_{i,j}$ denote the indecomposable representation of $Q$ supported at nodes $i,i+1,\cdots,j$ and $S_i:=I_{i,i}$. The AR-conflations in $\cE^{\max}$ are
$$(A).\ \ I_{i,j}\to I_{i-1,j}\oplus I_{i,j-1}\to I_{i-1,j-1},\  \text{ for }2\leq i<j\leq n;$$
$$(B).\ \ S_{i+1}\to I_{i,i+1}\to S_i,\  \text{ for }1\leq i\leq n-1.$$
This example is studied in \cite{FFR1,FFR2,FFFM} under the framework of PBW filtration and the maximal cone in the tropical flag variety corresponding to the Feigin-Fourier-Littelmann-Vinberg (FFLV) toric variety.

We briefly discuss this connection in the above framework, complete proofs can be found in \cite{CFFFR,FFR1,FFR2,FFFM}.

Let $\mathfrak{g}=\mathfrak{sl}_{n+1}$ be the type $\tt A_n$ Lie algebra and $\mathfrak{g}=\mathfrak{n}^+\oplus\mathfrak{h}\oplus\mathfrak{n}^-$ be a triangular decomposition of $\mathfrak{g}$. The positive roots in $\mathfrak{g}$ will be denoted by $\Delta_+:=\{\alpha_{i,j}\mid 1\leq i\leq j\leq n\}$. For a positive root $\alpha_{i,j}$ in $\mathfrak{g}$, we fix a generator $f_{\alpha_{i,j}}$ in the weight space of $\mathfrak{n}^-$ of weight $-\alpha_{i,j}$.

Let $\cE_{\mathrm{PBW}}$ be the smallest exact structure on $\cA$ containing the AR-conflations $S_{i+1}\to I_{i,i+1}\to S_i$ for $1\leq i\leq n-1$. Let $\mathcal{D}_{\mathrm{PBW}}$ be the intersection of the degree cone $\overline{\mathcal{D}^{\cE_{\mathrm{PBW}}}}$ (recall that $\mathcal{D}^\cE:=\mathcal{D}^{\cE,\cE^{\mathrm{add}}}$) with the face of $\overline{\mathcal{D}}$ defined by those $\varphi$ taking zero value on the AR-conflations above of type (A). The polyhedral cone $\mathcal{D}_{\mathrm{PBW}}$ is simplicial of dimension $n-1$ having $2^{n-1}$ faces. 

By Corollary \ref{converse_statement1}, the Hall algebra $\mathcal{H}(\cE_{\mathrm{PBW}})$, up to a twist, is a degeneration of the quantum group $U_\nu^-(\mathfrak{g})$. Any point $\mathbf{d}$ in the cone $\mathcal{D}_{\mathrm{PBW}}$ induces a filtration on $\mathcal{H}(\cE_{\mathrm{PBW}})$. According to the PBW-type theorem for quantum groups in Lusztig \cite{Lusztig} and Ringel \cite{R1} (see also the arguments in \cite[Section 2.2 and 2.3]{FFR1}), by taking the $\nu\mapsto 1$ limit, the Hall algebra $\mathcal{H}(\cE_{\mathrm{PBW}})$ can be specialized to the universal enveloping algebra $U(\mathfrak{n}^-_\mathbf{d})$ of some Lie algebra $\mathfrak{n}^-_\mathbf{d}$, which is isomorphic to the associated graded algebra of the universal enveloping algebra $U(\mathfrak{n}^-)$ with respect to the filtration induced by giving degree degree $\mathbf{d}(I_{i,j})$ to the PBW root vector $f_{\alpha_{i,j}}\in \mathfrak{n}^-$.

Let $V_\lambda$ be the finite dimensional irreducible representation of $\mathfrak{g}$ of highest weight $\lambda$ and $v_\lambda\in V_\lambda$ be a highest weight vector. For $k\geq 0$, let $\mathcal{F}_k U(\mathfrak{n}^-)$ denote the linear span of elements in $U(\mathfrak{n}^-)$ having degrees no more than $k$. It induces a filtration on $V_\lambda$ by setting $\mathcal{F}_k V_\lambda:=\mathcal{F}_k U(\mathfrak{n}^-)\cdot v_\lambda$. We denote $V_\lambda^\mathbf{d}$ the associated graded vector space: it admits a cyclic $U(\mathfrak{n}^-_\mathbf{d})$-module structure. Let $v_\lambda^\mathbf{d}$ be the class of $v_\lambda$ in $V_\lambda^\mathbf{d}$.

For example, we take $\mathbf{p}\in\overline{\mathcal{D}}$ such that for any $1\leq i\leq j\leq n$, $\mathbf{p}(I_{i,j})=1$: it is in $\mathcal{D}_{\mathrm{PBW}}$. The filtration on $U(\mathfrak{n}^-)$ induced by $\mathbf{p}$ is the usual PBW filtration used in the PBW theorem. Although the Hall algebra $\mathcal{H}(\cE_{\mathrm{PBW}})$ is non-commutative, its $\nu\mapsto 1$ specialization $U(\mathfrak{n}_\mathbf{p}^-)$ is isomorphic to the symmetric algebra $S(\mathfrak{n}^-)$. Studying a monomial basis of $V_\lambda^\mathbf{p}$ and its polyhedral parametrization is the main topic in \cite{FFL}.

For any $\mathbf{d}\in \mathcal{D}_{\mathrm{PBW}}$, the Lie algebra $\mathfrak{n}_\mathbf{d}^-$ appeared in \cite{CFFFR}, via the exponential map, as the group of symmetries of some linear degenerate flag varieties. We make its connection to the face lattice of $\mathcal{D}_{\mathrm{PBW}}$ explicit.

According to Proposition \ref{Simplicial} and Theorem \ref{Thm:cone}, the face lattice of $\mathcal{D}_{\mathrm{PBW}}$ is isomorphic to the canonical Boolean lattice of the subsets of $[n-1]=\{1,2,\cdots,n-1\}$. Precisely, for any subset $I:=\{i_1,\cdots,i_k\}$ of $[n-1]$, there exists a unique face $F_I$ of $\mathcal{D}_{\mathrm{PBW}}$ consisting of those $\phi$ satisfying 
\begin{enumerate}
    \item for $i\in I$, $\phi([S_i])+\phi([S_{i+1}])>\phi([I_{i,i+1}])$;
    \item for $j\in [n-1]\setminus I$,
    $\phi([S_j])+\phi([S_{j+1}])=\phi([I_{j,j+1}])$.
\end{enumerate}
All faces of $\mathcal{D}_{\mathrm{PBW}}$ arise in this way.

Let $\rho$ be the half sum of all positive roots. We associate to $\mathbf{d}\in \mathcal{D}_{\mathrm{PBW}}$ a geometric object 
$$\mathcal{F}l_{n+1}^\mathbf{d}:=\overline{\exp(\mathfrak{n}_\mathbf{d}^-)\cdot [v_\rho^\mathbf{d}]}\subseteq \mathbb{P}(V_\rho^{\mathbf{d}})$$
as the highest weight orbit in the representation $V_\rho^\mathbf{d}$ through the cyclic vector $v_\rho^\mathbf{d}$. This is a flat degeneration of the usual complete flag variety. It is easy to see that if $\mathbf{d}$ and $\mathbf{d}'$ lie in the relative interior of the same cone in $\mathcal{D}_{\mathrm{PBW}}$, $\mathcal{F}l_{n+1}^{\mathbf{d}}$ is isomorphic to $\mathcal{F}l_{n+1}^{\mathbf{d}'}$ as projective varieties. This procedure associate a family of isomorphic varieties to every element in the face lattice of $\mathcal{D}_{\mathrm{PBW}}$. 

These varieties appear naturally in the framework of linear degenerate flag varieties, where the poset structure on the face lattice is interpreted as the orbit closure structure of degeneration of representations. Let $\mathbf{e}:=(1,2,\cdots,n)$ and $M$ be a representation of the quiver $Q$ of dimension vector $(n+1,\cdots,n+1)$. The quiver Grassmannian $\mathrm{Gr}_\mathbf{e}(M)$ consisting of subrepresentations of $M$ having dimension vector $\mathbf{e}$ is called a linear degenerate flag variety in \cite{CFFFR} since when $M_0=I_{1,n}^{\oplus n+1}$, $\mathrm{Gr}_\mathbf{e}(M_0)$ is isomorphic to the complete flag variety. 

For any $\mathbf{d}\in \mathcal{D}_{\mathrm{PBW}}$, we let $I_\mathbf{d}=\{i_1,\cdots,i_k\}\subseteq [n-1]$ be such that $\mathbf{d}\in F_{I_\mathbf{d}}$. We fix the ordering $i_1<\cdots<i_k$ and consider the following representation of $Q$:
$$M_\mathbf{d}:=I_{1,n}^{\oplus n+1-k}\oplus\bigoplus_{\ell=1}^k (I_{1,i_\ell}\oplus I_{i_\ell+1,n}).$$
It is proved in \cite{CFFFR} that as projective varieties, $\mathcal{F}l_{n+1}^\mathbf{d}\cong\mathrm{Gr}_\mathbf{e}(M_\mathbf{d})$. To compare the poset structures, notice that for two points $\mathbf{d},\mathbf{d}'\in\mathcal{D}_{\mathrm{PBW}}$, when $I_\mathbf{d}\subseteq I_{\mathbf{d}'}$, the representation $M_{\mathbf{d}'}$ is a degeneration of $M_\mathbf{d}$.

For example, the point $\mathbf{p}$ lies in the interior of $\mathcal{D}_{\mathrm{PBW}}$, and the above isomorphism is proved by Cerulli Irelli, Feigin and Reineke in \cite{CFR}.

For $\mathbf{d}\in\mathcal{D}_{\mathrm{PBW}}$, the defining ideal of the projective variety $\mathcal{F}l_{n+1}^\mathbf{d}$ is described in \cite{CFFFR2}. Such an ideal is in fact the initial ideal of the Pl\"ucker ideal describing the complete flag variety with respect to some degree function. The linear map $\varphi$ in \cite[Section 5.3]{FFR2} (see also \cite[Section 7]{FFFM}) maps $\mathcal{D}_{\mathrm{PBW}}$ affinely to a face $\mathcal{F}$ of the tropical flag variety $\mathrm{trop}(\mathcal{F}l_{n+1})$ in the Pl\"ucker embedding (see \cite{FFR2} for details).

More geometric and representation-theoretic properties of the varieties $\mathcal{F}l_{n+1}^\mathbf{d}$ can be found in \cite[Section 5]{CFFFR}.
\end{example}

\begin{remark}
A point in the cone $\overline{\mathcal{D}}$ gives a filtration on any finite dimensional irreducible representation of the quantum group $U_\nu(\mathfrak{g}(Q))$. It is natural to ask for a basis of these representations, together with a nice polyhedral parametrization, which is compatible with such a filtration.
\end{remark}

\begin{example} \label{example:A3}
We consider $Q=1\longrightarrow 2\longleftarrow 3$. 
The AR-quiver has the form
\centerline{
\xymatrix{
& P_1 \ar[dr] & & S_3 \ar@{.>}[ll] \\
S_2 \ar[ur] \ar[dr]& & I_2 \ar[ur] \ar[dr] \ar@{.>}[ll]&\\
& P_3 \ar[ur] & & S_1. \ar@{.>}[ll]
}
}

The three AR-conflations are 
$$(1).\ S_2 \rightarrowtail P_1\oplus P_3 \twoheadrightarrow I_2,\ \ (2).\  P_1 \rightarrowtail I_2\twoheadrightarrow S_3,\ \ (3).\  P_3 \rightarrowtail I_2\twoheadrightarrow S_1.$$
For a proper subset $I\subseteq \{1,2,3\}$ we denote $\cE_I$ the smallest exact structure on $\cA$ containing the AR-conflations indexed by $I$. We have $2^3 = 8$ different exact structures, cf. \cite[Section 4.2]{BHLR}.

By fixing a total ordering on $\ind{\cA}$, say 
$$M_1:=S_2,\ M_2:=P_1,\ M_3:=P_3,\ M_4:=I_2,\ M_5:=S_3,\ M_6:=S_1,$$
we identify $K_0^{\mathrm{add}}(\cA)$ with $\mathbb{Z}^6$. Let $x_1,\cdots,x_6$ be coordinates of $\mathbb{Z}^6$. Then $$K_0(\cE_1)=\mathbb{Z}^6/(x_1-x_2-x_3+x_4)$$ 
and 
$$K_0(\cE_{12})=\mathbb{Z}^6/(x_1-x_2-x_3+x_4,x_2-x_4+x_5).$$ 

The kernel of the canonical $\mathbb{Z}$-module map $K_0(\cE_1) \twoheadrightarrow K_0(\cE_{12})$ is given by $\mathbb{Z}(x_2-x_4+x_5)$, and $\cC^{\cE_{12},\cE_{1}}=\mathbb{R}_{\geq 0}(x_2-x_4+x_5)$. The dual cone 
$\mathcal{D}^{\cE_{12},\cE_{1}}$ consists of points $\mathbf{d}=(d_1,\cdots,d_6)\in\mathbb{R}^6$ such that $d_2+d_5>d_4$ and $d_1+d_4=d_2+d_3$. Its closure is a 5-dimensional cone in $\mathbb{R}^6$ having a 4-dimensional linearity space. 

We denote $\diamond_{12}$ and $\diamond_1$ the multiplications in the Hall algebra $\mathcal{H}(Q,\cE_{12})$ and $\mathcal{H}(Q,\cE_{1})$ respectively.

The Hall algebra $\mathcal{H}(Q,\ce_{12})$ is generated by $[M_i]$ for $i=1,\cdots,6$. For most of pairs $i, j$, $\Ext^1_{\ce_{12}}(M_i, M_j) = \Ext^1_{\ce_{12}}(M_j, M_i) = 0,$ so for such pair $i,j$, we have the following relations in $\mathcal{H}(Q,\ce_{12})$:

$$[M_i]  \diamond_{12} [M_j] = \frac{1}{|\Hom(M_i, M_j)|} [M_i \oplus M_j] = \frac{|\Hom(M_j, M_i)|}{|\Hom(M_i, M_j)|} [M_j]  \diamond_{12} [M_i].$$

Note that since $\ce_{12}$ is hereditary, we also have $\frac{1}{|\Hom(M_i, M_j)|} = \frac{1}{\left\langle M_i, M_j \right\rangle_{\ce_{12}}}$, whenever $\Ext^1_{\ce_{12}}(M_i, M_j) = 0.$

The only products of generators that are not given simply by the multiples of classes of their direct sums are the following:

$$[M_4] \diamond_{12} [M_1] = [M_1 \oplus M_4] + (q-1) [M_2 \oplus M_3],$$
$$[M_5] \diamond_{12} [M_2] = [M_2 \oplus M_5] + (q-1) [M_4].$$

Then all these identities, except for the last one, still hold in $\mathcal{H}(Q,\ce_1)$, and the last one is replaced by
$$[M_5] \diamond_1 [M_2] = [M_2 \oplus M_5].$$

Naturally, the commutation relations between the generators change accordingly. In $\mathcal{H}(Q,\ce_{12}),$ we have two non $q-$commuting pairs of generators $[M_i]$:
\begin{align} \label{M1:M4}
[M_4] \diamond_{12} [M_1]-q[M_1] \diamond_{12} [M_4] = (q-1) [M_2] \diamond_{12} [M_3],
\end{align}
\begin{align} \label{M2:M5}
[M_5] \diamond_{12} [M_2]-[M_2] \diamond_{12} [M_5]= (q-1) [M_4].
\end{align}

The coefficient $q$ ahead of $[M_1] \diamond_{12} [M_4]$ appeared because $|\Hom(M_1, M_4)| = q$, hence
$$[M_1] \diamond_{12} [M_4] = q^{-1} [M_1 \oplus M_4].$$

In $\mathcal{H}(Q,\ce_1),$ we still have 
$$[M_4] \diamond_1 [M_1]-q[M_1] \diamond_1 [M_4]= (q-1) [M_2] \diamond_1 [M_3],
$$
but the relation (\ref{M2:M5}) transforms to
$$[M_5] \diamond_1 [M_2]-[M_2] \diamond_1 [M_5]= 0.$$

Each point $\mathbf{d}\in\mathcal{D}^{\cE_{12},\cE_{1}}$ gives rise to a filtration on $\mathcal{H}(Q,\cE_{12})$ whose associated graded algebra is exactly $\mathcal{H}(Q,\cE_1)$.
\end{example}

\begin{example} \label{disjoint_A2}
Consider the disjoint union of two quivers of type ${\tt A_2}: \quad Q = 1 \longrightarrow 2 \quad 3 \longrightarrow 4.$

The AR-quiver has the form

\centerline{
\xymatrix{
& P_1 \ar[dr] & & & P_3 \ar[dr] \\
S_2 \ar[ur] & & S_1 \ar@{.>}[ll]& S_4 \ar[ur] & & S_3 \ar@{.>}[ll]
}
}

There are two $\AR-$conflations:
$$S_2 \rightarrowtail P_1 \twoheadrightarrow S_1$$
and
$$S_4 \rightarrowtail P_3 \twoheadrightarrow S_3.$$

We have $2^2 = 4$ different exact structures.
In particular, there are $2$ different exact structures $\ce^1, \ce^2$ strictly between $\ce^{\add}$ and $\ce^{\max}$:
$\ce^1$ contains the first $\AR-$conflation but does not contain the second $\AR-$conflation;
$\ce^2$ contains the second $\AR-$conflation, but does not contain the first one.

Their Hall algebras are naturally isomorphic. The isomorphism is induced by the equivalence of exact categories
$$F: (\ca, \ce^1) \overset\sim\to (\ca, \ce^2),$$
$$S_1 \mapsto S_3, \qquad S_2 \mapsto S_4, \qquad P_1 \mapsto P_3,$$
$$S_3 \mapsto S_1, \qquad S_4 \mapsto S_2, \qquad P_3 \mapsto P_1.$$

Note that the isomorphism is not given by the identity on objects, and these $2$ exact structures correspond to different faces of the cone $\cC.$
\end{example}

\begin{example} \label{Geigle2}
Let $\Lambda$ be a hereditary Artin algebra of infinite representation type and $\mathscr{P}$ its full subcategory defined by the preprojective component, as in Example \ref{Geigle}.
Since $\mod \Lambda$ is $\Hom-$ and $\Ext^1-$finite, so is $\mathscr{P}$ endowed with an arbitrary exact structure $\cE.$ Our results show the existence of functions on $\Iso(\mathscr{P})$ that induce degenerations from $\cH(\cE^{\max})$ to $\cH(\cE)$ and from $\cH(\cE)$ to $\cH(\mathscr{P}, \cE^{\add}).$ Moreover, for any additive function on the the set $\Iso(\mathscr{P}),$ the associated graded to the induced filtration on $\cH(\cE^{\max})$ is the Hall algebra of a certain exact structure of $\mathscr{P}.$

Since the category has infinitely many indecomposables, one should consider the cones $\cC(\mathscr{P})$ and $\cd(\mathscr{P})$ in an infinite-dimensional vector space. For a pair of exact structures $\cE' < \cE$, the corresponding cones live in (mutually dual) finite-dimensional vector spaces whenever the set $\ind(\cP(\cE'))\backslash \ind(\cP(\cE))$ is finite.
\end{example}

\section{Remarks on extriangulated structures on an additive category} \label{extriangulated}

In this section, we will briefly discuss one direction in which we can generalize our results. This will be the subject of the upcoming sequel, where all the details will be given.

Nakaoka and Palu recently introduced a notion of \emph{extriangulated categories} \cite{NakaokaPalu1}. This is a unification of exact and triangulated categories. One can also consider it as an axiomatisation of full extension-closed subcategories of triangulated categories, although it is not known yet whether every extriangulated category admits an embedding (as a full extension-closed subcategory) into a triangulated category. The datum defining an extriangulated category consists of an additive category $\cA$, a bifunctor $\mathbb{E}: \cA^{\op} \times \cA \to \Ab$ and a so-called \emph{realization} correspondence $\mathfrak{s}$ that essentially defines the class of conflations. This datum must then satisfy certain axioms that simultaneously generalize axioms of exact and of triangulated categories. 

As we mentioned in Introduction, the associativity of Hall algebras of exact categories can be explained in terms of higher categories. Namely, to each exact category one can associate an $\infty-$category of special kind: an exact $\infty-$category. To each exact $\infty-$category, Dyckerhoff and Kapranov \cite{DK} associated a simplicial space, called the \emph{Waldhausen $S_{\bullet}-$space}. They proved that it satisfies a certain property that they called being \emph{ 2-Segal}. If we one starts from a $\Hom-$ and $\Ext^1-$finite exact category, the associativity of its Hall algebra can be seen as a shadow of this property of the corresponding $S_{\bullet}-$space. Stable $\infty-$categories are exact, and the associativity of derived Hall algebras of their homotopy categories is again just a shadow of their $S_{\bullet}-$spaces being 2-Segal. Very recently, Nakaoka and Palu \cite{NakaokaPalu2} proved that for every additive exact $\infty-$category (or, equivalently, for every exact $\infty-$category in terminology of Barwick \cite{Barwick}), its homotopy category admits a natural extriangulated structure. This suggests that one should be able to define associative Hall algebras of $\Hom-$ and $\Ext^1-$finite extriangulated categories, generalizing those of exact and of triangulated categories. In the upcoming sequel, we will give such a definition for all extriangulated categories satisfying certain natural finiteness conditions. Note that To\"en \cite{Toen} defined Hall algebras only for algebraic triangulated categories, but Xiao and Xu \cite{XX1} proved that the same definition applied for an arbitrary triangulated category (satisfying certain finiteness conditions) produces an associative algebra without any assumption on the existence of an enhancement. The same is true for extriangulated categories: their Hall algebras depend only on 1-categories, although in the presence of $\infty-$categorical enhancement, Hall algebras themselves admit, in some sense, higher categorical enhancements in the form of Waldhausen $S_{\bullet}-$spaces.

An additive category $\cA$ can be endowed with many extriangulated structures $(\mathbb{E}, \mathfrak{s})$. Unlike exact structures, extriangulated structures on an additive category do not form a lattice, and the reason is twofold. First, the definition of extriangulated structures suggests that they should probably form a cofiltered category, rather than just a poset. The precise structure has not been studied yet. Second, when an additive category admits a triangulated structure, the latter provides an extriangulated structure that has no non-zero projectives or injectives. Thus, whatever variation of the containment order we might consider, this structure would be maximal. But it is well-known that any category that admits a triangulated structure, admits at least two of them that differ from each other by the choice of a sign. More generally, there exist additive categories with several triangulations \cite{Balmer}. 
 
 From the point of view of Hall algebras, the second observation seems to be inconsequential. Indeed, in all the known situations all triangulated structures on a given additive category are related by global automorphisms of the category (that do not change the shift functor), see \cite{Balmer} for details. From the form of the structure constants, it follows that in this situation (derived) Hall algebras of different triangulated structures, whenever defined, are all isomorphic to each other. 
 
 It is not immediately clear whether the first observation has any relevance to Hall algebras either. Let $\cA$ be an additive category. Assume that each extriangulated structure on $\cA$ has a well-defined Hall algebra and that extriangulated structures on $\cA$ form a set. Consider the quotient of this set by the following equivalence relation: 
 $(\mathbb{E}, \mathfrak{s}) \sim (\mathbb{E'}, \mathfrak{s'})$
 if the identity map on objects in $\cA$ induces an isomorphism of the Hall algebras 
$\ch(\mathbb{E}, \mathfrak{s}) \overset\sim\to \ch(\mathbb{E'}, \mathfrak{s'}).$
We can endow this quotient with the following partial order: we say that 
$(\mathbb{E'}, \mathfrak{s'}) \leq (\mathbb{E}, \mathfrak{s})$
if $\ch(\mathbb{E'}, \mathfrak{s'})$ is a degeneration of $\ch(\mathbb{E}, \mathfrak{s})$.
It seems probable that this endows our quotient with a structure of a meet semilattice. Indeed, every exact structure on $\cA$ can be seen as an extriangulated category in a unique way. In particular, the split exact structure $(\cA, \cE^{\add})$ defines the extriangulated structure that we denote by $(\cA, \mathbb{E}^{add}, \oplus).$ Its Hall algebra as of an extriangulated category is the same as $\cH(\cE^{\add})$, i.e. a skew polynomial algebra. This is then the minimal element in our poset, the argument is similar to arguments in Section 4. 
The previous paragraph suggests that our quotient set of extriangulated structures might be a bounded complete lattice.

Naturally, we would like to define this poset without referring to Hall algebras. Under certain conditions, for an extriangulated structure $(\mathbb{E}, \mathfrak{s})$ on $\cA$, we can define and study intervals $[(\mathbb{E}^{add}, \oplus), (\mathbb{E}, \mathfrak{s})]$. Hu, Zhang and Zhou \cite{HZZ} recently generalized the notion of {\it proper classes} (originally due to Beligiannis \cite{Beligiannis}) from triangulated categories to extriangulated categories. Zhu and Zhuang \cite{ZZ} investigated the condition $\Ex = \AR$ and proved their theorems in a larger generality that we considered in the present paper - namely, in the setting of extriangulated categories. Ogawa \cite{Ogawa} considered defects and effaceable functors for extriangulated structures on additive categories $\cA$ with weak kernels. He proved that in this setting, these two notions again coincide and form a Serre subcategory of the category of finitely presented modules over $\cA.$ The converse statement is not yet known, i.e. it is not verified whether for an arbitrary $\ca$ with weak kernels there exists a Serre subcategory of its category of finitely presented modules such that each its Serre subcategory is the category of contravariant defects of conflations in an extriangulated structure on $\ca$. However, the notion of AR conflations is defined for extriangulated categories \cite{INP}, and their defects are always simple objects in the category of finitely presented modules.
Using all this, we can generalize our Theorem \ref{classif_general} as follows.

Let $\cA$ be a $\Hom-$finite, idempotent complete, $k-$linear locally finite small additive category endowed with an $\mathbb{E}-$finite extriangulated structure $(\mathbb{E}, \mathfrak{s})$. Then there are lattice isomorphisms between the lattices \footnote{The partial order in each of them is given by containment.} of 

\begin{itemize}
    \item Proper classes of extriangles in $(\mathbb{E}, \mathfrak{s})$;
    \item Relative extriangulated structures $\mathbb{E}_{\cd}$ with respect to full subcategories $\cd$ of $\cA$;
    \item  Relative extriangulated structures $\mathbb{E}^{\cd}$ with respect to full subcategories $\cd$ of $\cA$;
    \item Extriangulated structures $(\mathbb{E'}, \mathfrak{s'})$ such that $\Imm(\mathfrak{s}') \subset \Imm(\mathfrak{s})$ and $\mathbb{E'}$ is given by the restriction of $\mathbb{E}$ on the preimage of $\Imm(\mathfrak{s}')$;
    \item Subsets of the set of AR sequences in $(\mathbb{E}, \mathfrak{s})$;
    \item Subsets of the set of all the indecomposables in $\cA$ that are not projective in $(\mathbb{E}, \mathfrak{s})$;
    \item Subsets of the set of all the indecomposables in $\cA$ that are not injective in $(\mathbb{E}, \mathfrak{s})$.
\end{itemize}

In particular, each of these lattices is Boolean. If there are finitely many indecomposables in $\cA$ that are not projective in $(\mathbb{E}, \mathfrak{s})$, this lattice is a face lattice of a simplicial cone whose extremal rays are given by the generators of $AR(\mathbb{E}, \mathfrak{s})$.

Since we did not give any required definitions, we leave the proof to the upcoming sequel.


\begin{thebibliography}{10}

\bibitem{Aus1} M. Auslander, \emph{Coherent functors}, 1966 Proc. Conf. Categorical Algebra (La Jolla, Calif., 1965)
189--231 Springer, New York.

\bibitem{Aus2} M. Auslander, \emph{Representation theory of Artin algebras II}, Comm. Algebra 1 (1974), 269--310.

\bibitem{Aus3} M. Auslander, \emph{Functors and morphisms determined by objects}, in Representation theory of algebras (Proc. Conf., Temple Univ., Philadelphia, Pa., 1976), 1--244. Lecture Notes in Pure Appl. Math., 37, Dekker, New York, 1978.

\bibitem{Aus4} M. Auslander, \emph{Relations for Grothendieck groups of Artin algebras}, Proc. Amer. Math. Soc. 91 (1984), no. 3, 336--340.

\bibitem{ARIII} M. Auslander, I. Reiten, \emph{Representation theory of Artin algebras. III. Almost split sequences}, Comm.
Algebra 3 (1975), 239--294.

\bibitem{ARS} M. Auslander, I. Reiten, S. O. Smalø, \emph{Representation Theory of Artin Algebras}. Cambridge Studies
in Advanced Mathematics 36, Cambridge University Press, Cambridge, 1997.

\bibitem{AS} M. Auslander, Ø. Solberg, \emph{Relative homology and representation theory. I. Relative homology and homologically finite subcategories}, Comm. Algebra 21 (1993),
no. 9, 2995--3031.

\bibitem{Balmer} P. Balmer, \emph{Triangulated categories with several triangulations}, preprint, 2002.

\bibitem {BFF}  T. Backhaus,  X. Fang, G. Fourier, \emph{Degree cones and monomial bases of Lie algebras and quantum groups}, Glasgow Math. Journal,  vol. 59:3 (2017), 595--621.

\bibitem{Barwick} C. Barwick, \emph{On exact $\infty-$categories and the theorem of the heart}, Compos. Math. 151 (2015), no. 11, 2160--2186.

\bibitem{Beligiannis} A. Beligiannis, \emph{Relative homological algebra and purity in triangulated categories}, J. Algebra 227(1) (2000), 268--361.

\bibitem{BG} A. Berenstein, J. Greenstein, \emph{Primitively generated Hall algebras}, Pacific J. Math. 281 (2016), no. 2, 287--331. 

\bibitem{Br} T.~Bridgeland. \emph{Quantum groups via Hall algebras of complexes}, Annals of Mathematics 177 (2013), no. 2, 739--759.

\bibitem{BrHallDT} T.~Bridgeland. \emph{Hall algebras and Donaldson-Thomas invariants}, in Proceedings of Symposia in Pure Mathematics, Volume 97.1 (2018), 75--100.

\bibitem{BrScat} T. Bridgeland, \emph{Scattering diagrams, Hall algebras and stability conditions}. Algebraic Geometry 4,
5 (2017), 523--561.

\bibitem{BHLR}  T. Br\"ustle, S. Hassoun, D. Langford, S. Roy, \emph{Reduction of exact structures}, Journal of Pure and Applied Algebra, Volume 224, Issue 4, April 2020, 106212.

\bibitem{Buan}  A. B. Buan, \emph{Closed subbifunctors of the extension functor}, J. Algebra 244 (2001), no. 2,
407--428.

\bibitem{Buh} T.~B\"{u}hler, \emph{Exact categories}, Expo. Math. 28 (2010), no. 1, 1--69.

\bibitem{Butler} M. C. R. Butler, \emph{Grothendieck groups and almost split sequences}, Lecture Notes in Math., 882, Springer,
Berlin-New York, 1981.

\bibitem{ButlerHorrocks} M. C. R. Butler, G. Horrocks, \emph{Classes of extensions and resolutions}, Philos. Trans. Roy.
Soc. London Ser. A 254 1961/1962, 155--222.

\bibitem{CFFFR} G. Cerulli Irelli, X. Fang, E. Feigin, G. Fourier, M. Reineke, \emph{Linear degenerations of flag varieties},
Math. Z. 287 (2017), no. 1-2, 615--654.

\bibitem{CFFFR2} G. Cerulli Irelli, X. Fang, E. Feigin, G. Fourier, M. Reineke, \emph{Linear degenerations of flag varieties: partial flags, defining equations, and group actions}, Math. Z., DOI:10.1007/s00209-019-02451-1.

\bibitem{CFR}  G. Cerulli Irelli, E. Feigin, M. Reineke, \emph{Quiver Grassmannians and degenerate flag varieties}, Algebra Number Theory 6 (2012), 165--194.

\bibitem{Crivei12}  S.~Crivei, \emph{Stable short exact sequences define an exact structure on any additive category}. preprint, arXiv:1209.3423.

\bibitem{DK} T.~Dyckerhoff and M.~Kapranov, \emph{Higher Segal spaces}, Lecture Notes in Mathematics 2244 (2019).

\bibitem{DReitenSS} P. Dr\"axler, I. Reiten, S.O. Smal\o, {{\O}}. Solberg, with an appendix by B. Keller, \emph{Exact categories and vector space categories}, Trans. of Amer. Math. Soc, 351:2 (1999), 647--682.

\bibitem{EnochsJenda_v2} E. E. Enochs and O. M. G. Jenda, \emph{Relative homological algebra, Volume 2}, De Gruyter Expositions in Mathematics 54 (2011).

\bibitem{Enomoto1} H.~Enomoto, \emph{Classifications of exact structures and Cohen-Macaulay-finite algebras}, Adv. Math. 335 (2018), 838--877.

\bibitem{Enomoto2} H. Enomoto, \emph{Relations for Grothendieck groups and representation-finiteness}, J. Algebra 539 (2019), 152--176.

\bibitem{FFFM} X. Fang, E. Feigin, G. Fourier, I. Makhlin, \emph{Weighted PBW degenerations and tropical flag varieties}, Communications in Contemporary Mathematics, Vol. 21, No. 01, 1850016 (2019).

\bibitem{FFR1} X. Fang, G. Fourier, M. Reineke, \emph{PBW-type filtration on quantum groups of type $A_n$}, Journal of Algebra, Volume 449 (2016), 321--345.

\bibitem{FFR2} X. Fang, G. Fourier, M. Reineke, \emph{Cones from quantum groups to tropical flag varieties}, preprint, arXiv:1911.11828.

\bibitem{FFL} E. Feigin, G. Fourier, P. Littelmann, \emph{PBW filtration and bases for irreducible modules in type $\tt A_n$}, Transform. Groups 16 (1) (2011), 71--89.

\bibitem{Fiorot} L. Fiorot, \emph{N-Quasi-Abelian Categories vs N-Tilting Torsion Pairs}, preprint, arXiv:1602.08253.

\bibitem{GabrielRoiter} P. Gabriel, A. V. Roiter, \emph{Representations of Finite-dimensional Algebras}, in:
Algebra, VIII, Encyclopaedia Mathematical Sciences, vol. 73, Springer, Berlin, 1992 (with a chapter by B. Keller), 1--177.

\bibitem{Ge} W. Geigle, \emph{Grothendieck groups and exact sequences for hereditary Artin algebras}, J. London Math. Soc 31 (1985), 231--236.

\bibitem{GN} E. Gorsky, A. Negu\c{t}, \emph{Refined knot invariants and Hilbert schemes}, Journal de math\'ematiques pures et appliqu\'ees 104 (2015), 403--435.

\bibitem{Gr} J.A.~Green, \emph{Hall algebras, hereditary algebras and quantum groups}, Invent. Math. 120 (1995), 361--377.

\bibitem{Groth} A. Grothendieck, \emph{Sur quelques points d’alg\`ebre homologique}, T\^ohoku Math. J. (2) 9 (1957), 119--221.



\bibitem{Haiden} F. Haiden, \emph{Legendrian skein algebras and Hall algebras}, preprint, arXiv:1908.10358.

\bibitem{Hal} P.~Hall, \emph{The algebra of partitions}, in Proc. 4th Canad. Math. Congress
(Banff, 1957) (Univ. of Toronto Press, Toronto, 1959), 147--159.

\bibitem{Hochschild} G. Hochschild, \emph{Relative homological algebra}, Trans. Amer. Math. Soc. 82 (1956), 246--269.

\bibitem{Hu} A.~Hubery, \emph{From triangulated categories to Lie algebras: A theorem of Peng and Xiao}, Trends in representation theory of algebras and related topics, Contemp. Math. 406 (2006), 51--66.

\bibitem{HZZ} J. Hu, D. Zhang, P. Zhou, {\it Proper classes and Gorensteinness in extriangulated categories}, Journal of algebra, Volume 551 (2020), 23--60.

\bibitem{INP} O. Iyama, H. Nakaoka, Y. Palu, \emph{Auslander--Reiten theory in extriangulated categories}, preprint, arXiv:1805.03776.

\bibitem{KaledinLowen} D. Kaledin, W. Lowen, \emph{Cohomology of exact categories and (non-)additive sheaves}, Adv. Math., 272 (2015), 652--698.

\bibitem{KaprCoh} M.~Kapranov, \emph{Eisenstein series and quantum affine algebras}, Journal of Mathematical Sciences 84(5) (1997), 1311--1360. 

\bibitem{Kel1} B.~Keller, \emph{Chain complexes and stable categories}, Manus. Math. 67 (1990), 379--417.

\bibitem{KrauseVossieck} H. Krause, D. Vossieck,  \emph{Length categories of infinite height}, in Geometric and topological aspects of the representation theory of finite groups, Springer Proc. Math. Stat. 242, Springer, Cham, 2018, 213--234.

\bibitem{Lusztig90} G.~Lusztig, \emph{Canonical bases arising from quantized enveloping algebras}, J. Amer. Math. Soc. 3 (1990), 447--498.

\bibitem{Lusztig91} G.~Lusztig, \emph{Quivers, perverse sheaves and enveloping algebras}, J. Amer. Math. Soc. 4 (1991), 365--421.

\bibitem{Lusztig} G.~Lusztig, \emph{Introduction to quantum groups}. Reprint of the 1994 edition. Modern Birkh\"auser Classics. Birkh\"auser/Springer, New York, 2010.

\bibitem{MMP} E. N. Marcos, H. Merklen, M. I. Platzeck. \emph{The Grothendieck group of the category of modules
of finite projective dimension over certain weakly triangular algebras}. Communications in Algebra.
28(3):1387--1404, 2000.

\bibitem{MS} H. Morton, P. Samuelson, \emph{The HOMFLYPT skein algebra of the torus and the elliptic Hall algbera}, Duke Math. J.
Volume 166, Number 5 (2017), 801--854.

\bibitem{NakaokaPalu1} H. Nakaoka and Y. Palu, \emph{ Extriangulated categories, Hovey twin cotorsion pairs and model structures}, Cahiers de Topologie et Géométrie Différentielle Catégoriques, Volume LX-2 (2019).

\bibitem{NakaokaPalu2} H. Nakaoka and Y. Palu, \emph{External triangulation of the homotopy category of exact quasi-category}, preprint, arXiv:2004.02479.

\bibitem{Ogawa} Y. Ogawa, \emph{Auslander's defects over extriangulated categories: an application for the General Heart Construction}, preprint, arXiv:1911.00259.

\bibitem{Q} D.~Quillen, {\it Higher algebraic K-theory, I}, Springer Lecture Notes in Mathematics 341 (1973), 85--147.

\bibitem{R1} C.~M.~Ringel, \emph{Hall algebras and quantum groups}, Invent. Math. 101 (1990), 583--591.

\bibitem{Rump11} W.~Rump, \emph{On the maximal exact structure on an additive category}, Fund. Math. 214 (2011), no. 1, 77--87.

\bibitem{Sch} O.~Schiffmann, \emph{Lectures on Hall algebras}, chapter in {\it Geometric Methods in Representation Theory}, 
S\'{e}minaires et Congr\`{e}s 24-II (2012), 1--141.

\bibitem{SchII} O.~Schiffmann, \emph{Lectures on canonical and crystal bases of Hall algebras}, chapter in \emph{Geometric methods in representation theory} , 
S\'{e}minaires et Congr\`{e}s 24-II (2012) 143--259.


\bibitem{Sch18} O.~Schiffmann, \emph{Kac polynomials and Lie algebras associated to quivers and curves}, Proceedings of the International Congress of Mathematicians 2018 (2019), 1393--1424.

\bibitem{SV} O. Schiffmann, E. Vasserot. \emph{The elliptic Hall algebra, Cherednik Hecke algebras and Macdonald polynomials}, Compos. Math. 147 (2011), no. 1, 188--234.

\bibitem{Schneiders} J.-P. Schneiders, Quasi-abelian categories and sheaves, Mém. Soc. Math. Fr. (N.S.) 76 (1999) vi+134. MR1779315 (2001i:18023).

\bibitem{Solberg} {{\O}}. Solberg, \emph{Going relative with Maurice - A survey}, chapter in \emph{Surveys in Representation Theory of Algebras}, Contemporary Mathematics 716 (2018), 155-172.

\bibitem{St} E.~Steinitz, \emph{Zur Theorie der Abelschen Gruppen}, Jahresber. Deutsch. Math.-Verein. 9 (1901), 80--85.

\bibitem{Toen} B. To\"en, \emph{Derived Hall algebras}, Duke Math. J. 135 (2006), no. 3, 587--615.


\bibitem{XX1} J. Xiao and F. Xu, \emph{Hall algebras associated to triangulated categories}, Duke Math. J. 143 (2008), no. 2, 357--373.

\bibitem{XX2} J.~Xiao and F.~Xu, \emph{Remarks on Hall algebras of triangulated categories}, Kyoto J. Math. 55, no. 2 (2015), 477--499.

\bibitem{ZZ} B. Zhu and X. Zhuang, \emph{Grothendieck groups in extriangulated categories}, preprint, arXiv:1912.00621.

\end{thebibliography}
\end{document}